\theoremstyle{plain}
\newtheorem{thm}{Theorem}[section]
\newtheorem{prop}[thm]{Proposition}
\theoremstyle{definition}
\newtheorem{rem}[thm]{Remark}
\newtheorem{exmpl}[thm]{Example}
\newtheorem{cor}[thm]{Corollary}
\newtheorem{defn}[thm]{Definition}
\newtheorem{con}[thm]{Construction}
\newtheorem{note}[thm]{Notation}
\newif\ifXY 
\ifXY \xyoption{matrix} \xyoption{arrow} \xyoption{curve} \fi
\address{}
\email{tendlea@gmail.com}
\thanks{}
\newcommand{\form}[1]{(\ref{Eq:#1})}
\newcommand{\re}[1]{\ref{E:#1}}
\newcommand{\rs}[1]{Section \ref{S:#1}}
\newcommand{\rss}[1]{Subsection \ref{SS:#1}}
\begin{document}

\begin{titlepage}
\begin{center}

\begin{figure}
\includegraphics[width=0.15\textwidth]{./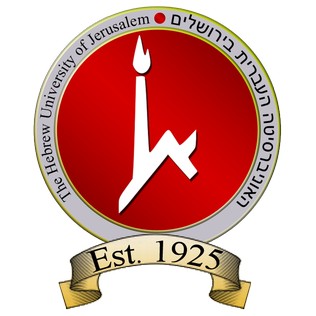}~\\[0.5cm]
\end{figure}

\textsc{\Large The Hebrew University of Jerusalem}\\[0.5cm]
\textsc{\Large Faculty of Science}\\[0.5cm]
\textsc{\Large Einstein Institute of Mathematics}\\[1.5cm]

\textsc{\large Master thesis on the subject of:}\\[1.5cm]

{ \huge \bfseries Geometric Class Field Theory \\[1.2cm] }


\noindent
\begin{minipage}[t]{0.4\textwidth}
\begin{flushleft} \large
\emph{Author:}\\
Avichai \textsc{Tendler}
\end{flushleft}
\end{minipage}%
\begin{minipage}[t]{0.4\textwidth}
\begin{flushright} \large
\emph{Supervisor:} \\
Prof.~Yakov \textsc{Varshavsky}
\end{flushright}
\end{minipage}

\vfill

{\large August 5, 2010}

\end{center}
\end{titlepage}

\title{Geometric class field theory}

\begin{abstract}
In this paper we prove global class field theory using a purely geometric result. We first write in detail Deligne's proof to the unramified case of class field theory, including defining the required objects for the proof. Then we generalize the notions appearing in the proof to prove also the tamely ramified case relying on the unramified one.
\end{abstract}

\maketitle

\section{Introduction}\label{S:INT}

In this paper, we assume familiarity with some standard notions from commutative algebra, algebraic geometry, algebraic number theory and the theory of \'etale morphisms, which can be found for example in \cite{AM}, \cite{Ha}, \cite{CF} and \cite{Mi} respectively.

Class field theory was developed in the period between 1850 to 1950. Its results describe the Galois groups of abelian extensions of global and local fields.

One type of global fields, which will be the only type we deal with in this work, are function fields over finite fields. These are field extensions of transcendence degree 1 over the finite field $\mathbb{F}_q$. Function fields enjoy the nice properties of global fields, in particular the Galois group of an abelian extension can be described in terms of the valuations of the field. But in addition, they have unique special advantages of their own, e.g., there are no Archimedean places in function fields. More importantly, function fields can be realized, as their name suggests, as fields of rational functions on algebraic curves.

The first proofs of class field theory didn't make use of these advantages of the function field case and are somewhat complicated. In the 50s, S. Lang, M. Rosenlicht and J. -P. Serre proved class field theory using algebraic geometry and the close relation between function fields and algebraic curves. We develop this connection in \rs{GPG} and \rs{EFG}. The old meaning of the term "geometric class field theory" is this theory of Lang and Rosenlicht, which can be found in \cite{Se1}.

Later, P. Deligne gave an elegant proof of unramified global class field theory over function fields using $\ell$-adic sheaves. In modern language, the notion "geometric class field theory" refers to working with sheaves rather than functions, and this is the language we use starting from \rs{GSFC}. In this modern viewpoint we get the result of class field theory as a corollary of the main theorem \re{MTRP}. A sketch of the proof of the main theorem, only in its unramified version, can be found also in \cite{La} (only the complex case) or in \cite{Fr1} (the general base field case).

In addition to its beauty, Deligne's proof has a more important advantage, instead of describing $Gal(K^{ab}/K)$, the Galois group of the maximal abelian extension of the field $K$, it describes characters of the Galois group of the maximal extension: $Gal(\bar{K}/K)\rightarrow\bar{\mathbb{Q}}^{\times}_{\ell}$. Although it is equivalent, characters can be generalized to representations in non-abelian groups, such as $Gl_n(\bar{\mathbb{Q}}_{\ell})$ or more generally, a reductive group $G$ (conjecturally), in order to obtain a non-abelian generalization of class field theory. This is the essence of the Langlands program, but we shall not deal with the non-abelian case here.

In this paper we first define the required notions and state the unramified and ramified versions of global class field theory (theorems \re{UCFT} and \re{RCFT} respectively). This is done in \rs{CGCFT}.

We develop the geometric objects which correspond to the ones occurring in class field theory. The most important objects here are the generalized Picard group (\rs{GPG}) and the \'etale fundamental group of a scheme (\rs{EFG}); these objects correspond to the adelic side and the Galois side in classical class field theory, respectively. These sections also explain how the classical and geometric objects are connected. In fact,  this is a restatement of the classical theorems of class field theory in the language of algebraic geometry.

In \rs{GSFC} we explain relations between sheaves and functions. The motivation here is to consider things from the modern viewpoint of algebraic geometry, which uses sheaves rather than functions. As we shall see, this viewpoint is more convenient to work with since standard operations on functions have analogous operations on sheaves, but there are additional ways to manipulate sheaves that don't exist for functions.

\rs{PFPCFT} finally starts to prove class field theory. Class field theory is a correspondence between adelic and Galois representations. In this section we prove the easier part of the correspondence, we construct from an adelic representation a Galois one, saving the converse construction for later. We prove here the unramified and tamely ramified cases.

\rs{TMT} gives the statement of the geometric result from which class field theory follows. There are three versions of the theorem: for the unramified case, the ramified case, and the ramified version in characteristic zero. The third has nothing to do with classical class field theory, but we state it for its own interest. In this section we also show why the geometric results imply class field theory.

\rs{PMTU} is devoted to a detailed explanation of the proof of the geometric theorem in its unramified case. While \rs{PMTR} gives the modifications of the proof in the ramified version, the characteristic zero case and the tamely ramified case follows relatively easily from the unramified case after all this preliminary work.

\section{classical global class field theory}\label{S:CGCFT}

In this section, we state some of the results of global class field theory in the function field case. Proofs will be given later using geometric techniques.

\begin{note}
We use some standard notations from class field theory: $K$ will be a function field over $\mathbb{F}_q$, $q=p^n$, $p\in\mathbb{Z}$ is a prime number and $\ell\neq p$ is another prime. We let $v$ or $\mathfrak{p}$ denote a valuation of $K$ . By abuse of language we will sometimes interchange between the words prime, valuation and place.  $K_v$ (or $K_{\mathfrak{p}}$) will be the completion of $K$ at $v$.
$\mathcal{O}_v$ (or $\mathcal{O}_{\mathfrak{p}}$) the ring of integers in $K_v$ and $\mathfrak{m}_v$ the corresponding maximal ideal
of $\mathcal{O}_v$.

$\bar{K}$ is the separable closure of $K$ and $G_K$ will denote $Gal(\bar{K}/K)$ the absolute Galois group of $K$.
\end{note}

\begin{defn}[Adele ring]
The {\em adele ring} of $K$ is the restricted topological product $\mathbb{A}_K:=\underset{v}{\prod}'K_v$ with respect to
the open subrings $\mathcal{O}_v$, the product is taken over all the primes of $K$.

According to the definition of restricted topological product, the group of invertible adeles is
\begin{center}
$\mathbb{A}_K^{\times}=\{{(a_v)}_v\in \underset{v}{\prod} K_v^{\times}: a_v\in \mathcal{O}_v^{\times}\, for \, almost \, all \, v\}$
\end{center}
There exists a subring $\mathbb{O}_K=\underset{v}{\prod}\mathcal{O}_v$ of $\mathbb{A}_K$.
\end{defn}

There exists also an inclusion $i:K\hookrightarrow\mathbb{A}_K$ defined by $a\mapsto{(a)}_v$ which has a discrete image.
By abuse of notation we denote the image $i(K)$ by $K$.

The following definition will help us to establish a connection between extensions of a global field and its localizations:

\begin{defn}
\begin{enumerate}
\item
Let $K'/K$ be a finite Galois extension with Galois group $G_{K'/K}$, $\mathfrak{p}$ a prime ideal of $K$ and $\mathfrak{P}$ is a prime over $\mathfrak{p}$ in $K'$.

The {\em decomposition group} is defined to be:
\begin{center}
$D(\mathfrak{P})=\{\sigma\in G_{K'/K}: \sigma \mathfrak{P}=\mathfrak{P}\}$
\end{center}

\item
A {\em compatible set of primes} is a choice of a prime $\mathfrak{P}'$ over $\mathfrak{p}$ for any finite Galois extension $K'\supseteq K$. Such that if $K''\supseteq K' \supseteq K$, and $\mathfrak{P}'',\mathfrak{P}'$ are the corresponding primes then $\mathfrak{P}''$ is over $\mathfrak{P}'$. In this case, for such extensions we have a map $D(\mathfrak{P}'')\rightarrow D(\mathfrak{P}')$.

\item
For a compatible system primes $\mathfrak{P}$ over $\mathfrak{p}$, the inverse limit of the decomposition groups  $D(\underset{K\subseteq K'}{\varprojlim}\mathfrak{P}')$ can be defined.
\end{enumerate}
\end{defn}

\begin{rem}\label{E:Inertia}\
\begin{enumerate}
\item
Since the elements of $D(\mathfrak{P})$ acts continuously with respect to the $\mathfrak{P}$-adic topology, we obtain a map $i_1:D(\mathfrak{P})\xrightarrow{\approx} Gal(K'_{\mathfrak{P}}/K_{\mathfrak{p}})$, this map is an isomorphism. Also, $D(\underset{K\subseteq K'}{\varprojlim}\mathfrak{P}')$ is a subgroup of $G_K$ isomorphic to $G_{K_{\mathfrak{p}}}$.
\item
There exists a surjective map $i_2:Gal(K'_{\mathfrak{P}}/K_{\mathfrak{p}})\twoheadrightarrow Gal(\mathbb{F}_{q'}/\mathbb{F}_q)$ here, $\mathbb{F}_{q'}, \mathbb{F}_q$ are the residue fields of $K',K$ respectively. The kernel of the map $i_2\circ i_1$ is a normal subgroup of the decomposition group $D(\mathfrak{P})$, it is called the {\em inertia group} and it is denoted $I(\mathfrak{P})$.
\begin{center}
$1\rightarrow I(\mathfrak{P}) \rightarrow D(\mathfrak{P}) \rightarrow Gal(\mathbb{F}_{q'}/\mathbb{F}_q)\rightarrow 1$
\end{center}
\item
Note that if $\mathfrak{P}, \mathfrak{P}'$ are different primes of $K'$ over $\mathfrak{p}$, then $D(\mathfrak{P})$ and $D(\mathfrak{P}')$ are conjugate subgroups of $G_{K'/K}$. Hence a character of $G_{K'/K}$ vanishes on one iff it vanishes on the other, those the notion of vanishing on the decomposition group (or some specific subgroup of it) is independent of the choice of $\mathfrak{P}$.
\item
A similar result is true for the inverse limits: the decomposition groups for different choices of compatible system of primes $D(\underset{K\subseteq K'}{\varprojlim}\mathfrak{P})$ and $D(\underset{K\subseteq K'}{\varprojlim}\mathfrak{P}')$ are conjugate in $G_K$. A similar result applies to the inertia group.
\end{enumerate}
\end{rem}

According to the last remark, the following is well defined:

\begin{defn}
Let $\rho:G_K\rightarrow\bar{\mathbb{Q}}_{\ell}^{\times}$ be a character.
\begin{enumerate}
\item
$\rho$ is {\em unramified at $\mathfrak{p}$} if it is vanishes on the inertia group $I(\underset{K\subseteq K'}{\varprojlim}\mathfrak{P}')$ for some (hence any) compatible set of primes over $\mathfrak{p}$.
\item
$\rho$ is {\em unramified} if it is unramified at any $\mathfrak{p}$.
\end{enumerate}
\end{defn}

We shall now define special elements of the Galois group and the invertible adele group, that will correspond to one another under global class field theory.

\begin{defn}
\begin{enumerate}
Assume $\mathfrak{p}$ is a prime of $K$.

\item
Let $\pi_{\mathfrak{p}}=(a_{\mathfrak{p}'})_{\mathfrak{p}'}\in\mathbb{A}_K{\times}$ denote an element such that $a_{\mathfrak{p}'}=1$ for $\mathfrak{p}'\neq \mathfrak{p}$ and $a_\mathfrak{p}=\pi_\mathfrak{p}$ a uniformizer, i.e. a
generator of $\mathfrak{m}_{\mathfrak{p}}$.

\item
In the notation of the above definition, since $\mathbb{F}_{q'}/\mathbb{F}_q$ is an extension of finite fields, there exists the Frobenius automorphism $F\in Gal(\mathbb{F}_{q'}/\mathbb{F}_q)$, defined by $F(x)=x^q$.
Recall the exact sequence from remark \re{Inertia} (2), the inverse image $Fr_{\mathfrak{P}/\mathfrak{p}}$ of $F$ in $D(\mathfrak{P})$ is defined up to an element of the inertia subgroup $I(\mathfrak{P})$.

Note that the different decomposition groups for different primes $\mathfrak{P}$ are conjugate to one another in $Gal(K'/K)$. Therefore, $Fr_{\mathfrak{P}/\mathfrak{p}}$ defines up to inertia and conjugacy an element of $Gal(K'/K)$,  which is the {\em Frobenius element}.

\item
Taking the inverse limit of the Frobenius elements in a compatible set of primes, we obtain the Frobenius element $Fr_{\mathfrak{p}}\in G_K$, which is also defined up to conjugacy and inertia.
\end{enumerate}
\end{defn}

\begin{rem}
Note that if a character $\rho:G_K\rightarrow\bar{\mathbb{Q}}_{\ell}^{\times}$ is unramified at $\mathfrak{p}$, then it vanishes on the inertia $I(\underset{K\subseteq K'}{\varprojlim}\mathfrak{P}')$. Since $\bar{\mathbb{Q}}_{\ell}^{\times}$ is abelian, the value of $\rho$ is constant on the conjugacy class. Combining this two facts we get that in this case $\rho(Fr_{\mathfrak{p}})$ has well defined value.
\end{rem}

We are finally ready to state the unramified version of global class field theory over function fields:

\begin{thm}[Unramified class field theory]\label{E:UCFT}
In the above notations:
\begin{enumerate}
\item
For each character $\xi:K^{\times}\backslash\mathbb{A}_K^{\times}/\mathbb{O}_K^{\times} \rightarrow\bar{\mathbb{Q}}_{\ell}^{\times}$ there exists a
unique continuous unramified character $\rho:G_K\rightarrow\bar{\mathbb{Q}}_{\ell}^{\times}$ such that  $\rho(Fr_v)=\xi(\pi_v)$ for all $v$.

\item
For each continuous unramified character $\rho:G_K\rightarrow\bar{\mathbb{Q}}_{\ell}^{\times}$ there exists a unique character
$\xi:K^{\times}\backslash\mathbb{A}_K^{\times}/\mathbb{O}_K^{\times} \rightarrow\bar{\mathbb{Q}}_{\ell}^{\times}$
such that  $\rho(Fr_v)=\xi(\pi_v)$ for all $v$.
\end{enumerate}
\end{thm}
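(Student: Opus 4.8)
The plan is to translate both sides of the correspondence into algebraic geometry and then feed them into the main geometric theorem \re{MTRP}. Let $X$ be the smooth projective geometrically connected curve over $\mathbb{F}_q$ with $\mathbb{F}_q(X)=K$, whose closed points are the places $v$. First I would note that the valuation map $(a_v)_v\mapsto\sum_v v(a_v)\,[v]$ identifies $\mathbb{A}_K^{\times}/\mathbb{O}_K^{\times}$ with the divisor group $\mathrm{Div}(X)$ and sends $\pi_v$ to the prime divisor $[v]$, so that quotienting by $K^{\times}$ identifies $K^{\times}\backslash\mathbb{A}_K^{\times}/\mathbb{O}_K^{\times}$ with $\mathrm{Pic}(X)$ and a character $\xi$ of the left-hand group is nothing but a character of $\mathrm{Pic}(X)$ with $\xi(\pi_v)=\xi([v])$. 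On the Galois side I would observe that a continuous character $\rho$ of $G_K$ is unramified at every $v$ precisely when it factors through $\pi_1(X,\bar\eta)^{\mathrm{ab}}$, and that under this factorization $\rho(Fr_v)$ is the value at the Frobenius class of the closed point $v$. Thus Theorem \re{UCFT} becomes: the reciprocity map $\mathrm{Pic}(X)\to\pi_1(X)^{\mathrm{ab}}$, $[v]\mapsto Fr_v$, induces a bijection on $\bar{\mathbb{Q}}_\ell^{\times}$-valued characters.

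Next I would reduce this to the main geometric theorem. Via the Abel–Jacobi morphism from (the symmetric powers of) $X$ to $\mathrm{Pic}_X$, pullback of rank-one $\bar{\mathbb{Q}}_\ell$-local systems is, by \re{MTRP}, an equivalence between the multiplicative (character) rank-one local systems on $\mathrm{Pic}_X$ and all rank-one local systems on $X$. Passing from sheaves to functions by taking traces of Frobenius, as in the sheaf–function dictionary of \rs{GSFC}, a character sheaf on $\mathrm{Pic}_X$ produces a character of $\mathrm{Pic}(X)=\mathrm{Pic}_X(\mathbb{F}_q)$, and its Abel–Jacobi pullback produces a character of $\pi_1(X)^{\mathrm{ab}}$ whose value on $Fr_v$ equals the value of the original character sheaf on the class $[v]$. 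This is exactly what matches $\xi$ with $\rho$, in both directions, once the main theorem is available.

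With that in hand the two constructions run as follows. Given $\xi$, I would use the exact sequence $0\to\mathrm{Pic}^0(X)\to\mathrm{Pic}(X)\xrightarrow{\deg}\mathbb{Z}\to0$ with $\mathrm{Pic}^0(X)=J_X(\mathbb{F}_q)$ finite and $\deg$ surjective to realize $\xi$ by a rank-one character local system on $\mathrm{Pic}_X$ (a Kummer/Lang-torsor sheaf on the finite part together with the canonical choice along the degree component), pull it back along Abel–Jacobi, and define $\rho$ as its trace-of-Frobenius function; this is a continuous unramified character of $G_K$ with $\rho(Fr_v)=\xi(\pi_v)$. Conversely, given an unramified $\rho$, I would take the associated rank-one local system on $X$, descend it by \re{MTRP} to a character sheaf on $\mathrm{Pic}_X$, and let $\xi$ be its Frobenius-trace character; it satisfies the same compatibility. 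Uniqueness in both directions I would get from Chebotarev density: the classes $[v]$ generate $\mathrm{Pic}(X)$ and the elements $Fr_v$ are dense in $G_K^{\mathrm{ab}}$, so each of $\xi$ and $\rho$ is pinned down by the common values $\xi(\pi_v)=\rho(Fr_v)$.

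The hard part will be the main geometric theorem itself, not this reduction: constructing the character sheaf on the Picard scheme, proving that Abel–Jacobi pullback is fully faithful and essentially surjective onto rank-one local systems on $X$, and checking the Frobenius-compatibility of all the identifications — including the mild mismatch between $\mathbb{Z}$ and $\hat{\mathbb{Z}}$ along the degree map (Weil group versus Galois group) and the continuity of the resulting $\ell$-adic characters. That analysis is precisely the content of Sections \rs{TMT}–\rs{PMTU}, and everything in the paragraphs above is formal once the dictionaries of \rs{GPG}, \rs{EFG} and \rs{GSFC} are set up.
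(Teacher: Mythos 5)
Your proposal follows essentially the same route as the paper: identify $K^{\times}\backslash\mathbb{A}_K^{\times}/\mathbb{O}_K^{\times}$ with $Pic(X)(\mathbb{F}_q)$ and unramified characters of $G_K$ with characters of $\pi_1(X)$, construct $\rho$ from $\xi$ via a Lang-torsor character sheaf on the Picard scheme pulled back along the Abel--Jacobi map, construct $\xi$ from $\rho$ by descending the rank-one local system through the main theorem \re{MTRP} and taking Frobenius traces, and obtain uniqueness from Chebotarev density together with generation of the idele class group by the $\pi_v$. This matches the paper's proof (uniqueness after \re{RCFT}, existence of $\rho$ in \rs{PFPCFT}, existence of $\xi$ in \rs{TMT}), with the deferred Weil-structure and multiplicativity checks being exactly the ones the paper carries out there.
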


To deal with ramified extensions, we must first define a notion that quantify the ramification of an extension at some prime:

\begin{defn} [Higher ramification groups]
\begin{enumerate}
See also \cite{Se3} for this definition and properties.

\item
Let $L$ be complete with respect to a nontrivial discrete valuation. $L'$ a finite Galois extension of $L$ with Galois group $G_{L'/L}$. Then the
{\em u lower numbered ramification group}, $u\geq-1$ of this extension is $G_{L'/L,u}=\{\sigma\in G_{L'/L}:\forall a\in \mathcal{O}_{L'},
\space v_L(\sigma(a)- a)\geq u+1 \}$.

\item
In the same situation define the Herbrand function $\phi_{L'/L}:[-1,\infty)\rightarrow[-1,\infty)$ by:
\begin{center}
$   \phi_{L'/L}(u)= \left\{
     \begin{array}{lr}
       \int_0^u\frac{dt}{[G_{L'/L,0}:G_{L'/L,t}]} &  0\leq u\\
       u &  -1\leq u\leq 0
     \end{array}
   \right.$
\end{center}
this function is clearly increasing (and piecewise linear), therefore it has an inverse.

\item
Define the {\em v upper numbered ramification group} by $G^v_{L'/L}:=G_{L'/L,u}$ where $v=\phi_{L'/L}(u)$.

\item
If $L''\supseteq L'\supseteq L$ are finite Galois extension of complete nontrivial DVR. Then the quotient map
$G_{L''/L}\rightarrow G_{L'/L}$ induces maps $G^i_{L''/L}\rightarrow G^i_{L'/L}$ (see \cite{Se3}). So we define the {\em higher ramification
groups} of $L$ by $G_L^i=\underset{L\subseteq L'}{\varprojlim} G^i_{L'/L}$.
\end{enumerate}
\end{defn}

\begin{rem}\label{E:inertia}
Note that $G_{L'/L}^0$ is precisely the inertia group $I(\mathfrak{P})$ defined above, where $L'$ is the completion of $K'$ at the prime $\mathfrak{P}$.
\end{rem}

\begin{defn}
\begin{enumerate}
\item
A {\em $p$ group} is a finite group of order a power of $p$.
\item
A {\em pro-$p$ group} is an inverse limit of $p$-groups.

\end{enumerate}
\end{defn}

\begin{prop}\label{E:PHRG}
With the notations above, the group $G^0_{L'/L}/G^1_{L'/L}$ is isomorphic to a subgroup of $\mathbb{F}_q^{\times}$ hence it is a finite group of order prime to $p$.
$G^i_{L'/L}$ for $i\geq1$ are $p$-groups.
\end{prop}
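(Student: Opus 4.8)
The plan is to derive both assertions from the classical description of the successive quotients of the ramification filtration in terms of the unit filtration of $L'$, following Serre, \emph{Local Fields}, Ch.~IV (see also \cite{Se3}). Write $U^{(0)}=\mathcal{O}_{L'}^{\times}$ and $U^{(n)}=1+\mathfrak{m}_{L'}^{n}$ for $n\geq1$, and let $k$ denote the (finite) residue field of $L'$. I would first reduce to the case in which $L'/L$ is totally ramified: if $L_{0}$ is the maximal unramified subextension of $L'/L$, then $\mathrm{Gal}(L'/L_{0})=G^{0}_{L'/L}$ is the inertia group, passing from $\mathrm{Gal}(L'/L)$ to this subgroup leaves the lower ramification groups in degrees $\geq0$ unchanged, and since $\phi_{L_{0}/L}$ is the identity, the transitivity of $\phi$ shows that $G^{0}_{L'/L}$ and $G^{1}_{L'/L}$ are unchanged as well. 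Hence we may assume $\mathcal{O}_{L'}=\mathcal{O}_{L}[\pi]$ for a uniformizer $\pi$ of $L'$.

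The key tool is, for each $i\geq0$, the map $\theta_{i}\colon G_{L'/L,i}\to U^{(i)}/U^{(i+1)}$ sending $\sigma$ to the class of $\sigma(\pi)/\pi$. The verification proceeds in steps: (i) $\sigma(\pi)/\pi\in U^{(i)}$, immediate from $v_{L'}(\sigma(\pi)-\pi)\geq i+1$; (ii) for a unit $u$ and $\sigma\in G_{L'/L,i}$ one has $\sigma(u)/u\in U^{(i+1)}$, so $\theta_{i}$ is independent of the choice of uniformizer; (iii) $\theta_{i}$ is a homomorphism, obtained by expanding $\sigma\tau(\pi)/\pi=(\sigma(\pi)/\pi)(\tau(\pi)/\pi)(\sigma(w)/w)$ with $w=\tau(\pi)/\pi$ and discarding the last factor modulo $U^{(i+1)}$ using (ii); (iv) $\ker\theta_{i}=G_{L'/L,i+1}$, where one uses $\mathcal{O}_{L'}=\mathcal{O}_{L}[\pi]$ to pass from the estimate on $\sigma(\pi)-\pi$ to the estimate on $\sigma(a)-a$ for all $a\in\mathcal{O}_{L'}$. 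This yields injections $G_{L'/L,i}/G_{L'/L,i+1}\hookrightarrow U^{(i)}/U^{(i+1)}$.

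Next I would identify the graded pieces of the unit filtration: reduction modulo $\mathfrak{m}_{L'}$ gives $U^{(0)}/U^{(1)}\cong k^{\times}$, while $1+x\pi^{n}\mapsto\bar{x}$ gives $U^{(n)}/U^{(n+1)}\cong(k,+)$ for $n\geq1$. Consequently $G_{L'/L,0}/G_{L'/L,1}$ embeds in the cyclic group $k^{\times}$, hence is finite of order dividing $|k|-1$, in particular prime to $p$; and for each $n\geq1$ the quotient $G_{L'/L,n}/G_{L'/L,n+1}$ embeds in the elementary abelian $p$-group $(k,+)$, so the finite group $G_{L'/L,1}$ (the filtration terminates because $L'/L$ is finite) is a $p$-group.

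Finally I translate into the upper numbering. From $\phi_{L'/L}(0)=0$ we get $G^{0}_{L'/L}=G_{L'/L,0}$, and from $\phi_{L'/L}(u)\leq u$ for $u\geq0$ we get $\phi_{L'/L}^{-1}(i)\geq i$, so for every $i\geq1$ the group $G^{i}_{L'/L}=G_{L'/L,\phi^{-1}(i)}$ is contained in the $p$-group $G_{L'/L,1}$ and is therefore a $p$-group; this is the second assertion. For the first assertion one needs the equality $G^{1}_{L'/L}=G_{L'/L,1}$, and matching the two numberings at this level is the step I expect to be the crux: it can fail for general $L'/L$. In the setting relevant to class field theory, namely when $L'/L$ is abelian, the Hasse--Arf theorem forces the jumps of the upper-numbering filtration to occur at integers, so the jump from $G_{L'/L,0}$ to $G_{L'/L,1}$ sits at $0$ in the upper numbering and $G^{v}_{L'/L}=G_{L'/L,1}$ for all $0<v\leq1$; then $G^{0}_{L'/L}/G^{1}_{L'/L}=G_{L'/L,0}/G_{L'/L,1}\hookrightarrow k^{\times}$ is finite cyclic of order prime to $p$ (the ambient group being, more precisely, the multiplicative group of the residue field of $L'$, which gives the asserted conclusion). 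Passing to the inverse limit $G_{L}^{i}=\varprojlim_{L\subseteq L'}G^{i}_{L'/L}$ yields the corresponding statements for the higher ramification groups of $L$.
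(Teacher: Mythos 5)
Your proof is correct and, in substance, it is the paper's proof: the paper handles this proposition by citing Serre, \emph{Local Fields}, Chapter IV, \S 2, Corollaries 1 and 3, and your maps $\theta_i\colon G_{L'/L,i}\to U^{(i)}/U^{(i+1)}$ together with the identifications $U^{(0)}/U^{(1)}\cong k^{\times}$ and $U^{(n)}/U^{(n+1)}\cong (k,+)$ are exactly the argument behind that citation. Where you go beyond the citation is the passage from lower to upper numbering, and your caution there is justified: Serre's corollaries concern $G_{L'/L,0}/G_{L'/L,1}$ and $G_{L'/L,1}$; the inclusion $G^{i}_{L'/L}\subseteq G_{L'/L,1}$ for $i\geq1$ does give the second assertion in full generality, but the equality $G^{1}_{L'/L}=G_{L'/L,1}$ needed for the first assertion can genuinely fail for non-abelian $L'/L$ --- for instance, an $S_3$-extension of $\mathbb{F}_3((t))$ whose wild part has lower-numbering break $1$ has $G^{1}_{L'/L}=1$, so that $G^{0}_{L'/L}/G^{1}_{L'/L}\cong S_3$ has order divisible by $p$. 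Thus the proposition as literally stated is correct only after restricting to abelian $L'/L$ (your Hasse--Arf step) or after reading it in lower numbering; this restriction covers everything the paper actually uses, since only characters of $G_K$, which factor through abelian quotients, ever occur. Your parenthetical point that the ambient group is the multiplicative group of the residue field of $L'$ rather than $\mathbb{F}_q^{\times}$ is likewise a correct refinement, and it does not affect the conclusion that the quotient is finite of order prime to $p$.
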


\begin{proof}
\cite[Chapter IV, $\S2$, Corollaries 1,3]{Se3}.
\end{proof}

With this definition in hand, we can finally measure the ramification of field extensions:

\begin{defn}[Ramification]
Let $\rho:G_K \rightarrow \bar{\mathbb{Q}}_{\ell}^{\times}$ be a character of $G_K$ in the field of $\ell$-adic numbers (or any other field) and let $\mathfrak{p}$ be a prime of $K$. Recall that the decomposition group $D(\mathfrak{p})$ is defined up to conjugacy in $G_K$ and it is isomorphic to $G_{K_{\mathfrak{p}}}$ (it will sometimes be denoted by $G_{\mathfrak{p}}$).

\begin{enumerate}
\item
We say that $ram_{\mathfrak{p}}(\rho)\leq i$ if $\rho$ vanishes on the higher ramification group $G^i_{\mathfrak{p}}$.

\item
Assume $[D]=\sum n_i\mathfrak{p}_i$ is an effective divisor on $K$ (i.e. a finite formal sum of its primes with $n_i\geq0$), we say that
$ram(\rho)\leq [D]$ if for all $i$ $ram_{\mathfrak{p}_i}(\rho)\leq n_i$.

\item
We say that $\rho$ is {\em tamely ramified}, if its ramification at any prime $\mathfrak{p}$ bounded by 1. Otherwise, it is {\em wildly ramified}.

\end{enumerate}
\end{defn}

\begin{rem}
We already defined unramified characters, note that $\rho$ is unramified iff its ramification is bounded by 0 at any prime $\mathfrak{p}$, this is true because of remark \re{inertia}.
\end{rem}

We need the following notation in order to state the ramified version of theorem \re{UCFT}:

\begin{defn}
Let $[D]=\sum n_iv_i$ be an effective divisor on $K$.

\begin{enumerate}
\item
The {\em support} of $[D]$ is the set of valuations that occur with nonzero coefficient in $[D]$, it is denoted $Supp(D)$.

\item
Define $U_{n_i}=\{x\in\mathcal{O}_{v_i}^{\times}:x\equiv1 \mod \mathfrak{m}_{v_i}^{n_i}\}$.

\item
$\mathbb{O}_D^{\times}$ will denote the product $\underset{v_i\in Supp(D)}{\prod}U_{n_i}\times\underset{v\notin Supp(D)}{\prod}\mathcal{O}_v^{\times}$.
\end{enumerate}
\end{defn}

Now we are ready to generalize the theorem \re{UCFT} to the ramified case:

\begin{thm}[Ramified class field theory]\label{E:RCFT}
In the above notations:
\begin{enumerate}
\item
For each character $\xi:K^{\times}\backslash\mathbb{A}_K^{\times}/\mathbb{O}_D^{\times} \rightarrow\bar{\mathbb{Q}}_{\ell}^{\times}$ there exists a
unique continuous character $\rho:G_K\rightarrow\bar{\mathbb{Q}}_{\ell}^{\times}$ with $ram(\rho)\leq [D]$ and $\rho(Fr_v)=\xi(\pi_v)$ for all primes $v\notin Supp(D)$.

\item
For each continuous character $\rho:G_K\rightarrow\bar{\mathbb{Q}}_{\ell}^{\times}$ with $ram(\rho)\leq [D]$ there exists a unique character
$\xi:K^{\times}\backslash\mathbb{A}_K^{\times}/\mathbb{O}_D^{\times} \rightarrow\bar{\mathbb{Q}}_{\ell}^{\times}$
such that  $\rho(Fr_v)=\xi(\pi_v)$ for all primes $v\notin Supp(D)$.
\end{enumerate}
\end{thm}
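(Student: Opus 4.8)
The plan, following Lang, Rosenlicht, Serre and Deligne, is to translate both sides of the correspondence into geometry on the smooth projective curve $X$ over $\mathbb{F}_q$ with function field $K$, and then deduce the ramified statement (in the detail carried out in this thesis, its tamely ramified part) from the unramified Theorem \re{UCFT}. Set $U:=X\setminus\mathrm{Supp}(D)$. On the adelic side, the theory of generalized Picard groups developed in \rs{GPG} identifies $K^{\times}\backslash\mathbb{A}_K^{\times}/\mathbb{O}_D^{\times}$ with a group built from the $\mathbb{F}_q$-points of the generalized Jacobian $\mathrm{Pic}^D_X$ attached to the modulus $D$; concretely, for $v\notin\mathrm{Supp}(D)$ the class of $\pi_v$ corresponds to the class of the divisor $[v]$. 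On the Galois side, by the definition of the higher ramification groups and \rs{EFG}, a continuous character $\rho\colon G_K\to\bar{\mathbb{Q}}_{\ell}^{\times}$ with $ram(\rho)\le[D]$ is the same as a continuous character of $\pi_1^{\mathrm{ab}}(U)$ whose local monodromy at each $v_i$ is bounded by $n_i$. Thus Theorem \re{RCFT} becomes the assertion that the map on characters induced by the Abel map $U\to\mathrm{Pic}^D_X$, $x\mapsto[x]$, is a bijection.

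The first direction (producing $\rho$ from $\xi$) is the easy half, carried out in \rs{PFPCFT}: using the sheaf--function dictionary of \rs{GSFC}, one realizes $\xi$ as the Frobenius-trace function of a rank-one $\bar{\mathbb{Q}}_{\ell}$ character sheaf $\mathcal{L}_\xi$ on $\mathrm{Pic}^D_X$, pulls $\mathcal{L}_\xi$ back along the Abel map to a rank-one lisse sheaf on $U$, hence a continuous character $\rho$ of $\pi_1^{\mathrm{ab}}(U)$, i.e.\ of $G_K$ unramified outside $\mathrm{Supp}(D)$; the identity $\rho(Fr_v)=\xi(\pi_v)$ for $v\notin\mathrm{Supp}(D)$ is immediate from the fact that the Abel map carries $Fr_v$ to the class of $[v]$. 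The bound $ram(\rho)\le[D]$ must then be read off from the construction of $\mathrm{Pic}^D_X$: the modulus $D$ is built in precisely so that characters of $\mathrm{Pic}^D_X(\mathbb{F}_q)$ pull back to characters of $\pi_1(U)$ of ramification $\le[D]$, which at each $v_i$ is a local matching of the filtration of $\mathcal{O}_{v_i}^{\times}$ by the subgroups $U_{n_i}$ against the upper-numbered filtration $G^i_{K_{v_i}}$. In the tamely ramified case ($n_i\le 1$) this is elementary: then $\mathrm{Pic}^D_X$ is a torus extension of $\mathrm{Pic}_X$ with torus part a quotient of $\prod_{v_i}k(v_i)^{\times}$, of order prime to $p$ by Proposition \re{PHRG}, and one checks directly that characters trivial on the torus part are exactly the unramified ones of Theorem \re{UCFT}, while the torus part contributes ramification at $v_i$ bounded by $1$.

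For uniqueness in both parts one invokes the Chebotarev density theorem for $\pi_1^{\mathrm{ab}}(U)$: the Frobenius classes $Fr_v$, $v\notin\mathrm{Supp}(D)$, are dense, so a continuous character of bounded ramification is determined by the values $\xi(\pi_v)$; this simultaneously gives injectivity of the ``geometric $\Rightarrow$ arithmetic'' map on characters. The remaining content is the surjectivity needed for part (2): every continuous $\rho$ with $ram(\rho)\le[D]$ comes from some $\xi$. One reduces $\rho$ to a character of a finite abelian quotient of $\pi_1(U)$ of bounded ramification and appeals to the main geometric theorem \re{MTRP} (proved in \rs{PMTU} in the unramified case and extended in \rs{PMTR}), which asserts that the Abel map induces an equivalence between rank-one local systems of bounded ramification on $U$ and character sheaves on $\mathrm{Pic}^D_X$; dually this is exactly the surjectivity required.

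The main obstacle is the ramification bookkeeping at the finitely many primes of $\mathrm{Supp}(D)$: showing both that pulling a character sheaf on $\mathrm{Pic}^D_X$ back to $U$ yields ramification $\le[D]$ and, conversely, that every such $\rho$ is captured by $\mathrm{Pic}^D_X$. This is a local monodromy computation comparing $G^i_{K_{v_i}}$ with the congruence filtration by the $U_{n_i}$ --- a geometric incarnation of the Hasse--Arf compatibility of local class field theory with the ramification filtrations. In the tame case, where the relevant local group is cyclic of order prime to $p$ by Proposition \re{PHRG}, this is short and is what \rs{PMTR} establishes; the general wild case is the genuinely delicate point, which is why the thesis confines its detailed treatment of the ramified theorem to the tamely ramified setting.
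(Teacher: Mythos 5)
Your overall route is the paper's route: on the easy side you build a character sheaf on $Pic_D$ out of $\xi$ (in the paper this is done via the Lang isogeny, \re{TLI}), pull it back along $\phi:X-D\rightarrow Pic_D$, and bound the ramification in the tame case by the prime-to-$p$ trick --- $\xi^{q-1}$ is trivial on the kernel $R_D/\mathbb{G}_m$ of $Pic_D\rightarrow Pic$ (proposition \re{FPIC}), so $\rho^{q-1}$ is unramified, and since the groups $G^j$, $j\geq1$, are pro-$p$ by proposition \re{PHRG} and $q-1$ is prime to $p$, $\rho$ itself vanishes on them; on the hard side you invoke the geometric theorem \re{MTRP} and read $\xi$ off by the sheaf--function dictionary, exactly as in \rs{TMT}. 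Two steps you pass over in silence do need to be said when writing this up: the eigensheaf furnished by \re{MTRP} lives over $\bar{\mathbb{F}}_q$ and must first be equipped with a Weil structure, which the paper extracts from the uniqueness clause of \re{MTRP}; and the multiplicativity of $\xi=f^{E_{\mathcal{F}}}$ is not automatic but comes from the stronger property \re{GHEP}.

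There is, however, one genuine gap: you assert that Chebotarev handles uniqueness ``in both parts.'' Chebotarev gives density of the Frobenius conjugacy classes in $G_K$ and hence uniqueness of $\rho$ in part (1); it says nothing about uniqueness of $\xi$ in part (2). For that you must show that a character of $K^{\times}\backslash\mathbb{A}_K^{\times}/\mathbb{O}_D^{\times}$ is determined by its values on the classes of the $\pi_v$ with $v\notin Supp(D)$, i.e.\ that these classes generate the group. This is an adelic statement, not a Galois-theoretic one, and the paper proves it by weak approximation: given an idele $x$, choose $y\in K^{\times}$ with $y\equiv x \bmod \mathfrak{m}_{v_i}^{n_i}$ at every $v_i\in Supp(D)$, replace $x$ by $x/y$ so that its components at $Supp(D)$ lie in $U_{n_i}$, and observe that the result is equivalent in $K^{\times}\backslash\mathbb{A}_K^{\times}/\mathbb{O}_D^{\times}$ to the finite product $\prod_{v\notin Supp(D)}\pi_v^{val_v(x)}$. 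Without this generation argument the uniqueness claim of part (2) is unproved; with it, your proposal matches the paper's proof.
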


The following theorem will be useful immediately:

\begin{thm}[Chebotarev's density theorem]
Let $K'/K$ be a finite extension, $C$ a conjugacy class of $Gal(K'/K)$. The set of primes $v$ unramified in $K'$ such that the conjugacy class of $Fr_v$ is in $C$ has density $\#C/\#Gal(K'/K)$.
\end{thm}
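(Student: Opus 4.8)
The argument is classical, so I only sketch it. First I would reduce to the case that $K'/K$ is Galois (replacing $K'$ by its Galois closure), write $G=Gal(K'/K)$, fix $\sigma\in G$ with conjugacy class $C$, and aim to show $\delta(P_C)=|C|/|G|$ where $P_C=\{v:\ Fr_v\subseteq C\}$.

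\emph{Reduction to cyclic extensions.} Set $H=\langle\sigma\rangle$, $n=|H|$, $L=(K')^{H}$, so $K'/L$ is cyclic with group $H$. A decomposition-group computation shows that for $v$ unramified in $K'$ the primes $\mathfrak P$ of $K'$ above $v$ with $Fr_{\mathfrak P/v}=\sigma$ (exactly, not just up to conjugacy) are in bijection with $C_G(\sigma)/H$, and that for each such $\mathfrak P$ the prime $w=\mathfrak P\cap L$ has residue degree $1$ over $v$ and satisfies $Fr_w=\sigma$ in $Gal(K'/L)=H$; conversely, every prime $w$ of $L$ with $Fr_w=\sigma$ and residue degree $1$ over $K$ arises from a unique $v\in P_C$. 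Since the primes of $L$ of residue degree $\geq 2$ over $K$ contribute only $O(1)$ to the relevant Dirichlet series, this reduces the theorem to the cyclic assertion that $\{w\ \text{of}\ L:\ Fr_w=\sigma\}$ has density $1/n$ in $L$ (which then forces $\delta(P_C)=|C_G(\sigma)|^{-1}=|C|/|G|$).

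\emph{The cyclic case.} Working with Dirichlet density, I would expand, by orthogonality of the characters $\chi$ of $H$,
\[
\sum_{w:\,Fr_w=\sigma}\frac{1}{N(w)^{s}}=\frac1n\sum_{\chi\in\widehat H}\overline{\chi(\sigma)}\sum_{w}\frac{\chi(Fr_w)}{N(w)^{s}}=\frac1n\sum_{\chi\in\widehat H}\overline{\chi(\sigma)}\bigl(\log L(s,\chi)+O(1)\bigr)\quad(s\to1^{+}),
\]
where $L(s,\chi)=\prod_w(1-\chi(Fr_w)N(w)^{-s})^{-1}$ is the Artin $L$-function of $\chi$ and the $O(1)$ absorbs the contribution of higher prime powers. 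For the trivial character $L(s,\chi_0)=\zeta_L(s)$, which by Riemann--Roch is a rational function of $q^{-s}$ with a simple pole at $s=1$, so $\log L(s,\chi_0)\sim\log\frac{1}{s-1}$. It then remains to show $L(s,\chi)$ is holomorphic and nonzero at $s=1$ for every $\chi\neq\chi_0$; granting this, $\log L(s,\chi)=O(1)$ there, only the $\chi_0$-term survives, and the density equals $1/n$. For the non-vanishing I would use that over a function field $L(s,\chi)$ with $\chi\neq\chi_0$ is a polynomial in $q^{-s}$ (rationality of Artin $L$-functions, due to Weil, or via Grothendieck's cohomological formula), hence holomorphic at $s=1$, together with the factorization $\zeta_{K'}(s)=\prod_{\chi\in\widehat H}L(s,\chi)$, a purely formal rearrangement of Euler products over primes of $K'$ (using that Frobenius of an unramified prime in the abelian group $H$ is a well-defined element); since $\zeta_{K'}$ and $\zeta_L$ both have simple poles at $s=1$, the quotient $\prod_{\chi\neq\chi_0}L(s,\chi)=\zeta_{K'}(s)/\zeta_L(s)$ is holomorphic and nonzero at $s=1$, and as each factor is already holomorphic there, none can vanish.

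I expect the main obstacle to be twofold. Conceptually it is the non-vanishing $L(1,\chi)\neq0$ — though over function fields this is far cheaper than over number fields, since analytic continuation (indeed rationality) of $L(s,\chi)$ comes for free. Technically it is the combinatorial bookkeeping of the cyclic reduction, in particular transporting densities between $K$ and $L$ and the density-zero estimate for non-split primes. A fully geometric alternative would be to apply the Grothendieck--Lefschetz trace formula to the cover of curves attached to $K'/K$ and invoke the Riemann hypothesis for curves to bound the error term, which even yields an effective form of the equidistribution; but since the theorem is used only as a black box, the classical argument above is enough, and one may also simply cite it.
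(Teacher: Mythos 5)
The paper does not actually prove this theorem: it is used as a black box, the entire ``proof'' being the citation \cite[Theorem 6.3.1]{FJ}. Your sketch, by contrast, outlines the standard classical argument --- reduction to the cyclic subextension fixed by $\langle\sigma\rangle$ via the decomposition-group and degree-one-prime bookkeeping, then orthogonality of characters of $H$ together with holomorphy and non-vanishing of the abelian $L$-functions at $s=1$ --- and that outline is structurally correct; your closing remark that one may simply cite the result is exactly the route the paper takes. One point to repair if you flesh this out over function fields: it is not true that $L(s,\chi)$ is a polynomial in $q^{-s}$ for every nontrivial $\chi$; this fails precisely for characters that factor through the constant-field extension (equivalently, through the degree map), for which $L(s,\chi)$ is a constant twist of a zeta function and does have poles --- just not at $s=1$, so holomorphy and non-vanishing there must be checked separately (an easy computation, using that the zeros of the numerator lie on $|t|=q^{-1/2}$). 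This is worth flagging because the constant-field extension is exactly the delicate case of Chebotarev for function fields, where $Fr_v$ is forced by $\deg v \bmod m$; alternatively, your suggested geometric route via the Grothendieck--Lefschetz trace formula and the Riemann hypothesis for curves handles the geometric part and reduces the constant-field part to the same explicit check, at the price of more machinery but with an effective error term that the Dirichlet-density argument does not give.
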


\begin{proof}
\cite[Theorem 6.3.1]{FJ}.
\end{proof}

\begin{proof}[Proof of uniqueness in \re{RCFT}]
\begin{enumerate}
\item
Since $\bar{\mathbb{Q}}_{\ell}^{\times}$ is abelian, it is enough to check the value of $\rho$ on conjugacy classes. By continuity of $\rho$, in order to prove that it is unique it is enough to show that it is uniquely defined on some dense subset of $G_K$. We know by the theorem how it is defined on the conjugacy classes of $Fr_v$ $v\notin Supp(D)$ (namely $\rho(Fr_v)=\xi(\pi_v)$). Hence we must prove that the set of such conjugacy classes of $Fr_v$ is dense. By definition of the Krull topology, it is enough to prove that if $K'/K$ is a finite extension, then every element of $Gal(K'/K)$ is the image of some $Fr_v$ (i.e. image of any element of this conjugacy class) under the quotient map $G_K\rightarrow Gal(K'/K)$. By functoriality of the Frobenius, it is enough to show that any element of $Gal(K'/K)$ is in a conjugacy class of some Frobenius, but Chebotarev's density theorem states that this is indeed the case (actually, for infinitely many places $v$).

\item
Let $x\in\mathbb{A}_K^{\times}$, we must show that it is generated by the elements $\pi_v$ $v\notin Supp(D)$. By the weak approximation theorem we can find an element $y\in K^*\subseteq \mathbb{A}_K^{\times}$ such that $y\equiv x \mod \mathfrak{m}_{v_i}^{n_i}$ for all $v_i\in Supp(D)$, hence $(x/y)_{v_i}\in U_{n_i}$ but $x/y$ is equivalent to $x$ as an element of $K^{\times}\backslash\mathbb{A}_K^{\times}/\mathbb{O}_K^{\times}$, therefore we can replace $x$ by $x/y$ and assume that $(x)_{v_i}\in U_{n_i}$ for all $v_i\in Supp(D)$.

If we denote $val_v(x)$ the value of $x$ under the valuation defined by $v$ we have that as elements of $K^{\times}\backslash\mathbb{A}_K^{\times}/\mathbb{O}_K^{\times}$, $x$ is equivalent to $\underset{v\notin Supp(D)}{\prod}\pi_v^{val_v(x)}$, note that this product is actually finite by definition of $\mathbb{A}_K^{\times}$. We conclude that $\pi_v$ $v\notin Supp(D)$ generates $K^{\times}\backslash\mathbb{A}_K^{\times}/\mathbb{O}_K^{\times}$.

$\xi$ is a homomorphism, hence it is determined by its values on the generators $\pi_v$ $v\notin Supp(D)$. Since the theorem requires that $\xi(\pi_v)=\rho(Fr_v)$ it defines $\xi$ uniquely on $K^{\times}\backslash\mathbb{A}_K^{\times}/\mathbb{O}_K^{\times}$.
\end{enumerate}
\end{proof}

Therefore it remains to prove only the existence.

\section{The generalized Picard group}\label{S:GPG}

\begin{rem}
In this section we refer to the Zariski topology and not to the \'etale topology, unless otherwise stated explicitly.
\end{rem}

Our next goal is formulating the results of class field theory in terms of algebraic curves, so that the machinery of algebraic geometry will help us prove them. The basic connection between field extensions, which we want to understand, and algebraic geometry is provided by the following result:

\begin{thm}\label{E:FFAC}
Let $k$ be an algebraically closed field, there exists an equivalence of categories between the following:
\begin{enumerate}
\item (Function fields of transcendence degree one over $k$ and $k$ morphisms)$^{op}$.

\item Nonsingular connected projective curves and dominant morphisms.

\item Connected quasi-projective curves and dominant rational maps.

\end{enumerate}
\end{thm}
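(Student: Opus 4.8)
The plan is to exhibit mutually quasi-inverse functors between the three categories, with the function field serving as the common invariant. Define $F\colon (2)\to(1)$ by sending a nonsingular connected projective curve $X$ over $k$ to its function field $K(X)$, a finitely generated extension of $k$ of transcendence degree one, and a dominant morphism $f\colon X\to Y$ to the induced $k$-embedding $f^{*}\colon K(Y)\hookrightarrow K(X)$; since $(1)$ is the opposite of the category of function fields, $F$ is covariant. Likewise define $G\colon(3)\to(1)$ on connected quasi-projective curves and dominant rational maps by $X\mapsto K(X)$ and $f\mapsto f^{*}$. There is an evident functor $(2)\to(3)$ (a nonsingular connected projective curve is a connected quasi-projective curve, and a dominant morphism is a dominant rational map), compatible with $F$ and $G$. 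So it suffices to prove that $F$ and $G$ are equivalences; then $(2)\to(3)$ is one too, by two-out-of-three.

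For essential surjectivity of $F$ one must show that every function field $K/k$ of transcendence degree one is $K(X)$ for some nonsingular connected projective curve $X$. Realize $K$ as the function field of an affine curve over $k$ (take $\operatorname{Spec}$ of the $k$-subalgebra of $K$ generated by a finite set of generators of $K/k$), pass to a projective closure, and normalize: the normalization of a curve is a finite morphism, and a scheme finite over a projective scheme is projective, so the output is a nonsingular connected projective curve with function field $K$. Essential surjectivity of $G$ is then immediate, since nonsingular projective curves are quasi-projective.

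Full faithfulness has a single non-formal ingredient, the extension lemma: \emph{a dominant rational map from a nonsingular curve to a projective variety is a morphism defined on the entire curve}, which follows from the valuative criterion of properness applied at each point of the source (whose local ring is a discrete valuation ring with fraction field the function field). Granting this, for nonsingular connected projective $X,Y$ the map $\operatorname{Hom}(X,Y)\to\operatorname{Hom}_{k}(K(Y),K(X))$ is injective, because two dominant morphisms inducing the same field embedding agree at the generic point of $X$, so their equalizer --- a closed subscheme of $X$, since $Y$ is separated --- is all of $X$, as $X$ is integral; and it is surjective, because a $k$-embedding $\varphi\colon K(Y)\hookrightarrow K(X)$ yields a dominant rational map from $X$ to $Y$ (cover $Y$ by affines $\operatorname{Spec} A$, note $\varphi(A)\subseteq\mathcal{O}_{X}(U)$ for a suitable dense open $U$, and glue), which is a morphism by the extension lemma. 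The same reasoning, now without invoking the extension lemma, gives full faithfulness of $G$: a dominant rational map is represented by a morphism on a dense open, two such inducing the same field embedding coincide on a common dense open, and every $k$-embedding of function fields is realized on suitable affine opens.

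Essential surjectivity and full faithfulness together show $F$ and $G$ are equivalences, hence so is $(2)\to(3)$; in particular every connected quasi-projective curve has a complete nonsingular model, unique up to unique isomorphism. I expect the main obstacle to be essential surjectivity of $F$ --- actually producing a \emph{projective} nonsingular model of an arbitrary function field --- which relies on finiteness of normalization together with the fact that a scheme finite over a projective scheme is projective; the promotion of rational maps to morphisms via the valuative criterion is the other load-bearing step. A complete account is \cite[Chapter I, \S6]{Ha}.
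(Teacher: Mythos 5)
Your proposal is correct and is essentially an expanded version of what the paper does: the paper simply cites \cite[Chapter I, Corollary 6.12]{Ha} and recalls the dictionary, and your sketch is the standard argument behind that citation (the function field as the common invariant, a nonsingular projective model for every function field, and promotion of dominant rational maps to morphisms on a nonsingular curve with projective target via the valuative criterion). The only minor variation is in essential surjectivity, where Hartshorne builds the model through the abstract nonsingular curve of the function field while you normalize the projective closure of an affine model; both routes are standard and equally valid.
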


\begin{proof}
\cite[Chapter I, Corollary 6.12]{Ha}.
\end{proof}

The symbol $^{op}$ in the above theorem, means the opposite category: the same category with arrows in the reverse direction.

We remind this correspondence here in a few words, a more detailed account can be found e.g. in \cite{Ha}. The connection between (2) and (3) is that every quasi-projective curve is birationally isomorphic to a unique nonsingular projective model.

Also (2) corresponds to (1) by passing to the field of rational functions. If $f:X\rightarrow Y$ is a dominant morphism of nonsingular curves, then there exists a morphism  between the fields of rational functions $K(f):K(Y)\rightarrow K(X)$ defined by $\phi\mapsto\phi\circ f$.

The following correspond to each other under the correspondence of the above theorem To get the classical result we will use a similar correspondence for corves over the finite field $\mathbb{F}_q$, The following correspond to each other under the correspondence of the above theorem:
\begin{center}
    \begin{tabular}{ | l | l |}
    \hline
    Fields of transcendence degree 1 over $k$  & Nonsingular projective curves over $k$ \\ \hline
    Morphism of $k$ algebras   & Dominant morphism \\ \hline
    Unramified field extension   & \'Etale morphism \\ \hline
    Finite Galois extension  & Finite Galois covering \\ \hline
    Primes of  $K$   & Points of $X$ \\ \hline

    \hline
    \end{tabular}
\end{center}

From now on unless otherwise stated explicitly, let $X$ be a nonsingular projective connected curve over the the separably closed field $k$.

The next thing to do is to find the geometric analog of $K^{\times}\backslash\mathbb{A}_K^{\times}/\mathbb{O}_D^{\times}$.

From now on, let $D$ be a finite subscheme of $X$ and $[D]=\underset{i}{\sum} n_ix_i$ the corresponding
finite formal sum where $x_i\in|X|$ are different closed points and $n_i\in\mathbb{N}$ for all $i$. Recall that closed points of $X$ correspond
to places of $K$ (the function field of $X$), and elements of $K$ correspond to rational functions. Note also that
for any prime $v$, $K_v^{\times}/\mathcal{O}_v^{\times}\cong\mathbb{Z}$,
hence the group $\mathbb{A}_K^{\times}/\mathbb{O}_K^{\times}$ is the free abelian group
generated by the primes of $K$. This motivates the following definition.

\begin{defn}[Generalized Weil divisors]
\begin{enumerate}
\item
A {\em Weil divisor on $X-D$} is defined as an element of $Div(X-D)$ - the free
abelian group generated by the closed points of $X-D$.

\item
Let $f\in \mathcal{K}_D^{\times}(X)$ i.e. a nonzero rational function
such that $\forall i:f\equiv1\mod \mathfrak{m}_{x_i}^{n_i}$ (we also denote
it by $f\equiv1\mod D$).
We can associate to $f$ a Weil divisor on $X-D$:
$(f)=\sum_{x\in X}ord_x(f)x$, where $ord_x(f)$ is the order of the zero
of $f$ in $x$. Weil divisors of the form $(f)$ are called {\em $D$-principal}.
The $D$-principal divisors form a subgroup of $Div(X-D)$ since if
$f_1,f_2\in \mathcal{K}_D(X)^{\times}$ then $(f_1/f_2)=(f_1)-(f_2)$. Two divisors are called {\em $D$-linearly equivalent} if their difference is $D$-principal.

\item
The quotient group of $Div(X-D)$ by the $D$-principal divisors is called the {\em $D$-Weil class group}
of $X$ and is denoted by $Cl_D(X)$.

\item
Define the {\em degree map} $deg:Div(X-D)\rightarrow \mathbb{Z}$ by
$deg(\sum b_ix_i)=\sum b_i$ recall that for any  rational function $f$,
$deg((f))=\sum_{x\in X}ord_x(f)=0$, hence there exists a well defined map on the quotient
$deg:Cl_D(X)\rightarrow \mathbb{Z}$. The inverse image of $d\in \mathbb{Z}$ under this map
if the set of degree $d$ divisors (under the equivalence $D_1\sim D_2$ iff
$D_1-D_2=(f)$ for some $f\in \mathcal{K}^{\times}_D(X)$), and is denoted also by $Cl^d(X)$.

\item
A Weil divisor $D'=\sum b_ix_i$ is called {\em effective},
if $\forall i:b_i\geq0$.

\item
Let $D_1,D_2$ be two Weil divisors, we say that $D_1\geq D_2$ iff $D_1-D_2$ is effective. We will write $f_1\equiv f_2 \mod D'$ if $(f_1-f_2)\geq D'$ as elements of $Div(X)$.
\end{enumerate}
\end{defn}

\begin{rem}
Points of $X$ correspond to primes of $\mathcal{O}_K$ and elements of $K$ are rational functions on $X$. We obtain that in the case when $X$ defined over $\mathbb{F}_q$, $Cl_D(X)$ is exactly $K^{\times}\backslash\mathbb{A}_K^{\times}/\mathbb{O}_D^{\times}$ defined above.
\end{rem}

From now on $\mathcal{O}$ denotes the sheaf of regular functions on $X$.

Another useful way, which is more geometric, to look at divisors
is using line bundles. We define it here:

\begin{defn} [Generalized Picard group]
The {\em generalized Picard group} $Pic_D(X)$ of $X$
Is the moduli space of pairs $(\mathcal{L},\psi)$ where $\mathcal{L}$ is a line bundle on $X$, and $\psi:\mathcal{L}|_D\xrightarrow{\approx} \mathcal{O}|_D$ is a trivialization over $D$ (which mean a map $\psi=\prod \psi_i$ where $\psi_i:\mathcal{L}_{x_i}/\mathfrak{m}_{\mathcal{L},x_i}^{n_i} \xrightarrow{\approx}\mathcal{O}_{x_i}/\mathfrak{m}_{x_i}^{n_i}$, $\mathfrak{m}_{\mathcal{L},x_i}$ is the maximal ideal of $\mathcal{L}_{x_i}$). Two such pairs $(\mathcal{L},\psi)$ and $(\mathcal{L}',\psi')$ are equivalent if there exist an isomorphism $\theta:\mathcal{L}\xrightarrow{\approx}\mathcal{L}'$ such that the following diagram commutes:

\begin{center}
$\xymatrix{
\mathcal{L}|_D \ar[rr]^{\theta|_D} \ar[rd]_{\psi} &&
\mathcal{L}'|_D  \ar[ld]^{\psi'}\\
&\mathcal{O}|_D}$
\end{center}

The group operation of the generalized Picard group is $\otimes$ of line bundles, $(\mathcal{L},\psi)\otimes (\mathcal{L}',\psi')=(\mathcal{L}'',\psi'')$ where $\mathcal{L}''=\mathcal{L}\otimes\mathcal{L}'$ and $\psi''$ is the composition $\mathcal{L}|_D\otimes\mathcal{L}'|_D \xrightarrow{\psi\otimes\psi'}\mathcal{O}|_D\otimes\mathcal{O}|_D \xrightarrow{a\otimes b\mapsto ab}\mathcal{O}|_D$.
It makes the $Pic_D(X)$ into an abelian group.
\end{defn}

\begin{rem}
\begin{enumerate}
\item
The Picard group can be given a structure of a commutative algebraic group, it is done using the functorial approach to moduli schemes. The proof in the case $D=\emptyset$ can be found in \cite[Theorem 4.8]{Kl}.

Later, in propositions \re{UFIBER}, \re{RFIBER} and \re{FPIC} and also in the proof of \re{CHAR0}, we will show that fibers of specific maps involving Picard schemes are isomorphic to an affine space, projective space ar other variety as sets, all these isomorphisms  are also isomorphisms of schemes, to prove it the functorial approach must be used.

\item
In the above definitions, if we take $D=\emptyset$ we obtain the usual well known
examples, the class group and
Picard group of $X$, they will be denoted simply by $Cl(X)$ and $Pic(X)$.

\item
We defined the Picard group in terms of line bundles. Any vector bundle defines
a locally constant sheaf by taking its sheaf of sections, this operation is an
equivalence of categories between the category of $k$-vector bundles
of rank $r$ and the category of
locally constant sheaves of $k$-vector spaces with dimension $r$
(proof of this claim can be found e.g. in \cite[Proposition 1.8.1]{LP}). Thus, in what follows we freely switch between these two categories
and sometimes think on the Picard group in terms of locally constant sheaves of
rank 1, i.e. invertible sheaves.
\end{enumerate}
\end{rem}

\begin{note}
When the curve $X$ will be clear from the context, we sometimes omit it from the notation and write $Cl_D$ and $Pic_D$
instead of $Cl_D(X)$ and $Pic_D(X)$, respectively.
\end{note}

If $X$ is defined over $\mathbb{F}_q$, which is the most interesting case as far as class field theory concerned, The Picard group is just the automorphic side of class field theory, $K^{\times}\backslash\mathbb{A}_K^{\times}/\mathbb{O}_D^{\times}$:

\begin{prop}\label{E:GPGI}
There exists an isomorphism of groups between
$K^{\times}\backslash\mathbb{A}_K^{\times}/\mathbb{O}_D^{\times}$ and the set of $\mathbb{F}_q$-points of $Pic_D$, $Pic_D(\mathbb{F}_q)$.
\end{prop}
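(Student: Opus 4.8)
The plan is to construct the isomorphism directly and then check it is well-defined and bijective. First I would recall that an $\mathbb{F}_q$-point of $Pic_D(X)$ is a pair $(\mathcal{L},\psi)$ defined over $\mathbb{F}_q$, where $\mathcal{L}$ is a line bundle on $X$ and $\psi$ a trivialization over $D$. Since $X$ is a nonsingular projective curve, a line bundle corresponds (up to isomorphism) to a Weil divisor class, and choosing a rational section identifies $\mathcal{L}$ with $\mathcal{O}(E)$ for some divisor $E$; the stalk-wise data of $\psi$ together with this section gives, at each $x_i\in\mathrm{Supp}(D)$, a unit in $\mathcal{O}_{x_i}/\mathfrak{m}_{x_i}^{n_i}$, while at each $v\notin\mathrm{Supp}(D)$ the section has some order $\mathrm{ord}_v$. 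This is exactly the data of an adele in $\mathbb{A}_K^{\times}$ modulo $\mathbb{O}_D^{\times}$: at $x_i$ one records a class in $K_{x_i}^{\times}/U_{n_i}$ and at the other $v$ a class in $K_v^{\times}/\mathcal{O}_v^{\times}\cong\mathbb{Z}$. Changing the rational section of $\mathcal{L}$ by a rational function $f\in K^{\times}$ multiplies the adele by $i(f)$, and $f$ preserves the trivialization $\psi$ precisely when $f\equiv 1\bmod \mathfrak{m}_{x_i}^{n_i}$ for all $i$, i.e. $f\in\mathcal{K}_D^{\times}(X)$. So after quotienting by $K^{\times}$ the ambiguity in the section disappears.

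The key steps in order: (1) Attach to $(\mathcal{L},\psi)$ an adele class by picking a rational section $s$ of $\mathcal{L}$, reading off $\mathrm{ord}_v(s)$ at $v\notin\mathrm{Supp}(D)$ and the unit $\psi_i(s)$ at $x_i\in\mathrm{Supp}(D)$; observe this lives in $\mathbb{A}_K^{\times}/\mathbb{O}_D^{\times}$ and, after further quotient by $K^{\times}$, is independent of the choice of $s$ by the remark above. (2) Check this map is a group homomorphism: the tensor product operation on $Pic_D$ corresponds to multiplication of the section data, hence to multiplication in $\mathbb{A}_K^{\times}$. (3) Surjectivity: given an adele $(a_v)_v$, build $\mathcal{L}=\mathcal{O}(\sum_{v\notin\mathrm{Supp}(D)}\mathrm{val}_v(a_v)\,v)$ with its canonical rational section, and use the units $a_{x_i}$ to define the trivialization $\psi$ over $D$; this pair maps to the class of $(a_v)_v$. (4) Injectivity: if $(\mathcal{L},\psi)$ maps to the trivial class, then some rational section has trivial divisor away from $D$ and trivial unit data on $D$, which forces $(\mathcal{L},\psi)\cong(\mathcal{O},\mathrm{id})$; equivalently, the kernel is the image of $K^{\times}$ under the section-change action, which we have already quotiented out. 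The preceding remark identifying $Cl_D(X)$ with $K^{\times}\backslash\mathbb{A}_K^{\times}/\mathbb{O}_D^{\times}$ can be invoked to shortcut part of this, reducing the claim to the standard identification $Pic_D(\mathbb{F}_q)\cong Cl_D(X)$ between line-bundles-with-trivialization and generalized divisor classes.

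The main obstacle I expect is the careful bookkeeping at the ramified points $x_i$: one must match the scheme-theoretic trivialization $\psi_i:\mathcal{L}_{x_i}/\mathfrak{m}_{\mathcal{L},x_i}^{n_i}\xrightarrow{\approx}\mathcal{O}_{x_i}/\mathfrak{m}_{x_i}^{n_i}$ with the adelic factor $K_{x_i}^{\times}/U_{n_i}$ in a way that is compatible with base change, and verify that the equivalence relation on pairs (existence of an isomorphism $\theta$ commuting with the $\psi$'s) corresponds exactly to multiplying the adele by an element of $i(K^{\times})\cdot\mathbb{O}_D^{\times}$. A secondary subtlety is that "points over $\mathbb{F}_q$" must be interpreted functorially (as in the first remark after the definition of $Pic_D$), so that a rational section defined over $\mathbb{F}_q$ and the Galois-descent of the local data are genuinely available; since $X$ is already defined over $\mathbb{F}_q$ and line bundles on curves admit rational sections, this causes no real trouble, but it should be stated explicitly.
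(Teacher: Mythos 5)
Your construction is correct and is essentially the paper's own argument run in the opposite direction: your chosen rational section is the paper's generic trivialization $\phi_K$, your local readings at the points of $Supp(D)$ (after lifting $\psi_i$ to trivializations over $Spec\,\mathcal{O}_{x_i}$, well-defined modulo $U_{n_i}$) are its local trivializations $\phi_x$, and the quotients by $K^{\times}$ and $\mathbb{O}_D^{\times}$ account for changing these choices exactly as in the paper's passage from the auxiliary group $\mathbb{B}\cong\mathbb{A}_K^{\times}$ to $Pic_D(\mathbb{F}_q)$. The bookkeeping subtlety you flag is precisely what the paper resolves by carrying trivializations over $Spec\,\mathcal{O}_x$ at all closed points rather than only the data $\psi$ over $D$, so no new idea is missing.
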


\begin{proof}
Consider first the group $\mathbb{B}$ of triples of the form $(\mathcal{L},\{(\phi_x)\}_{x\in|X|},\phi_K)$ where $\mathcal{L}$ is a line bundle on $X$. Since $\mathcal{L}$ is locally constant, for any closed point $x\in|X|$ we can trivilize it over $Spec\,\mathcal{O}_x$, let $\phi_x:\mathcal{L}|_{Spec\,\mathcal{O}_x}\xrightarrow{\approx}\mathcal{O}_x$ be such a trivilization. We can also trivialize it over some small enough open neighborhood,
and therefore over the generic point $Spec\,K$, let us denote such a trivialization by
$\phi_K:\mathcal{L}|_{Spec\,K}\xrightarrow{\approx}K$.

$\mathbb{B}$ becomes a group under the operation of tensor product, if  $(\mathcal{L}_i,\{(\phi_{ix})\}_{x\in|X|},\phi_{iK}) \in \mathbb{B}$ for $i=1,2$ then their product $(\mathcal{L},\{(\phi_{x})\}_{x\in|X|},\phi_{K})$ is defined to be $\mathcal{L}=\mathcal{L}_1\otimes \mathcal{L}_2$,
$\phi_x$ is the composition:
\begin{center}
$\mathcal{L}_1|_{Spec\,\mathcal{O}_x}\otimes\mathcal{L}_2|_{Spec\,\mathcal{O}_x}\xrightarrow{\phi_{1x}\otimes\phi_{2x}} \mathcal{O}_x\otimes\mathcal{O}_x\xrightarrow{s\otimes s'\mapsto ss'}\mathcal{O}_x$
\end{center}
and similarly for $\phi_K$.

We claim first that there exist an isomorphism of groups $\alpha:\mathbb{B}\rightarrow \mathbb{A}^{\times}_K$ defined as follows: let $(\mathcal{L},\{(\phi_{x})\}_{x\in|X|},\phi_{K}) \in\mathbb{B}$ restricting each $\phi$ to $Spec\,K_x$ we get isomorphisms
$g_x=\phi_x\circ\phi_K^{-1}:Spec\,K_x\xrightarrow{\approx}Spec\,K_x$, these are elements of $K^{\times}_x$ for all $x$, since $\mathcal{L}$ is locally constant, almost all $g_x$ are in $\mathcal{O}^{\star}_x$ therefore this is an element of $\mathbb{A}_K^{\times}$.

$\alpha$ is homomorphism: let $(\mathcal{L}_i,\{(\phi_{ix})\}_{x\in|X|},\phi_{iK}) \in \mathbb{B}$ for $i=1,2$ then consider the following isomorphisms:
\begin{center}
$K_x\xrightarrow{ss'\leftmapsto s\otimes s'}K_x\otimes K_x\xrightarrow{\phi^{-1}_{1K}\otimes\phi^{-1}_{2K}} \mathcal{L}_1|_{Spec\,K_x}\otimes\mathcal{L}_2|_{Spec\,K_x}\xrightarrow{\phi_{1x}\otimes\phi_{2x}} K_x\otimes K_x\xrightarrow{s\otimes s'\mapsto ss'}K_x$
\end{center}
and under these maps we have:
\begin{center}
$1\mapsto 1\otimes 1 \mapsto \phi_{1K}^{-1}(1)\otimes {\phi}_{2K}^{-1}(1)\mapsto g_{1x}(1)\otimes g_{2x}(1)\mapsto g_{1x}(1)g_{2x}(1)=g_{1x}g_{2x}$
\end{center}
if $(\mathcal{L},\{(\phi_{x})\}_{x\in|X|},\phi_{K}) =(\mathcal{L}_1,\{(\phi_{1x})\}_{x\in|X|},\phi_{1K})\cdot (\mathcal{L}_2,\{(\phi_{2x})\}_{x\in|X|},\phi_{2K})$ the above map shows that $g_x=\phi_x\circ\phi_K^{-1}=g_{1x}g_{2x}$ hence $\alpha$ is an homomorphism.

$\alpha$ is injective. Indeed, assume $(\mathcal{L},\{(\phi_{x})\}_{x\in|X|},\phi_{K})\in \ker(\alpha)$, therefore $g_x=\phi_x\circ \phi_K^{-1}|_{Spec\,K_x}=1$ the identity, hence $\phi_x,\phi_K:\mathcal{L}|_{Spec\,K_x}\rightarrow K_x$ are compatible, and by the gluing property we can glue them to get $\phi:\mathcal{L}\xrightarrow{\approx}\mathcal{O}$, therefore $(\mathcal{L},\{(\phi_{x})\}_{x\in|X|},\phi_{K})$ is trivial.

$\alpha$ is also surjective, let $\{(g_{x})\}_{x\in|X|} \in \mathbb{A}^{\times}_K$ since by gluing to define a sheaf $\mathcal{L}$ it is sufficient to define $\mathcal{L}_{\mathcal{O}_x}=\mathcal{L}|_{Spec\,\mathcal{O}_x}=\mathcal{O}_x$, $\mathcal{L}_{K}=\mathcal{L}|_{Spec\,K}=K$ and give gluing isomorphisms on the intersections: $\mathcal{L}_{\mathcal{O}_x}|_{Spec\,K_x}\xrightarrow{\approx} \mathcal{L}_{K}|_{Spec\,K_x}$. We take this gluing isomorphisms to be the required $g_x$ the sheaf $\mathcal{L}$ we obtain is locally constant since $(g_x)\in \mathbb{A}^{\times}_K$, and $\phi_x,\phi_K$ are defined to be the identifications $\mathcal{L}_{\mathcal{O}_x}=\mathcal{O}_x$ and $\mathcal{L}_{K}=K$, respectively. $\alpha(\mathcal{L},\{(\phi_{x})\}_{x\in|X|},\phi_{K})=\{(g_{x})\}_{x\in|X|}$.

If we consider pairs of the
form $(\mathcal{L},\{(\phi_x)\}_{x\in|X|})$ as quotients of $\mathbb{B}$, under the imorphism $\alpha$ it is
it is the same as introducing the equivalence relation defined by $K^{\times}$ on $\mathbb{A}^{\times}_K$ hence there exists an isomorphism between $K^{\times}\backslash\mathbb{A}_K^{\times}$ and such pairs.

$Pic_D(\mathbb{F}_q)$ is a quotient of such pairs, the quotient map is $(\mathcal{L},\{(\phi_x)\}_{x\in|X|})\mapsto (\mathcal{L},\psi)$ where $\psi|_{Spec\,\mathcal{O}_x}\equiv\phi_x \mod \mathfrak{m}_{x_i}^{n_i}$ here as usual
$\mathfrak{m}_{x_i}$ are the maximal ideals of $\mathcal{O}_{x_i}$. In particular, two such pairs $(\mathcal{L},\{(\phi_x)\}_{x\in|X|})$ and $(\mathcal{L}',\{\phi'_x\}_{x\in|X|})$ maps to the same element of $Pic_D(\mathbb{F}_q)$ if $\mathcal{L}\cong \mathcal{L}'$ and the maps induce the same trivialization $\psi$ over $D$, that is $(\phi_x\circ(\phi'_x)^{-1})\in \mathbb{O}^{\times}_D$. We conclude that $K^{\times}\backslash\mathbb{A}_K^{\times}/\mathbb{O}_D^{\times}\cong Pic_D(\mathbb{F}_q)$ and we are done.
\end{proof}

\begin{note}

We can define a map $\mathcal{O}_D:Div(X-D)\rightarrow Pic_D$ by $x\mapsto (\mathcal{O}(x),\psi)$ where $\mathcal{O}(x)$ is the sheaf of regular function with $(f)\geq [x]$ and $\psi:\mathcal{O}(x)|_D\rightarrow \mathcal{O}_D$ is the identity map, this map induces a canonical isomorphism of group $Cl_D\xrightarrow{\approx} Pic_D$, which will also be denoted by $\mathcal{O}_D$.

\end{note}

\begin{defn}\label{E:PHIDEF}\
\begin{enumerate}
\item
We can define an action $act:(X-D)\times Pic_D\rightarrow Pic_D$ by $act(x, \mathcal{L}):=\mathcal{O}_D([x])\otimes \mathcal{L}$.

\item
We sometimes use the notation $act_x:Pic_D\rightarrow Pic_D$ for $act_x(\mathcal{L})=act(x,\mathcal{L})$.

\item
In particular, taking $(\mathcal{L},\psi)=(\mathcal{O},id)$ we obtain a map $\phi:X-D\rightarrow Pic_D$, defined by $\phi(x)=\mathcal{O}_D([x])$. From now on, the letter $\phi$ will be used only for this map.
The image of $\phi$ is actually contained in the connected component $Pic_D^1$ of $Pic_D$ defined by line bundles of degree 1.
\end{enumerate}
\end{defn}

The following generalization of the classical notion will be very important for us later:

\begin{defn}
A {\em complete linear system} is the set of effective divisors on $X-D$ $D$-equivalent to some $D_0$. It is denoted by $|D_0|_D$.
\end{defn}

\begin{note}
$l(D_0)$ will denote $dim_k(H^0(X,\mathcal{O}(D_0)))$.
\end{note}

\begin{prop}\label{E:UFIBER}
In the case $[D]=0$, $D_0$ is a Weil divisor. The set $|D_0|$ is in one to one correspondence with the projective space $H^0(X,\mathcal{O}(D_0))/k^{\times}\cong \mathbb{P}^{l(D_0)-1}$.
\end{prop}

\begin{proof}
\cite[Chapter II, Proposition 6.13]{Ha}.
\end{proof}

We now want to generalize the above proposition, and calculate the complete linear system in the ramified case, $[D]>0$.

\begin{prop}\label{E:RFIBER}
Assume $Supp(D_0)\bigcap Supp(D)=\emptyset$, $[D]>0$ and $D_0\geq 0$ then the elements of the complete linear system $|D_0|_D$ corresponds to the points of the affine space $H^0(X,\mathcal{O}(D_0-[D]))\cong\mathbb{A}^{l(D_0-[D])}$.
\end{prop}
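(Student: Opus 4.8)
The strategy is to reduce the ramified statement to the unramified one (Proposition~\ref{E:UFIBER}) by understanding which effective divisors $D_1 \geq 0$ on $X-D$ are $D$-linearly equivalent to the fixed divisor $D_0$. By definition, $D_1 \in |D_0|_D$ means $D_1 - D_0 = (f)$ for some $f \in \mathcal{K}_D^{\times}(X)$, that is, a nonzero rational function with $f \equiv 1 \bmod \mathfrak{m}_{x_i}^{n_i}$ at every point $x_i \in \mathrm{Supp}(D)$. The first step is to reinterpret this: since $\mathrm{Supp}(D_0) \cap \mathrm{Supp}(D) = \emptyset$ and $D_1$ is effective and lies in $Div(X-D)$, the divisor $(f) = D_1 - D_0$ is supported away from $D$, and the condition $f \equiv 1 \bmod D$ is strictly stronger than $f \in \mathcal{O}_{x_i}^{\times}$. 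I claim the map $D_1 \mapsto (f)$ identifies $|D_0|_D$ with the set of such $f$ modulo the scalars $k^{\times}$ that also satisfy $c \equiv 1 \bmod D$; but the only such scalar is $c = 1$ (a nonzero constant congruent to $1$ modulo a nontrivial ideal must equal $1$), so in fact $|D_0|_D$ is in bijection with $\{\,f \in \mathcal{K}_D^{\times}(X) : (f) + D_0 \geq 0\,\} = \{\,f : (f) \geq -D_0,\ f \equiv 1 \bmod D\,\}$ — no projectivization occurs, which already explains why we expect an affine rather than a projective space.

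\textbf{Main computation.} Now I translate the two conditions into membership in a vector space. The condition $(f) \geq -D_0$ says exactly $f \in H^0(X, \mathcal{O}(D_0))$. The condition $f \equiv 1 \bmod D$, i.e. $f - 1 \in \mathfrak{m}_{x_i}^{n_i}$ for each $i$, says that the section $f - 1$, which a priori lies in $H^0(X,\mathcal{O}(D_0))$ (here using $\mathrm{Supp}(D_0)\cap\mathrm{Supp}(D)=\emptyset$, so that $1 \in H^0(X,\mathcal{O}(D_0))$ and constants impose no pole conditions at $D$), vanishes to order $\geq n_i$ at each $x_i$, i.e. $f - 1 \in H^0(X, \mathcal{O}(D_0 - [D]))$. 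Hence the assignment $f \mapsto f - 1$ is a bijection
\[
|D_0|_D \;\xrightarrow{\ \approx\ }\; H^0(X, \mathcal{O}(D_0 - [D])),
\]
with inverse $g \mapsto g + 1$; note $g+1$ is automatically nonzero since $g \equiv 0 \bmod D$ forces $g + 1 \equiv 1 \bmod D \neq 0$. This is a bijection of the set $|D_0|_D$ with the $k$-vector space $H^0(X,\mathcal{O}(D_0-[D]))$, which by definition has dimension $l(D_0 - [D])$, so it is isomorphic to $\mathbb{A}^{l(D_0-[D])}$ as a set. (As remarked after the definition of $Pic_D$, the functorial/moduli description upgrades this set bijection to an isomorphism of schemes, since $|D_0|_D$ is naturally a fiber of a morphism of Picard-type schemes; I will invoke that rather than reprove it.)

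\textbf{Expected obstacle.} The routine parts are the two equivalences "$(f)\geq -D_0 \Leftrightarrow f \in H^0(\mathcal{O}(D_0))$" and "$f\equiv 1\bmod D \Leftrightarrow f-1 \in H^0(\mathcal{O}(D_0-[D]))$"; the only genuinely delicate point is making sure the translation $f \leftrightarrow f-1$ is well-defined on both sides, i.e. that every element of $|D_0|_D$ really does yield a function (not just a divisor class) and that the disjointness hypothesis $\mathrm{Supp}(D_0)\cap\mathrm{Supp}(D) = \emptyset$ is exactly what guarantees $H^0(X,\mathcal{O}(D_0))$ contains the constants and that "$f-1$ vanishes along $D$" is equivalent to subtracting $[D]$ from the divisor $D_0$. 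I would also double-check the edge behaviour: the hypothesis $[D] > 0$ ensures there is no scalar ambiguity (so no $\mathbb{P}$ appears), and $D_0 \geq 0$ together with disjointness of supports ensures $f = 1$ itself lies in $|D_0|_D$, so the affine space is nonempty and has a distinguished "origin" — consistent with the $\mathbb{A}^{l(D_0-[D])}$ description. The comparison with Proposition~\ref{E:UFIBER} is then just the observation that when $[D]=0$ the rigidifying condition disappears, scalars re-enter, and one recovers $\mathbb{P}^{l(D_0)-1}$.
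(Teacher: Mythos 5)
Your proof is correct and takes essentially the same route as the paper: both arguments rest on the translation $f\leftrightarrow f\pm 1$, using $\mathrm{Supp}(D_0)\cap\mathrm{Supp}(D)=\emptyset$ to convert the congruence $f\equiv 1\bmod D$ into membership in $H^0(X,\mathcal{O}(D_0-[D]))$, and using $[D]>0$ to show the only scalar compatible with the congruence is $1$, so no projectivization occurs. The only difference is organizational --- the paper defines $h(f)=(f+1)+D_0$ and verifies injectivity and surjectivity directly, while you first identify $|D_0|_D$ with the set of admissible functions and then translate --- but the underlying ideas are identical.
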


\begin{proof}
$H^0(X,\mathcal{O}(D_0-[D]))$ is the set of global sections of the sheaf $\mathcal{O}(D_0-[D])$, such global sections are just rational functions $f\in K$ that satisfies for any point $x\in |X|$ $ord_x(f)\geq ord_x([D]-D_0)$, this is simply the statement that $(f)\geq [D]-D_0$ or $(f)+D_0-[D]$ is effective.

We know that the constant function $1\in K$ is in $H^0(X,\mathcal{O}(D_0))$ since $D_0$ is effective. Hence we can define a map $H^0(X,\mathcal{O}(D_0-[D]))\rightarrow H^0(X,\mathcal{O}(D_0))$ by $f\mapsto f+1$.

Also note that since $Supp(D_0)\bigcap Supp(D)=\emptyset$, a function $f\in H^0(X,\mathcal{O}(D_0-[D]))$ satisfies $f\equiv 0 \mod [D]$, hence $f+1\equiv1 \mod [D]$.

On the other hand, $f+1\in H^0(X,\mathcal{O}(D_0))$ therefore $(f+1)+D_0$ is effective. We obtain a map $h:H^0(X,\mathcal{O}(D_0-[D]))\rightarrow |D_0|_D$ defined by $h(f)=(f+1)+D_0$. If we prove that $h$ is bijective we are done.

$h$ is injective: we claim that if for two functions $f_1,f_2\in H^0(X,\mathcal{O}(D_0-[D]))$ satisfies $(f_1+1)=(f_2+1)$ then $f_1=f_2$. Indeed, $(\frac{f_1+1}{f_2+1})=0$ the zero divisor, it implies that $\frac{f+1}{f'+1}\in H^0(X, \mathcal{O}^{\times})$ is a (nonvanishing) regular function, but $X$ is projective, so a regular function is necessarily constant. Hence $\frac{f_1+1}{f_2+1}=\lambda\in k^{\times}$ and it follows that the difference $f_1-\lambda f_2=\lambda-1$ is a constant function. But if $[D]>0$ and $Supp(D_0)\bigcap Supp(D)=\emptyset$ then the only constant function in $H^0(X,\mathcal{O}(D_0-[D]))$ is zero, therefore $\lambda=1$ and $f_1=f_2$.

$h$ is surjective: we must show that any function that satisfies $g\equiv 1 \mod [D]$ and $(g)+D_0\geq 0$ is of the form $g=f+1$ where $f\in H^0(X,\mathcal{O}(D_0-[D]))$; this is easy: $(g)+D_0\geq 0$ implies $(f)+D_0\geq 0$ (because $D_0$ is effective and $f,g$ have the same poles). Also, $g\equiv 1 \mod [D]$ implies $f\equiv 0 \mod [D]$ and therefore $ord_x(f)\geq ord_x([D])$ for any $x\in Supp(D)$. We conclude that $(f)-[D]+D_0$ is effective, in other words $f\in H^0(X,\mathcal{O}(D_0-[D]))$, so we are done.
\end{proof}

We now give a relation between the generalized Picard in the ramified and unramified case, but first we define:

\begin{defn}
Define a functor $\mathfrak{R}_D:Sch^{op}\rightarrow Ab$ by:

$\mathfrak{R}_D(T)=$ the group of trivializations of $\mathcal{O}_{T\times X}|_{T\times D}$.

We state without giving a proof that this functor is representable by an algebraic group $R_D$ whose set of $k$-points is isomorphic to $\underset{x_i\in Supp(D)}{\prod}\mathcal{O}_{x_i}^{\times}/(1+\mathfrak{m}_{x_i}^{n_i})$.
\end{defn}

\begin{prop}\label{E:FPIC}
There exist a natural surjective homomorphism of algebraic groups  $Pic_D\rightarrow Pic$ with kernel isomorphic to $R_D/\mathbb{G}_m$.
\end{prop}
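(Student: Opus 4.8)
The plan is to fit $\pi$ into a short exact sequence of algebraic groups
\[
1 \longrightarrow \mathbb{G}_m \longrightarrow R_D \longrightarrow Pic_D \xrightarrow{\pi} Pic \longrightarrow 1 ,
\]
so that the asserted description of $\ker(\pi)$ as $R_D/\mathbb{G}_m$ becomes a formal consequence of exactness. Each arrow is described on functors of points, so by the functorial description of these moduli schemes (as in the remark following the definition of $Pic_D$) all of them are automatically morphisms of schemes; since the group law on $Pic_D$, $Pic$ and $R_D$ is in every case $\otimes$, each arrow is visibly a homomorphism. The map $\pi$ is the forgetful one, $(\mathcal{L},\psi)\mapsto\mathcal{L}$. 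The map $R_D\to Pic_D$ sends a trivialization $\tau$ of $\mathcal{O}|_D$ to the pair $(\mathcal{O},\tau)$. The map $\mathbb{G}_m\to R_D$ sends a scalar $\lambda$ to the constant trivialization "multiplication by $\lambda$", i.e.\ to the diagonal element $(\lambda,\dots,\lambda)$ of $\prod_i \mathcal{O}_{x_i}^\times/(1+\mathfrak{m}_{x_i}^{n_i})$ on $k$-points.

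First I would check exactness at $\mathbb{G}_m$ and at $R_D$. Injectivity of $\mathbb{G}_m\to R_D$ is immediate: a constant $\lambda$ with $\lambda\equiv 1 \bmod \mathfrak{m}_{x_i}^{n_i}$ for all $i$ has $\lambda-1\in k\cap\mathfrak{m}_{x_i}^{n_i}=0$ since $n_i\ge 1$, so $\lambda=1$. For the image of $\mathbb{G}_m$ to equal $\ker(R_D\to Pic_D)$: a pair $(\mathcal{O},\tau)$ is equivalent to $(\mathcal{O},\mathrm{id})$ precisely when there is an automorphism $\theta$ of $\mathcal{O}$ with $\theta|_D=\tau$, and $\theta$ is multiplication by an element of $H^0(X,\mathcal{O}^\times)$; since $X$ is projective and connected, $H^0(X,\mathcal{O}^\times)=k^\times$ (and more generally $H^0(T\times X,\mathcal{O}^\times)=H^0(T,\mathcal{O}^\times)$, $X$ being proper and geometrically connected), so $\tau$ must be a constant, i.e.\ in the image of $\mathbb{G}_m$.

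Next, exactness at $Pic_D$ and surjectivity of $\pi$. That $\pi$ kills the image of $R_D$ is clear since $(\mathcal{O},\tau)\mapsto\mathcal{O}$. Conversely, if $(\mathcal{L},\psi)\in\ker(\pi)$ then, fppf-locally on the base, $\mathcal{L}$ is isomorphic to the structure sheaf; choosing such an isomorphism $\theta$ identifies $(\mathcal{L},\psi)$ with $(\mathcal{O},\psi\circ(\theta|_D)^{-1})$, which lies in the image of $R_D$, and one descends this along the fppf cover (the remaining ambiguity in $\theta$ being again $\mathbb{G}_m$, which is smooth, so descent goes through). Hence $\ker(\pi)$ is the image of $R_D$, which by the previous step is $R_D/\mathbb{G}_m$. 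For surjectivity of $\pi$: given a line bundle $\mathcal{L}$ on $X\times T$, its restriction to $D\times T$ is an invertible sheaf on the finite affine scheme $D\times T$, an infinitesimal thickening of finitely many copies of $T$, hence trivializable Zariski-locally on $T$; this produces a $\psi$ over such an open, so $\pi$ is locally surjective on the base and therefore surjective as a morphism of schemes.

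The hard part will not be any single computation but making everything take place at the level of schemes/functors rather than $k$-points: pinning down the kernel rests on the identification $H^0(X,\mathcal{O}^\times)=k^\times$ and hence on $X$ being projective and connected, while surjectivity of $\pi$ rests on $D$ being a finite (Artinian, affine) subscheme so that line bundles trivialize over $D\times T$ locally on $T$; and proving that the comparison $R_D/\mathbb{G}_m\xrightarrow{\sim}\ker(\pi)$ is an isomorphism of schemes, not merely of point sets, is where one must actually invoke the representability of $\mathfrak{R}_D$ together with smoothness of $\mathbb{G}_m$ to carry out the fppf descent.
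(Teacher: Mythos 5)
Your proposal is correct and follows essentially the same route as the paper: the forgetful homomorphism $(\mathcal{L},\psi)\mapsto\mathcal{L}$, whose kernel is identified with trivializations of $\mathcal{O}|_D$ modulo the global automorphisms $H^0(X,\mathcal{O}^\times)=k^{\times}$, i.e.\ $R_D/\mathbb{G}_m$. You in fact go further than the paper's written proof, which argues only on $k$-points and explicitly defers the scheme-level (functor-of-points) verification to the following remark, and which leaves surjectivity implicit; your Zariski-local trivialization over $D\times T$ and fppf-descent step are precisely the completion that remark asks for.
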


\begin{proof}
On the level of points, the kernel of the map $Pic_D\rightarrow Pic$ defined by $(\mathcal{L},\psi)\mapsto \mathcal{L}$ consists of all trivializations $\psi:\mathcal{L}|_D\xrightarrow{\approx} \mathcal{O}|_D$. Such trivialization corresponds to trivializations of the form $\theta:\mathcal{O}|_D\xrightarrow{\approx} \mathcal{O}|_D$, indeed, since $\mathcal{L}|_D$ is trivial we can choose a trivialization $\psi_0:\mathcal{L}|_D\xrightarrow{\approx} \mathcal{O}|_D$ and the map $\theta\mapsto \theta\circ\psi_0$ is bijective because its inverse is composition with $\psi_0^{-1}$. Therefore the fiber is isomorphic to the set of all trivializations $\theta:\mathcal{O}|_D\xrightarrow{\approx} \mathcal{O}|_D$, a trivialization is an automorphism of  $\underset{x_i\in Supp(D)}{\prod} \mathcal{O}_{x_i}/\mathfrak{m}_{x_i}^{n_i}$ preserving each factor, but this is simply multiplication by an element of $R_D=\underset{x_i\in Supp(D)}{\prod}\mathcal{O}_{x_i}^{\times}/(1+\mathfrak{m}_{x_i}^{n_i})$. There is another equivalence on the fiber since by the definition of $Pic_D$ two trivializations of $\mathcal{O}$ are considered the same if they differ by an automorphism of $\mathcal{O}$ i.e. nonzero constant function, hence the fiber is $R_D/\mathbb{G}_m$.
\end{proof}

\begin{rem}
Note that the statement is about algebraic schemes, but we proved a statement about the set of points. Therefore the proof is incomplete, to prove it as algebraic varieties, the functorial approach must be used. The method for the proof will be as follows:
\begin{enumerate}
\item
Define functors $\mathfrak{Pic}_D,\mathfrak{Pic}:Sch^{op}\rightarrow Ab$ by:

$\mathfrak{Pic}_D(T)=$ the group of invertible sheaves $\mathcal{L}$ on $T\times X$ with trivialization over $T\times D$, divided by the group of invertible sheaves on $T\times X$ which are pullbacks of a sheaf on $T$ under the projection map $T\times X\rightarrow T$.

$\mathfrak{Pic}(T)=$ the group of invertible sheaves $\mathcal{L}$ on $T\times X$.

\item
Show that these functors are representable, $Pic$ and $Pic_D$ are by definition the schemes representing $\mathfrak{Pic}$ and $\mathfrak{Pic}_D$ respectively.

\item
Give a natural transformation between the functors $\mathfrak{Pic}_D\rightarrow \mathfrak{Pic}$, it is simply the transformation that forgets the trivialization.

\item
To conclude, show that natural transformation induces a surjective homomorphism between the representing scheme $Pic_D\rightarrow Pic$, with kernel $R_D$.
\end{enumerate}
\end{rem}

\section{The \'etale fundamental group}\label{S:EFG}

On our way to find geometric analogs of notions from class field theory, we now want to define the analog of the Galois group. The group $Gal(K^{un}/K)$, the maximal unramified extension, turns out to be a more natural geometric object than $Gal(\bar{K}/K)$, so we define the geometric analog of $Gal(K^{un}/K)$. Recall that finite Galois unramified extensions of $K$ corresponds to finite \'etale Galois covering of $X$, since the Galois group is the group of automorphisms of the covering, the following definition is very natural:

\begin{defn}[The \'etale fundamental group]
Let $X$ be a connected scheme, $\bar{x}$ a geometric point. Define a functor from the category of finite \'etale coverings of $X$ to the category of sets $F:FEt/X\rightarrow sets$ by $F(Y)=Hom_X(\bar{x},Y)$. $Aut_X(Y)$ acts on $F(Y)$ such that if $Y$ is connected, then the action is faithful. If the action is also transitive, then $Y$ is called Galois over $X$. The functor $F$ is pro-representable, i.e there exist a projective system $\{X_j\rightarrow X_i:i<j\in I\}$ indexed by $I$ of finite \'etale coverings $p_i:X_i\rightarrow X$ such that $F(Y)=\underset{i}{\varinjlim}Hom_X(X_i,Y)$.  The projective system $\widetilde{X}=(X_i)_{i\in I}$ can be chosen such that the coverings $p_i:X_i\rightarrow X$ are Galois coverings of $X$ since the Galois objects are a cofinal system: for any finite \'etale covering $Y'\rightarrow X$, there exist $Y\rightarrow Y'$ finite \'etale, such that $Y\rightarrow X$ is Galois. We define the {\em fundamental group} of $X$ to be:
If $i<j\in I$ then the map $X_j\rightarrow X_i$ induces a map $Aut_X(X_j)\rightarrow Aut_X(X_i)$ therefore this is also a projective system, and we can define:  $\pi_1(X,\bar{x})=Aut_X(\widetilde{X}):=\underset{i}{\varprojlim}Aut_X(X_i)$. There are some statements in this definition given here without proofs, the proofs can be found in \cite{Mu1}.
\end{defn}

This fundamental group is the group of automorphisms of the "maximal" \'etale covering, exactly as the group $Gal(K^{un}/K)$ is the automorphism group of the maximal unramified field extensions of $K$. Therefore, in the unramified case of class field theory - this is exactly the group $G_K$ we want to explore. We shall now give some properties of the fundamental group, which will be useful later.

\begin{rem}
The fundamental group is independent of the base point, $\pi_1(X,\bar{x})\xrightarrow{\approx}\pi_1(X,\bar{x}')$ canonically up to inner automorphism of $\pi_1(X,\bar{x})$.
\end{rem}

\begin{note}
Following the above remark, we will sometimes omit the base point from the notation and write simply $\pi_1(X)$ instead of $\pi_1(X,\bar{x})$.

When we refer to the fundamental group of a disconnected scheme (such as $\pi_1(Pic(X))$), the meaning is the fundamental group of one of its connected components; in all cases considered the components will have isomorphic fundamental groups. Sometimes the reference will be to some specific connected component, and it will be clear which one from the context.
\end{note}

\begin{con} [Functoriality of the fundamental group]

Let $f:Y\rightarrow X$ be a morphism such that $f(\bar{y})=\bar{x}$. we have a contravariant functor $f^*:FEt/X\rightarrow FEt/Y$ defined by base change $f^*(X')=X'\underset{X}{\times}Y$. If $\alpha\in\pi_1(Y,\bar{y})$ we can define an element $f_*(\alpha)\in\pi_1(X,\bar{x})$ as follows: if $X'\rightarrow X$ is a finite Galois covering, then $\alpha$ gives an automorphism $\bar{\alpha}$ of any finite etale  $X'\underset{X}{\times}Y\rightarrow Y$ (it is just its image under the quotient map $\pi_1(Y)\twoheadrightarrow \pi_1(Y)/\pi_1(X'\underset{X}{\times}Y)$). $\bar{\alpha}$ induces an automorphism $f_*(\bar{\alpha})$ of $X'\rightarrow X$ since by property of base change, the fibers of $X'\underset{X}{\times}Y\rightarrow Y$ are naturally isomorphic to the fibers of $X'\rightarrow X$. We obtain a compatible set of automorphisms for all finite Galois coverings $X'\rightarrow X$ (compatibility follows from the fact that all automorphism came from quotients of the same $\alpha$), this is the required element $f_*(\alpha)\in\pi_1(X,\bar{x})$.
\end{con}

We can translate between properties of coverings and the morphism  $f_*:\pi_1(Y,\bar{y})\rightarrow\pi_1(X,\bar{x})$, for example:

\begin{prop}\label{E:Sur}
$f_*:\pi_1(Y,\bar{y})\rightarrow\pi_1(X,\bar{x})$ is surjective iff for any connected \'etale covering $X'\rightarrow X$, $X'\underset{X}{\times}Y$ is connected.
\end{prop}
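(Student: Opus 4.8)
The plan is to use the standard Galois-theoretic dictionary for the étale fundamental group, in which $\pi_1(X,\bar x)$ classifies finite étale covers of $X$ just as $\mathrm{Gal}(\bar K/K)$ classifies field extensions, and under which a continuous homomorphism $f_*\colon \pi_1(Y,\bar y)\to\pi_1(X,\bar x)$ is surjective if and only if it is injective on the level of the classifying categories in the appropriate sense. Concretely, I would first reduce to the statement that $f_*$ is surjective precisely when, for every finite étale $X'\to X$, the pullback $X'\times_X Y\to Y$ has exactly as many connected components as $X'\to X$ has ``over the image of $\bar y$'', and then isolate the connected case.

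\emph{Step 1 (translate surjectivity of a profinite map into a statement about open subgroups).} A continuous homomorphism of profinite groups $f_*\colon\pi_1(Y)\to\pi_1(X)$ is surjective if and only if its image is dense, i.e.\ if and only if for every open normal subgroup $N\trianglelefteq\pi_1(X)$ the composite $\pi_1(Y)\to\pi_1(X)\to\pi_1(X)/N$ is surjective. Via the Galois correspondence, open normal subgroups $N$ correspond to finite Galois covers $X'\to X$ with $\mathrm{Aut}_X(X')\cong\pi_1(X)/N$, and the composite above is (by the very construction of $f_*$ recalled in the Construction preceding the proposition) the action of $\pi_1(Y)$ on the fiber of $X'\times_X Y\to Y$ over $\bar y$, factoring through $\mathrm{Aut}_Y(X'\times_X Y)\hookrightarrow\mathrm{Aut}_X(X')$. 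So surjectivity of $f_*$ is equivalent to: for every finite Galois $X'\to X$, the group $\pi_1(Y)$ acts transitively on $F_Y(X'\times_X Y)$, equivalently $X'\times_X Y$ is connected.

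\emph{Step 2 (pass from Galois covers to arbitrary connected covers).} This gives the proposition for \emph{Galois} $X'\to X$; to get it for an arbitrary connected étale cover $X'\to X$, I would use that every connected finite étale cover is dominated by a Galois one: choose $X''\to X'\to X$ with $X''\to X$ Galois. If $X''\times_X Y$ is connected then its image under the finite map $X''\times_X Y\to X'\times_X Y$ is connected and surjective (base change of a surjection is a surjection), so $X'\times_X Y$ is connected; this proves the ``only if''. For the ``if'' direction at the level of Galois covers, one argues contrapositively: if some Galois $X'\times_X Y$ is disconnected then $f_*$ is not surjective, directly from Step 1. Conversely, if $f_*$ is not surjective, there is an open proper subgroup, hence a connected cover $X'\to X$ such that the $\pi_1(Y)$-action on $F_Y(X'\times_X Y)$ is not transitive, i.e.\ $X'\times_X Y$ is disconnected; here one must be slightly careful since a connected cover need not be Galois, but replacing $X'$ by the Galois closure $X''$ and using that a disconnected $X''\times_X Y$ forces (by Step 2's ``only if'') a disconnected $X'\times_X Y$ is not automatic — instead one uses that the stabilizer of a point describes a connected component, and non-transitivity on some fiber is exactly non-density of the image.

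\emph{Main obstacle.} The genuine content, and the step I expect to require the most care, is keeping the bookkeeping between the three equivalent pictures straight: open subgroups of $\pi_1(X)$, transitive $\pi_1(X)$-sets, and connected covers — together with the fact, used implicitly, that $F_Y(X'\times_X Y)\cong F_X(X')$ compatibly with the $\pi_1(Y)\to\pi_1(X)$ action (this is the ``fibers of the base change are the fibers'' remark in the Construction). Once that identification is in place, ``$f_*$ surjective $\iff$ image dense $\iff$ transitive on every finite $\pi_1(X)$-set $\iff$ every $X'\times_X Y$ connected'' is essentially formal, so I would spend the write-up making the fiber identification and the reduction to Galois covers precise rather than on the profinite topology, which is routine.
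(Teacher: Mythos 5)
Your proposal is correct, and it is essentially the standard argument; note that the paper itself offers no proof here, only a citation to Murre (Ch.~V, 5.2.4), and what you sketch is exactly the dictionary proof given in such references: identify $F_Y(X'\times_X Y)\cong F_X(X')$ compatibly with $f_*$ (universal property of the fiber product applied to the geometric point $\bar y$), use ``connected $\Leftrightarrow$ transitive action on the fiber,'' and test surjectivity of $f_*$ against open (normal) subgroups. Two small tidying remarks. First, in Step~1 you should say explicitly that $f_*(\pi_1(Y))$ is closed (continuous image of a compact group), so ``dense image'' and ``surjective'' coincide; you use this silently. Second, the caution at the end of Step~2 is unnecessary and slightly muddles the logic: the hypothesis of the ``if'' direction quantifies over \emph{all} connected covers, and Galois covers are connected by the paper's definition, so Step~1 alone already gives ``(all pullbacks connected) $\Rightarrow f_*$ surjective''; no Galois closure or stabilizer argument is needed there. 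The Galois-closure/domination argument is needed only where you first use it, namely for the ``only if'' direction (connectedness of $X''\times_X Y$ surjects onto $X'\times_X Y$ since $X''\to X'$ is finite étale surjective and surjectivity is stable under base change), and that part of your write-up is fine. With those two points made explicit, the proof is complete.
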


\begin{proof}
\cite[Chapter V, 5.2.4]{Mu1}.
\end{proof}

We want to understand what happens to the fundamental group under a finite \'etale covering.

\begin{prop}\label{E:FEC}
 Let $\pi:Y\rightarrow X$ be a finite \'etale covering and $\bar{y}\in Y, \bar{x}\in X$ are geometric points such that $\pi(\bar{y})=\bar{x}$. Then $\pi_*:\pi_1(Y,\bar{y})\hookrightarrow \pi_1(X,\bar{x})$ is injective, and it is of finite index.
\end{prop}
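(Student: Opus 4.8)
The plan is to interpret both statements — injectivity of $\pi_*$ and finiteness of its index — in terms of the fiber functor $F$ and the categories of finite \'etale coverings, using Proposition \ref{E:Sur} as the main structural input. Recall that $\pi_1(X,\bar x)$ is the automorphism group of the pro-object $\widetilde X$, and a finite \'etale covering $Z\to X$ corresponds, via $F$, to a finite continuous $\pi_1(X,\bar x)$-set; connected coverings correspond to transitive $\pi_1(X,\bar x)$-sets, i.e. to (conjugacy classes of) open subgroups, and the index of the subgroup equals the degree of the covering. The image of $\pi_*$ will be identified with the open subgroup $H\leq\pi_1(X,\bar x)$ corresponding to the connected covering $\pi:Y\to X$ itself (after replacing $Y$ by the connected component of $\bar y$, which we may assume), together with the observation that $\pi_1(Y,\bar y)$ acts on $F(Y)$ through the stabilizer of the point $\bar y\in F(Y)$.

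First I would set up the translation precisely. Fix the connected covering $\pi:Y\to X$ of degree $n$; the fiber $F(Y)=\mathrm{Hom}_X(\bar x,Y)$ is a transitive $\pi_1(X,\bar x)$-set of cardinality $n$, and $\bar y$ determines a distinguished element of $F(Y)$. Let $H=\mathrm{Stab}_{\pi_1(X,\bar x)}(\bar y)$, so $[\pi_1(X,\bar x):H]=n<\infty$. The key claim is that $\pi_*$ induces an isomorphism $\pi_1(Y,\bar y)\xrightarrow{\ \approx\ }H$. To see this, note that finite \'etale coverings of $Y$ are, by Galois theory of the fiber functor, equivalent to finite continuous $\pi_1(Y,\bar y)$-sets, while finite \'etale coverings of $X$ equipped with a point of the fiber over a fixed point of $Y$ correspond to finite continuous $H$-sets; the base-change functor $f^*$ from the Construction above matches the forgetful/restriction functor along $\pi_*$, and by the equivalence of categories (\cite{Mu1}) this forces $\pi_*$ to be the inclusion $H\hookrightarrow\pi_1(X,\bar x)$ up to isomorphism. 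In particular $\pi_*$ is injective and has index $n$.

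For injectivity one can also argue more directly, which I would include as it is clean: suppose $\alpha\in\pi_1(Y,\bar y)$ lies in $\ker\pi_*$. Given any connected \'etale covering $Z\to Y$, there is a connected \'etale covering $Z'\to X$ dominating it (compose $Z\to Y\to X$ and take the Galois closure, or use that $Y\to X$ is \'etale so $Z\to X$ is \'etale and pass to a connected Galois cover above it); then $\alpha$ acts on $F(Z)$ compatibly with the action of $\pi_*(\alpha)$ on $F(Z')$, and $\pi_*(\alpha)=1$ forces $\alpha$ to act trivially on the fiber of every such $Z$, hence on every $F(Z)$, hence $\alpha=1$ since $F$ is faithful on the pro-universal cover $\widetilde Y$. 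For the finite-index statement, the cleanest route is the identification of the image with $\mathrm{Stab}(\bar y)$ above; alternatively, one shows surjectivity of $\pi_1(Y,\bar y)\to H$ by invoking Proposition \ref{E:Sur} applied to $Y\to X$ pulled back along connected covers of $Y$: a connected \'etale cover $X'\to X$ with a chosen point over $\bar x$ in the $H$-orbit pulls back to a connected cover of $Y$, which is exactly the condition that the relevant $H$-set is transitive, i.e. that the image of $\pi_*$ is all of $H$.

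The main obstacle I anticipate is purely bookkeeping: making the dictionary ``coverings of $Y$ $\leftrightarrow$ pointed coverings of $X$ above a point of $Y$'' precise enough that the identification $\mathrm{im}(\pi_*)=\mathrm{Stab}(\bar y)$ is rigorous rather than merely plausible — in particular checking that the base-change functor $f^*$ really does correspond to restriction of $\pi_1(X,\bar x)$-sets along $\pi_*$, and that $\bar y\in F(Y)$ has stabilizer of index exactly $\deg\pi$. Once that compatibility is nailed down, both injectivity and finiteness of the index are immediate from the orbit-stabilizer count $[\pi_1(X,\bar x):\mathrm{Stab}(\bar y)]=\#F(Y)=\deg\pi$. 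I would therefore spend the bulk of the write-up on that compatibility and cite \cite{Mu1} for the underlying Galois-categorical facts, including Proposition \ref{E:Sur}.
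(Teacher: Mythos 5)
Your proposal is essentially correct, but it follows a genuinely different route from the paper. The paper works directly with the pro-universal covering: it observes that the universal covering $\widetilde{X}=(X_i)$ of $Y$, composed with $\pi$, is also a universal covering of $X$, so that $Aut_Y(X_i)\leq Aut_X(X_i)$ gives injectivity on passing to the limit; for finite index it chooses $Y'\rightarrow Y$ finite \'etale with $Y'\rightarrow X$ Galois and sandwiches $\pi_1(Y')\leq\pi_1(Y)\leq\pi_1(X)$, where $\pi_1(Y')$ is normal of finite index (quotient $Aut_X(Y')$). You instead run the Galois-category argument: identify $F(Y)$ with a transitive $\pi_1(X,\bar{x})$-set of size $\deg\pi$ and show $\mathrm{im}(\pi_*)=\mathrm{Stab}(\bar{y})$, which, once the compatibility ``base change along $\pi$ corresponds to restriction along $\pi_*$'' is verified, yields the sharper conclusion $[\pi_1(X,\bar{x}):\pi_1(Y,\bar{y})]=\deg\pi$ and identifies $\pi_1(Y,\bar{y})$ with an open subgroup, not merely one of finite index. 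The trade-off is exactly the point you flag: for the index statement the inclusion $\mathrm{im}(\pi_*)\subseteq\mathrm{Stab}(\bar{y})$ alone proves nothing (it only bounds the index from below), so the surjectivity onto the stabilizer is an essential step, and your sketch of it via Proposition \ref{E:Sur} still needs to be carried out carefully (orbits of $\mathrm{im}(\pi_*)$ on $F_X(Z')\cong F_Y(\pi^*Z')$ are the components of $\pi^*Z'$, etc.); the paper's Galois-closure sandwich avoids this entirely at the cost of not computing the index. Your direct kernel argument is also fine, and in fact simpler than you make it: since any finite \'etale $Z\rightarrow Y$ is already finite \'etale over $X$, the closed immersion $Z\hookrightarrow\pi^*Z$ over $Y$ gives the needed equivariant embedding $F_Y(Z)\subseteq F_X(Z)$ without passing to a Galois closure. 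So both proofs are valid; yours buys the precise index and the stabilizer description at the price of the bookkeeping you correctly identify as the main burden, while the paper's is shorter and closer to the definition of $\pi_1$ used in Section \ref{S:EFG}.
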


\begin{proof}
The fundamental group $\pi_1(Y)$ is by definition the group of automorphisms of the universal covering $p_i:X_i\rightarrow Y$, $\widetilde{X}=(X_i)_{i\in I}$, taking the composition $f\circ p_i:X_i\rightarrow X$ this is also the universal covering of $X$. But clearly any $Y$-automorphism is also $X$-automorphism so $Aut_Y(X_i)\leq Aut_X(X_i)$. These inclusions induces the map on the fundamental groups $\pi_1(Y)=\underset{i}{\varprojlim}Aut_Y(X_i)\hookrightarrow \pi_1(X)=\underset{i}{\varprojlim}Aut_X(X_i)$. It is of finite index since we can consider a finite \'etale covering $Y'\rightarrow Y$ such that $Y'\rightarrow X$ is Galois. In this case $\pi_1(Y')\lhd\pi_1(X)$ is a normal subgroup, and the quotient is just $Aut_X(Y')$ which is a finite group since its size equal the degree of the covering $Y'\rightarrow X$. But $\pi_1(Y')\leq\pi_1(Y)\leq\pi_1(X)$ and the fact that $\pi_1(Y')\leq\pi_1(X)$ is of finite index implies that $\pi_1(Y)\leq\pi_1(X)$ is also of finite index, and we are done.
\end{proof}

\begin{exmpl}
The projective line over a separably closed field $k$ is simply connected ($\pi_1(\mathbb{P}_k^1,\bar{x})$ is trivial).
\end{exmpl}

In order to prove it we will use the following theorem:

\begin{thm}[Hurwitz's theorem]
Let $f:Y\rightarrow X$ be finite and separated morphism of curves (as in \cite{Ha}, curve here is nonsingular projective curve over an algebraically closed field $k$) of degree $n$ then $2g(Y)-2=n(2g(X)-2)+degR$, where $g$ stands for genus and $R$ is the ramification divisor of $f$.
\end{thm}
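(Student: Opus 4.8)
The plan is to deduce the genus formula from a comparison of canonical divisors via the sheaf of relative differentials. Since $X$ and $Y$ are nonsingular projective curves over the algebraically closed field $k$ and $f$ is nonconstant, $f$ is automatically finite and flat; I also take $f$ to be separable, which is needed for the formula (and is the hypothesis in \cite{Ha}). First I would write down the canonical exact sequence of $\mathcal{O}_Y$-modules $f^*\Omega_{X/k}\to\Omega_{Y/k}\to\Omega_{Y/X}\to 0$ and observe that separability makes the left-hand map generically an isomorphism; since $\Omega_{Y/k}$ is an invertible sheaf, hence torsion-free, the left-hand map is injective, so we obtain a short exact sequence $0\to f^*\Omega_{X/k}\to\Omega_{Y/k}\to\Omega_{Y/X}\to 0$. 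The cokernel $\Omega_{Y/X}$ is then a torsion sheaf supported on the finitely many ramified points, and by definition the ramification divisor is $R=\sum_{P\in Y}\operatorname{length}_{\mathcal{O}_{Y,P}}\bigl((\Omega_{Y/X})_P\bigr)\,[P]$, so $\deg R=\dim_k H^0(Y,\Omega_{Y/X})$.

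Next I would extract a relation of degrees from the exact sequence. By additivity of the Euler characteristic (equivalently, of the degree) in short exact sequences of coherent sheaves on a curve, $\deg\Omega_{Y/k}=\deg(f^*\Omega_{X/k})+\deg\Omega_{Y/X}$; in divisor language this says $K_Y\sim f^*K_X+R$, hence $\deg K_Y=\deg(f^*K_X)+\deg R$. I would then prove the projection-type identity $\deg(f^*L)=n\cdot\deg L$ for every line bundle $L$ on $X$: it suffices to treat $L=\mathcal{O}_X([Q])$ for a closed point $Q$, where $f^*[Q]=\sum_{P\mapsto Q}e_P[P]$ with $e_P$ the ramification index, and the fundamental identity $\sum_{P\mapsto Q}e_P=n$ for a finite flat morphism of degree $n$ (using that $k$ is algebraically closed so all residue fields are $k$) gives $\deg f^*[Q]=n$. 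Therefore $\deg(f^*K_X)=n\,\deg K_X$.

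Finally I would invoke the standard fact that for a nonsingular projective curve $C$ of genus $g(C)$ one has $\deg K_C=2g(C)-2$ (a consequence of Riemann--Roch / Serre duality, which I take as known). Substituting $\deg K_Y=2g(Y)-2$ and $\deg K_X=2g(X)-2$ into $\deg K_Y=n\,\deg K_X+\deg R$ yields exactly $2g(Y)-2=n\bigl(2g(X)-2\bigr)+\deg R$.

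The step I expect to be the main obstacle is the local analysis around the ramified points: verifying that $f^*\Omega_{X/k}\to\Omega_{Y/k}$ is injective precisely because $f$ is separable, and confirming that the cokernel $\Omega_{Y/X}$ is a torsion sheaf whose total length recovers $\deg R$. Concretely, if $t$ is a uniformizer at $Q$, $s$ a uniformizer at $P$ over $Q$, and $f^*t=u\,s^{e_P}$ with $u$ a unit, then $(\Omega_{Y/X})_P$ is the cyclic $\mathcal{O}_{Y,P}$-module generated by $ds$ modulo $d(u\,s^{e_P})$, which has finite length (equal to $e_P-1$ in the tame case, and $\ge e_P-1$ in general), and one must check these contributions match the definition of $R$. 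Everything else — additivity and multiplicativity of degree, the fundamental identity for finite flat morphisms of curves, and $\deg K_C=2g(C)-2$ — is standard and I would cite it to \cite{Ha}.
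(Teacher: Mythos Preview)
Your proposal is correct and is precisely the standard proof from \cite[Chapter~IV, Proposition~2.3 and Corollary~2.4]{Ha}; the paper itself does not give an independent argument but simply cites this reference, so your write-up in fact supplies the details behind the citation rather than offering an alternative route.
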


\begin{proof}
\cite[Chapter IV, Corollary 2.4]{Ha}.
\end{proof}

\begin{proof}[Proof that $\mathbb{P}^1$ is simply connected]
We want to show that any finite \'etale covering of $\mathbb{P}^1$ is trivial. Let $f:Y\rightarrow \mathbb{P}^1$ be a connected finite \'etale covering. Finite morphisms are proper, \'etale morphisms are smooth. Hence $Y$ is smooth, irreducible (since connected and smooth) i.e. a curve. Therefore we can use Hurwitz's theorem for $f:Y\rightarrow X=\mathbb{P}^1$. Since \'etale morphisms are unramified we have $R=0$. According to Hurwitz $2g(Y)-2=n(g(\mathbb{P}^1)-2)=-2n$ and the only solution is $n=1, g(Y)=0$, a trivial covering $\mathbb{P}^1\rightarrow \mathbb{P}^1$.
\end{proof}

\begin{exmpl}\label{E:PSC}
$\mathbb{P}^n$, $n\geq2$ is also simply connected.
\end{exmpl}

We prove it using:

\begin{thm}[The connectedness theorem]
Let $Y$ be an irreducible variety, $f:Y\rightarrow\mathbb{P}^n$ a proper morphism and $L\subseteq\mathbb{P}^n$ a linear subvariety of codimension $<dimY$. Then $f^{-1}(L)$ is connected.
\end{thm}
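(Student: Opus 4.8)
The plan is to reduce the theorem, by two soft reductions, to the single assertion that a hyperplane section of an irreducible projective variety of dimension at least $2$ is connected, and then to derive that assertion from cohomology together with the theorem on formal functions.

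For the first reduction, replace $f$ by its Stein factorization $Y\xrightarrow{g}Y'\xrightarrow{h}\mathbb{P}^{n}$, where $g$ is proper and surjective with $g_{\ast}\mathcal{O}_{Y}=\mathcal{O}_{Y'}$ --- so that every fibre of $g$ is connected --- and $h$ is finite. Then $f^{-1}(L)=g^{-1}(h^{-1}(L))$, and the preimage of a connected set under a proper surjective morphism with connected fibres is again connected; hence $f^{-1}(L)$ is connected if and only if $h^{-1}(L)$ is. We may therefore assume from now on that $f$ is finite and $Y$ is irreducible projective, of dimension $d$ say, and put $c=\operatorname{codim}_{\mathbb{P}^{n}}L$, so that $0\le c\le d-1$. (For a general proper $f$ this reduction uses $c<\dim f(Y)$, which is what the hypothesis $c<\dim Y$ supplies precisely when $f$ is finite --- the situation in which the theorem is applied here.)

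For the second reduction, write $L=H_{1}\cap\dots\cap H_{c}$ with the $H_{i}$ hyperplanes, so that $f^{-1}(L)=f^{\ast}H_{1}\cap\dots\cap f^{\ast}H_{c}$ is an intersection of $c$ ample effective Cartier divisors on $Y$ (ample because $f$ is finite, so that $\dim Y\le n$). To show $f^{-1}(L)$ is connected it is enough to join any two of its points $y_{0},y_{1}$: choose a hyperplane $H$ containing the line spanned by $f(y_{0})$ and $f(y_{1})$ --- a line that lies in $L$ --- but not containing $L$, which is possible when $c\le d-2$ since the hyperplanes through a fixed line form a $\mathbb{P}^{n-2}$ while those containing $L$ form only a $\mathbb{P}^{c-1}$ with $c-1\le n-3<n-2$. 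Then $y_{0},y_{1}$ both lie in $f^{-1}(L\cap H)$, which is connected by the case of codimension $c+1$; iterating, it suffices to treat $c=d-1$. A parallel induction on $d$, using that a general hyperplane section of an irreducible projective variety of dimension at least $2$ is irreducible (Bertini's theorem, in characteristic zero; in positive characteristic one keeps working with intersections of several ample divisors and appeals to Grothendieck's connectedness theorem directly), then brings us to the statement: \emph{if $Y$ is an irreducible projective variety of dimension at least $2$ and $D$ is an ample effective divisor on $Y$, then $D$ is connected.}

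This last statement is the main obstacle. The standard approach starts from the exact sequence $0\to\mathcal{O}_{Y}(-D)\to\mathcal{O}_{Y}\to\mathcal{O}_{D}\to 0$; since $Y$ is integral and proper one has $H^{0}(Y,\mathcal{O}_{Y})=k$ and $H^{0}(Y,\mathcal{O}_{Y}(-D))=0$, so the number of connected components of $D$ is at most $1+\dim_{k}H^{1}(Y,\mathcal{O}_{Y}(-D))$, and it is enough to prove $H^{1}(Y,\mathcal{O}_{Y}(-D))=0$ --- equivalently, that the affine cone over $Y$ has depth at least $2$ at its vertex. In characteristic zero this follows, after passing to a resolution of singularities, from Kodaira-type vanishing. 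In arbitrary characteristic --- the case really needed here, since the theorem is applied to finite coverings of $\mathbb{P}^{n}$ over a separably closed field, where Kodaira vanishing can fail --- one argues instead through the theorem on formal functions: the completed local ring of the cone at its vertex is shown to be connected in codimension one, so that its punctured spectrum, and hence $D$, is connected. Carrying out this piece of formal geometry is the heart of the proof; the rest is bookkeeping.
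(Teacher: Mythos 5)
The paper offers no argument for this statement at all---its ``proof'' is the citation \cite[Theotem II 1.4]{DS}---so your proposal is measured against a reference, not an internal proof. Your preliminary reductions are fine: the Stein factorization step, the correct observation that the hypothesis should really be $\operatorname{codim}L<\dim f(Y)$ (as printed, with $\dim Y$, the statement is false for non--generically-finite $f$: take $Y=C\times\mathbb{P}^1$ with $C\subset\mathbb{P}^2$ a conic, $f$ the projection to $C\subset\mathbb{P}^2$, $L$ a general line---harmless here since the paper only applies the theorem to finite \'etale covers), and the trick of joining two points of $f^{-1}(L)$ inside $f^{-1}(L\cap H)$, which legitimately reduces everything to $c=d-1$.

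The genuine gap is the passage from the case $c=d-1\geq 2$ to the single-ample-divisor statement. Your ``parallel induction on $d$'' needs a hyperplane $H\supseteq L$ with $f^{-1}(H)$ irreducible, so that the induction can be applied to $Y_1=f^{-1}(H)\rightarrow H\cong\mathbb{P}^{n-1}$; but Bertini irreducibility governs general members of the full system $f^*|\mathcal{O}_{\mathbb{P}^n}(1)|$, not of the constrained subsystem of hyperplanes containing $L$. The map attached to that subsystem is the projection from $L$ composed with $f$, and its image can be a curve (e.g.\ when $f(Y)$ is a cone whose vertex lies in $L$), in which case the general---possibly every---member $f^{-1}(H)$, $H\supseteq L$, is reducible and no admissible irreducible $Y_1$ need exist. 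The obvious repair, intersecting one divisor at a time and applying the divisor case to each irreducible component of the partial intersection, also fails: two components of $D_1\cap\dots\cap D_{c-1}$ may meet only in a locus that the next divisor misses, so connectedness does not propagate. This is precisely why the classical proofs (Grothendieck's connectedness theorem from SGA~2, or Fulton--Hansen-type arguments) do not proceed by Bertini induction. You implicitly concede the point by ``appealing to Grothendieck's connectedness theorem directly'' in positive characteristic---but characteristic $p$ is the only case the paper needs (\'etale covers of $\mathbb{P}^n$ over $\bar{\mathbb{F}}_q$), so in the relevant case your argument, like the paper's, ultimately rests on an external citation, while the piece you do prove in detail (connectedness of a single ample divisor; note also that the needed property of the completed cone is depth $\geq 2$, i.e.\ normality after normalizing $Y$ and re-embedding projectively normally, not ``connected in codimension one'') does not by itself cover $\operatorname{codim}L=d-1\geq 2$.
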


\begin{proof}
\cite[Theotem II 1.4]{DS}.
\end{proof}

\begin{proof}[Proof that $\mathbb{P}^n$ is simply connected]
Let $\mathbb{P}^1\hookrightarrow\mathbb{P}^n$ be an inclusion of a linear subvariety. And $X'\rightarrow\mathbb{P}^n$ be a finite connected \'etale covering. Then $X'\underset{\mathbb{P}^n}{\times}\mathbb{P}^1\rightarrow\mathbb{P}^1$ is connected according to the connectedness theorem. Proposition \re{Sur} above then implies that $\pi_1(\mathbb{P}^1)\rightarrow\pi_1(\mathbb{P}^n)$ is surjective. But by the previous example, $\pi_1(\mathbb{P}^1)$ is trivial, hence also $\pi_1(\mathbb{P}^n)$.
\end{proof}

In the case $k=\mathbb{C}$, the field of complex numbers, we have the \'etale fundamental group $\pi_1(X)$ and the topological fundamental group with respect to the complex topology $\pi_1^{top}(X(\mathbb{C}))$. A relation between the two groups will be useful in calculations of fundamental groups. We can't expect the two groups to be isomorphic in general, since topological covering spaces can be of infinite degree while algebraic covering cannot. But there is still a strong connection provided by:

\begin{thm}[The Riemann's existence theorem]
Let $X$ be a connected scheme over $\mathbb{C}$ and $Y'\rightarrow X(\mathbb{C})$ a finite unramified covering of complex analytic spaces, then there exist a unique finite \'etale covering $Y\rightarrow X$ such that $Y'=Y(\mathbb{C})$. It gives an equivalence of categories between finite \'etale coverings of $X$, and finite topological covering of $X(\mathbb{C})$.
\end{thm}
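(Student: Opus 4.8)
The plan is to build the analytification functor, check it is well defined, prove full faithfulness by reducing to a statement about connected components, and finally establish essential surjectivity, which is where the real content sits. First I would record that $Y\mapsto Y(\mathbb{C})$, which I abbreviate $Y\mapsto Y^{an}$, is functorial and lands in finite topological coverings of $X^{an}$: a finite morphism of finite type $\mathbb{C}$-schemes analytifies to a proper morphism with finite fibres, an \'etale morphism analytifies to a local isomorphism of analytic spaces, and a finite morphism that is also a local homeomorphism is a finite covering space. Thus the comparison functor $FEt/X \to \{\text{finite covers of }X^{an}\}$ is defined, and it remains to see it is an equivalence.

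For full faithfulness, given $Y_1,Y_2\in FEt/X$ I would identify an $X$-morphism $Y_1\to Y_2$ with its graph, an open and closed subscheme of $W:=Y_1\times_X Y_2$ projecting isomorphically onto $Y_1$ (open because a section of the \'etale map $W\to Y_1$ is an open immersion, closed because $Y_2\to X$ is separated); the same description holds analytically. So it suffices to show that for every $W\in FEt/X$ the natural map $\pi_0(W^{an})\to\pi_0(W)$ is a bijection, i.e.\ that an irreducible variety over $\mathbb{C}$ has connected analytification. I would prove this by reducing along the normalization, and then along chains of irreducible curves through a pair of points, to the classical fact that an irreducible algebraic curve over $\mathbb{C}$ --- equivalently the compact Riemann surface attached to a transcendence-degree-one function field --- is connected.

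Essential surjectivity is the substantial part: from a finite topological covering $Y'\to X^{an}$ one must manufacture $Y\in FEt/X$ with $Y^{an}\cong Y'$. The steps I would carry out: (i) reduce to $X$ a normal quasi-projective variety (covering $X$ by affine opens and gluing the resulting covers via the full faithfulness just proved, together with a d\'evissage along normalization to reach the normal case), and fix a normal projective compactification $X\hookrightarrow\bar X$ with boundary $Z=\bar X\setminus X$; (ii) extend $Y'\to X^{an}$ to a finite \emph{branched} covering $\bar Y'\to\bar X^{an}$ with $\bar Y'$ normal, the local models along the divisorial part of $Z$ being disjoint unions of the maps $z\mapsto z^e$ and the extension across the locus of codimension $\ge 2$ being automatic --- this is the analytic extension (Grauert--Remmert / Riemann extension) input; (iii) apply Serre's GAGA comparison to the finite morphism $\bar Y'\to\bar X^{an}$ of projective analytic spaces: the coherent algebra $\pi_*\mathcal{O}_{\bar Y'}$ is the analytification of a coherent algebra on the projective variety $\bar X$, so $\bar Y'$ arises from a finite morphism of projective varieties $\bar Y\to\bar X$; (iv) set $Y:=\bar Y\times_{\bar X} X$, which is finite over $X$, analytifies to $Y'$, and is \'etale because \'etaleness of a morphism of finite type $\mathbb{C}$-schemes can be tested on analytifications.

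The hard part will be steps (ii) and (iii): extending a purely topological covering to a branched covering of a projective model as an \emph{analytic} object, and then invoking GAGA to recognize that analytic object as algebraic. By contrast, the well-definedness of the functor and full faithfulness are comparatively soft; what carries the content is producing, out of topological data alone, a coherent sheaf of algebras to which the algebraic comparison theorems can be applied.
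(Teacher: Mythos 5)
Your proposal is correct and follows the same route the paper relies on: the paper's ``proof'' is only a citation, with the projective/complete case handled by Serre's GAGA \cite{GAGA} and the general case by the Grauert--Remmert extension theorem \cite{GR}, and your steps (ii)--(iii) are exactly these two inputs, surrounded by the standard soft reductions (analytification functor, full faithfulness via $\pi_0$, passage to a normal projective compactification) as in SGA1, Exp.\ XII. So you have essentially unpacked the references the paper cites rather than taken a different path.
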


\begin{proof}
The case of $X(\mathbb{C})$ complete can be found in \cite{GAGA} and the general case in \cite{GR}.
\end{proof}

\begin{cor}\label{E:RETC}
Let $X$ be a connected scheme over $\mathbb{C}$, then
$\pi_1(X)\cong\hat{\pi}_1^{top}(X(\mathbb{C}))$. Where $\hat{}$ here means profinite completion.
\end{cor}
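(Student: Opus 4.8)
The plan is to deduce this from Grothendieck's categorical characterization of $\pi_1$ together with the Riemann existence theorem stated just above. Recall that, by construction, $\pi_1(X,\bar x)$ is a profinite group and the fiber functor $F$ establishes an equivalence between $FEt/X$ and the category $\pi_1(X,\bar x)\text{-}\mathbf{Set}_{\mathrm{fin}}$ of finite sets equipped with a continuous action of $\pi_1(X,\bar x)$; moreover $\pi_1(X,\bar x)$ is recovered, up to canonical isomorphism, as the automorphism group of the forgetful functor $\pi_1(X,\bar x)\text{-}\mathbf{Set}_{\mathrm{fin}}\to\mathbf{Set}_{\mathrm{fin}}$. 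So it suffices to produce an equivalence of categories $FEt/X\simeq\hat\pi_1^{top}(X(\mathbb{C}))\text{-}\mathbf{Set}_{\mathrm{fin}}$ compatible with the two forgetful functors, and then invoke this reconstruction.

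First I would pass to the analytic side. Fix a point $x\in X(\mathbb{C})$ lying over $\bar x$. The space $X(\mathbb{C})$ is a complex analytic space, hence locally contractible, in particular locally path-connected and semi-locally simply connected; therefore classical covering-space theory applies, and taking the fiber over $x$ gives an equivalence between the category of finite covering spaces of $X(\mathbb{C})$ and the category of finite sets with a continuous $\pi_1^{top}(X(\mathbb{C}),x)$-action. Since every continuous action of $\pi_1^{top}$ on a finite discrete set factors through a finite quotient, such actions are exactly the continuous actions of the profinite completion $\hat\pi_1^{top}(X(\mathbb{C}),x)$; thus finite covering spaces of $X(\mathbb{C})$ are equivalent to $\hat\pi_1^{top}(X(\mathbb{C}),x)\text{-}\mathbf{Set}_{\mathrm{fin}}$, compatibly with the fiber over $x$.

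Next I would feed in the Riemann existence theorem, which provides an equivalence of categories $FEt/X\simeq\{\text{finite covering spaces of }X(\mathbb{C})\}$, $Y\mapsto Y(\mathbb{C})$, identifying the fiber of $Y$ over $\bar x$ with the fiber of $Y(\mathbb{C})$ over $x$. Composing the three equivalences yields an equivalence $\pi_1(X,\bar x)\text{-}\mathbf{Set}_{\mathrm{fin}}\simeq\hat\pi_1^{top}(X(\mathbb{C}),x)\text{-}\mathbf{Set}_{\mathrm{fin}}$ under which the two forgetful functors to $\mathbf{Set}_{\mathrm{fin}}$ correspond. Taking automorphism groups of these forgetful functors and using the reconstruction statement recalled above gives the desired isomorphism $\pi_1(X)\cong\hat\pi_1^{top}(X(\mathbb{C}))$ (canonical up to inner automorphism, matching the base-point dependence of either side).

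The only genuinely non-formal input besides the Riemann existence theorem (already granted) is the assertion that a profinite group $G$ is recovered as the automorphism group of the forgetful functor on $G\text{-}\mathbf{Set}_{\mathrm{fin}}$; this is the profinite analogue of the fact that a group acts faithfully on itself by translation, together with the observation that an automorphism of the forgetful functor is determined by its values on the $G$-sets $G/N$, $N$ ranging over open normal subgroups, and is compatible with the transition maps, hence corresponds to an element of $\varprojlim_N G/N=G$. The main point to watch is the compatibility of all the equivalences with the chosen base points, so that the reconstructed groups are genuinely matched; the local topological tameness of $X(\mathbb{C})$ needed for covering-space theory is automatic and causes no difficulty.
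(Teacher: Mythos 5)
Your argument is correct, but it formalizes the passage from ``the categories of finite coverings agree'' to ``the groups agree'' differently from the paper. The paper's proof works directly with the definition $\pi_1(X)=\varprojlim_i \mathrm{Aut}_X(X_i)$: finite quotients of $\pi_1^{top}(X(\mathbb{C}))$ are the Galois groups of finite topological Galois coverings, these coverings are algebraizable by the Riemann existence theorem, their automorphism groups match those of the corresponding finite \'etale Galois coverings, and taking the inverse limit over the (cofinal) Galois objects identifies $\pi_1(X)$ with $\varprojlim$ of the finite quotients of $\pi_1^{top}$, i.e.\ the profinite completion. You instead invoke the full Galois-category package: the equivalence $FEt/X\simeq\pi_1(X,\bar x)\text{-}\mathbf{Set}_{\mathrm{fin}}$ via the fiber functor, covering-space theory identifying finite covers of $X(\mathbb{C})$ with finite continuous $\hat\pi_1^{top}$-sets, and the reconstruction of a profinite group as the automorphism group of the forgetful functor on its finite continuous sets. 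Both routes rest on the same nonformal input (Riemann existence plus the fact that both groups are determined by finite coverings), but yours requires two further standard facts not stated in the paper --- the equivalence of $FEt/X$ with finite $\pi_1$-sets and the reconstruction lemma, both in Murre/SGA~1 --- in exchange for which you get a cleaner treatment of base points and functoriality than the paper's rather terse matching of inverse systems. One small point worth making explicit in your version: to run covering-space theory and to have a single $\pi_1^{top}$ you need $X(\mathbb{C})$ connected and locally nice, which holds because $X$ is (implicitly, locally of finite type and) connected over $\mathbb{C}$, so its analytification is connected and locally contractible; the paper glosses over this as well.
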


\begin{proof}
Immediate from the Riemann existence theorem. The finite quotients of $\pi_1^{top}(X(\mathbb{C}))$ are provided by Galois groups of finite extensions, but finite extensions are algebraizable by the existence theorem. And the \'etale fundamental group is exactly the inverse limit of the corresponding groups, i.e. the profinite
completion.
\end{proof}

This result helps us calculate the fundamental groups of curves over
$\mathbb{C}$; the following will give us the result for curves over a field $k$ of
characteristic 0:

\begin{thm}
Let $k\subseteq k'$ be an extension of separably closed fields. $X\rightarrow Spec\,k$ a proper connected scheme. Then the induced map $\pi_1(X\underset{Spec\,k}{\times}Spec\,k')\xrightarrow{\approx} \pi_1(X)$ is an isomorphism.
\end{thm}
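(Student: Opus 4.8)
Write $f\colon X_{k'}:=X\times_{\operatorname{Spec}k}\operatorname{Spec}k'\to X$ for the projection; the plan is to show that $f_*\colon\pi_1(X_{k'})\to\pi_1(X)$ is both surjective and injective, after a preliminary reduction. Since $k,k'$ are separably closed, their algebraic closures $\bar k,\bar{k'}$ are purely inseparable over them, so $\operatorname{Spec}\bar k\to\operatorname{Spec}k$, $\operatorname{Spec}\bar{k'}\to\operatorname{Spec}k'$, and hence the base changes $X_{\bar k}\to X$, $X_{\bar{k'}}\to X_{k'}$, as well as $X_{\mathrm{red}}\to X$, are universal homeomorphisms; by the topological invariance of the \'etale fundamental group under universal homeomorphisms (see \cite{Mu1}, or SGA~1, Exp.~VIII) each of these induces an isomorphism on $\pi_1$, and $X_{\bar{k'}}=(X_{\bar k})_{\bar{k'}}$. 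Each such base change stays connected, because a proper connected scheme over a separably closed field $F$ has $H^0$ of its structure sheaf a finite local $F$-algebra with purely inseparable residue field, and such an algebra remains local after any field extension. So we may assume $k\subseteq k'$ are algebraically closed and $X$ reduced; then $X\to\operatorname{Spec}k$ is proper, flat, with $X$ geometrically connected and geometrically reduced, $H^0(X,\mathcal O_X)=k$, and $X$ has a $k$-rational point.

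Surjectivity of $f_*$ is the easy half: by Proposition~\re{Sur} it suffices that for every connected finite \'etale covering $X'\to X$ the base change $X'\times_XX_{k'}=X'_{k'}$ be connected, and this holds since $X'$ is again proper connected over the algebraically closed $k$, so $H^0(X',\mathcal O_{X'})=k$ and $H^0(X'_{k'},\mathcal O)=k'$.

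For injectivity --- indeed for the whole theorem at once --- I would invoke the homotopy (``fundamental'') exact sequence for proper morphisms from SGA~1, Exp.~X: if $g\colon Y\to S$ is proper, $g_*\mathcal O_Y=\mathcal O_S$ holds universally (e.g.\ $g$ is flat with geometrically connected and geometrically reduced fibres), and $g$ has a section, then for every geometric point $\bar s$ of $S$ the sequence
\[ 1\longrightarrow\pi_1(Y_{\bar s})\longrightarrow\pi_1(Y)\longrightarrow\pi_1(S)\longrightarrow1 \]
is exact. Applying this to $g=(X\to\operatorname{Spec}k)$ --- which has the required properties by the reduction above --- at the geometric point $\bar s=(\operatorname{Spec}k'\to\operatorname{Spec}k)$, a legitimate geometric point precisely because $k'$ is separably closed, the fibre $Y_{\bar s}$ is $X_{k'}$, the map $\pi_1(Y_{\bar s})\to\pi_1(Y)$ is $f_*$, and $\pi_1(S)=\pi_1(\operatorname{Spec}k)$ is trivial; hence $f_*$ is an isomorphism.

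Thus the real content, and the step I expect to be the main obstacle, is the homotopy exact sequence of SGA~1, Exp.~X --- a proper-base-change type statement for the \'etale fundamental group; verifying its hypotheses for $X\to\operatorname{Spec}k$ is routine once $X$ has been made reduced over the perfect field $k$, but one cannot avoid this input, and without it only the elementary surjectivity is available. (If one prefers the exact sequence in its more familiar form with a positive-dimensional base: descend a given connected finite \'etale $Z\to X_{k'}$, via $k'=\varinjlim A_\alpha$ over its finitely generated $k$-subalgebras, to a connected finite \'etale $Z_\alpha\to X\times_k\operatorname{Spec}A_\alpha$ over a variety $V_\alpha/k$, and apply the exact sequence to the constant family $X\times_kV_\alpha\to V_\alpha$ at a $k$-point $v_0\in V_\alpha(k)$ and at $\operatorname{Spec}k'\to V_\alpha$; since both fibre inclusions have image the ``geometric part'' $\ker(\pi_1(X\times_kV_\alpha)\to\pi_1(V_\alpha))$, one gets $Z\cong(Z_\alpha)_{v_0}\times_kk'$, exhibiting $Z$ as pulled back from $X$.)
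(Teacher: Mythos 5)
Your preliminary reduction (universal homeomorphisms to pass to algebraically closed fields and to $X_{\mathrm{red}}$, preservation of connectedness, and the surjectivity of $f_*$ via Proposition \re{Sur} and $H^0(X',\mathcal O_{X'})=k$) is fine; note the paper itself offers no proof here, only a citation of Murre. The decisive step of your main argument, however, does not exist as stated: you invoke a homotopy exact sequence with a ``$1\rightarrow$'' on the left, claiming that for $g$ proper, cohomologically flat, with a section, the sequence $1\rightarrow\pi_1(Y_{\bar s})\rightarrow\pi_1(Y)\rightarrow\pi_1(S)\rightarrow 1$ is exact. SGA~1, Exp.~X (and Theorem \re{FHES} of this paper) give only $\pi_1(Y_{\bar s})\rightarrow\pi_1(Y)\rightarrow\pi_1(S)\rightarrow 1$, exact at the middle and right terms; left exactness is false in this generality (for a degenerating proper flat family of curves with a section, the vanishing cycle generates $\pi_1$ of a nearby fibre but dies in the total space). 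Worse, in the one case where you apply it, $S=\operatorname{Spec}k$ has trivial $\pi_1$, so the claimed left exactness \emph{is} the theorem being proved, while the genuine exact sequence degenerates to the surjectivity you had already established. So the main line of the proof is circular exactly at the point you yourself flag as ``the real content.''

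Your parenthetical limit argument is the correct route (it is essentially SGA~1, X.1.8): descend $Z$ to a finite \'etale $Z_\alpha\rightarrow X\times_k V_\alpha$ and compare the fibres at $v_0\in V_\alpha(k)$ and at $\xi:\operatorname{Spec}k'\rightarrow V_\alpha$. But the inference ``both fibre inclusions have image $K=\ker(\pi_1(X\times_k V_\alpha)\rightarrow\pi_1(V_\alpha))$, hence $Z\cong((Z_\alpha)_{v_0})_{k'}$'' is incomplete: covers pulled back along the two fibre inclusions are classified by the two homomorphisms into $K$, not merely by their common image, so equal images alone identify nothing. The missing (standard) ingredient is the retraction $\mathrm{pr}_X:X\times_k V_\alpha\rightarrow X$: since $\mathrm{pr}_X\circ\iota_{v_0}=\mathrm{id}_X$ and $\mathrm{pr}_X\circ(\mathrm{id}\times\xi)=f$, the homotopy sequence for the constant family (which needs $X$ geometrically reduced --- your reduction supplies this) shows $\mathrm{pr}_{X*}|_K$ is an isomorphism onto $\pi_1(X)$ inverse to $\iota_{v_0*}$, and, because $(\mathrm{id}\times\xi)_*$ lands in $K$ (as $\pi_1(\operatorname{Spec}k')=1$), one gets $(\mathrm{id}\times\xi)_*=\iota_{v_0*}\circ f_*$. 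This K\"unneth-type compatibility yields $Z\cong((Z_\alpha)_{v_0})_{k'}$, hence $\ker f_*$ is contained in every open subgroup coming from a connected cover of $X_{k'}$ and $f_*$ is injective. In short: promote the parenthetical to the main argument and add the retraction step, and you have a complete proof; as written, the argument rests on a left-exactness statement that is both unavailable and, in the case used, equivalent to the theorem.
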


\begin{proof}
\cite[Chapter VII, Proposition 7.3.2]{Mu1}.
\end{proof}

\begin{exmpl}\label{E:ASC}
Assume $Char(k)=0$ then the affine space $\mathbb{A}_k^n$ is simply connected.
\end{exmpl}

\begin{proof}
The fact that $\pi_1^{top}(\mathbb{A}_{\mathbb{C}}^n(\mathbb{C}))=\pi_1^{top}(\mathbb{C}^n)={1}$ is well known, by corollary \re{RETC} from the Riemann existence theorem $\pi_1(\mathbb{A}_{\mathbb{C}}^n)=1$. Considering the inclusion $\bar{\mathbb{Q}}\hookrightarrow\mathbb{C}$, we obtain by the above theorem $\pi_1(\mathbb{A}_{\bar{\mathbb{Q}}}^n)=1$. Since any separably closed field of characteristic 0 contains a copy of $\bar{\mathbb{Q}}$, we obtain an inclusion $\bar{\mathbb{Q}}\hookrightarrow k $, using the last theorem again we conclude
 that $\pi_1(\mathbb{A}_k^n)=1$.
\end{proof}

\begin{exmpl}\label{E:APP}
Assume $Char(k)=p$ then the fundamental group $\pi_1(\mathbb{A}_k^n)^{ab}$ is a pro-$p$ group.
\end{exmpl}

\begin{proof}
We can show it by induction on $n$.

The case $n=1$ follows from \cite[XIII, Corollary 2.12]{SGAI}, which calculates more generally the non-$p$ part of the abelianization of the fundamental group in the affine curve case. It also follows from the Abhyankar's conjecture, which gives the finite quotients of the fundamental group of affine curves, and was proved for the affine line in \cite{Ra} and in general in \cite{Harb}.

Assume that for $n-1$ the fundamental group is a pro-$p$ group. By a basic property of the fundamental functor (see \cite{Mu1}), the fundamental group of a fiber product is given by $\pi_1(X\underset{Z}{\times}Y)\cong \pi_1(X)\underset{\pi_1(Z)}{\times}{\pi_1(Y)}$  therefore $\pi_1(\mathbb{A}_k^n)\cong \pi_1(\mathbb{A}_k^{n-1})\times \pi_1(\mathbb{A}_k^{1})$ so $\pi_1(\mathbb{A}_k^n)^{ab}\cong \pi_1(\mathbb{A}_k^{n-1})^{ab}\times \pi_1(\mathbb{A}_k^{1})^{ab}$ is a product of pro-$p$ group by the induction hypothesis, and therefore a pro-$p$ group.
\end{proof}

\begin{rem}
In the case $Char(k)=p$, the fundamental group $\pi_1(\mathbb{A}_k)^{ab}$ is non trivial, since we have the Artin-Schreier extension $k[x]\hookrightarrow k[x,y]/(y^p-y-a)$ which is finite \'etale extension, for more details consult \cite{Mi}.
\end{rem}

\begin{defn}
\begin{enumerate}
\item
A scheme $X$ over $k$ is {\em separable (over $k$)} if for any field extension $K$ of $k$, $X\underset{k}{\otimes}K$  is reduced.
\item
A morphism $f:Y\rightarrow X$ is {\em separable} if $Y$ is flat over $X$ and for any point $x\in X$ the fiber $Y\underset{X}{\otimes}k(x)$ is separable over $k(x)$.
\item
A morphism $f:Y\rightarrow X$ is a {\em fibration} if $Y$ is locally for the \'etale topology a product of the form $F\times X$ for some scheme $F$.
\end{enumerate}
\end{defn}

It is known from algebraic topology that fibrations give rise to exact sequences of the homotopy groups. We state an algebraic analog to this result:

\begin{thm}[The first homotopy exact sequence]\label{E:FHES}
Assume that $f:Y\rightarrow X$ is either a proper and separable morphism or a fibration. Assume also that $X,Y, f^{-1}(x)$ are geometrically connected. Then we have an exact sequence of the fundamental groups
\begin{center}
$\pi_1(f^{-1}(x))\rightarrow\pi_1(Y)\rightarrow\pi_1(X)\rightarrow1$.
\end{center}
\end{thm}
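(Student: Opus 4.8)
The plan is to reduce the statement to the standard properties of the fiber functor that underlie the definition of $\pi_1$, rather than to reprove it from scratch; since the paper cites \cite{Mu1} freely for foundational facts about $\pi_1$, I would treat this as an application of those foundations. Fix the geometric point $x$ of $X$ lying under a chosen geometric point $y$ of $Y$ (which exists because $f^{-1}(x)$ is geometrically connected, hence nonempty), and let $\bar{z}$ be a geometric point of $F := f^{-1}(x)$ mapping to $y$. Surjectivity of $\pi_1(Y)\to\pi_1(X)$ is the first step: by Proposition~\ref{E:Sur} it suffices to show that for every connected finite \'etale covering $X'\to X$, the base change $X'\times_X Y$ is connected. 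This is where the two hypotheses on $f$ enter. In the fibration case, $X'\times_X Y\to Y$ is again a fibration with geometrically connected fiber $F$, and connectedness follows from the fact that $Y$ is connected together with the local triviality; more cleanly, one invokes the Stein-factorization/descent argument showing a proper separable (or fibration) morphism with geometrically connected fibers has geometrically connected total space after any base change along $X'\to X$ — this is precisely the content cited from \cite{Mu1}. In the proper separable case one uses that $f_*\mathcal{O}_Y = \mathcal{O}_X$ is preserved under the flat base change $X'\to X$, so $X'\times_X Y$ has connected fibers over the connected $X'$ and is therefore connected.

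The second step is to identify the kernel of $\pi_1(Y)\to\pi_1(X)$ with the image of $\pi_1(F)$. Exactness at $\pi_1(Y)$ means two things. Composition is trivial: the composite $F\hookrightarrow Y\to X$ factors through the single geometric point $x$, so on fundamental groups it is the zero map, giving $\mathrm{Im}(\pi_1(F)\to\pi_1(Y))\subseteq\ker(\pi_1(Y)\to\pi_1(X))$. For the reverse inclusion, take $\alpha\in\pi_1(Y)$ mapping to $1$ in $\pi_1(X)$; I must produce a preimage in $\pi_1(F)$. The idea is: an element of $\ker(\pi_1(Y)\to\pi_1(X))$ acts trivially on every covering of $Y$ pulled back from $X$, so it "lives" on the coverings of $Y$ that are not pulled back from $X$ — and by the surjectivity just proved together with the classification of connected components, every connected finite \'etale $Y'\to Y$ becomes, after a further cover, dominated by something of the form $(X'\times_X Y)$ glued with a cover of the fiber $F$. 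Concretely, one shows that the coverings of $Y$ obtained by base change from $X$, together with a cofinal family of coverings of $F$, generate the category $FEt/Y$ in the sense needed; then an automorphism killing the first family is determined by its restriction to the second, i.e.\ comes from $\pi_1(F)$.

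The step I expect to be the genuine obstacle is exactness at $\pi_1(Y)$, and within it the surjectivity onto the kernel — equivalently, the statement that a finite \'etale cover of $F$ that happens to be the restriction of a finite \'etale cover of $Y$ extends, after passing through a cover of $Y$ pulled back from $X$, in a way compatible with monodromy. This is the place where properness and separability (or the fibration hypothesis) are really used: they guarantee, via proper base change for the \'etale site (or via local triviality in the fibration case), that $\pi_1(F)\to\pi_1(Y)$ detects all the "vertical" monodromy and that nothing new is created by the total space. For the write-up I would either (a) cite the precise statement from \cite[Ch.~X]{Mu1} and specialize it, noting that our hypotheses are exactly those under which it is proved, or (b) if a self-contained argument is wanted, run the proper base change comparison $\pi_1(F)\xrightarrow{\sim}\varprojlim \pi_1(X'\times_X\{x\})$ along the tower of $X'\to X$ and chase the resulting diagram. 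In either case the surjectivity of $\pi_1(Y)\to\pi_1(X)$ is the easy input via Proposition~\ref{E:Sur}, and exactness at $\pi_1(X)$ (i.e.\ surjectivity there) is what makes the sequence end in $1$.
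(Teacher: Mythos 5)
The routine parts of your plan are fine: surjectivity of $\pi_1(Y)\rightarrow\pi_1(X)$ via Proposition \ref{E:Sur} once one knows $X'\underset{X}{\times}Y$ is connected for every connected finite \'etale $X'\rightarrow X$, and the vanishing of the composite because $f^{-1}(x)\rightarrow X$ factors through the geometric point $x$. The genuine gap is exactly the step you flag yourself: exactness at $\pi_1(Y)$, i.e.\ that the kernel of $\pi_1(Y)\rightarrow\pi_1(X)$ is contained in the image of $\pi_1(f^{-1}(x))$. Your sketch --- that coverings pulled back from $X$ together with a cofinal family of coverings of the fiber ``generate'' $FEt/Y$ --- is not an argument but a restatement of the theorem, and the two devices you propose for closing it do not cover both hypotheses of the statement. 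Murre's homotopy sequence and the proper base change comparison are proved for proper separable morphisms; a fibration in the sense of this paper (\'etale-locally a product) need not be proper, so in the fibration case neither citation applies and your write-up would be left with no proof of the crucial inclusion. (Your surjectivity argument in the proper separable case via $f_*\mathcal{O}_Y=\mathcal{O}_X$ also silently upgrades ``the fiber $f^{-1}(x)$ is geometrically connected'' to ``all fibers are,'' which is how the statement is meant but should be said.)

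For comparison, the paper splits the two cases differently and this is worth imitating. For the proper separable case it simply cites \cite[Chapter VI, 6.3]{Mu1}, so your option (a) is exactly what is done there. For the fibration case it gives a short self-contained argument that avoids any ``generation'' claim: base change along the pro-\'etale universal covering $p:\widetilde{X}\rightarrow X$; a fibration over a simply connected base is trivial, so $\widetilde{X}\underset{X}{\times}Y\cong F\times\widetilde{X}$ and hence $\pi_1(\widetilde{X}\underset{X}{\times}Y)\cong\pi_1(F)$; on the other hand $\widetilde{X}\underset{X}{\times}Y\rightarrow Y$ is a connected pro-\'etale Galois covering with group $\pi_1(X)$, so $\pi_1(\widetilde{X}\underset{X}{\times}Y)$ sits inside $\pi_1(Y)$ precisely as the kernel of $\pi_1(Y)\rightarrow\pi_1(X)$, with quotient $\pi_1(X)$. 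This yields surjectivity and exactness at $\pi_1(Y)$ in one stroke, using the fibration hypothesis exactly where it is needed. If you replace your second step by this argument (keeping the citation for the proper separable case), the proof is complete; as it stands, the fibration half is unproved.
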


\begin{proof}
For the proper separable case see \cite[Chapter VI, 6.3]{Mu1}. For the case of fibration let $p:\widetilde{X}\rightarrow X$ denotes the universal covering of $X$, then we obtain a fibration with base $\widetilde{X}$:

\begin{center}
$\xymatrix{
F\ar[r]\ar@{=}[d]&
\widetilde{X}\underset{X}{\times}Y\ar[r]^{id\underset{X}{\times}f}\ar[d]&
\widetilde{X}\ar[d]^{p}
\\
F\ar[r]&
Y\ar[r]^{f}&
X
}$
\end{center}

But a fibration over a simply connected scheme is trivial, therefore $\pi_1(F)\cong \pi_1(\widetilde{X}\underset{X}{\times}Y)$. By an elementary property of the fundamental group, for a (pro-\'etale) covering $\widetilde{X}\underset{X}{\times}Y\rightarrow Y$ the map induced on the fundamental groups $\pi_1(\widetilde{X}\underset{X}{\times}Y)\rightarrow \pi_1(Y)$ makes the former a subgroup of the latter. The quotient here is the group of automorphisms of the covering $\widetilde{X}\underset{X}{\times}Y\rightarrow Y$ which is exactly the fundamental group of $X$, this is precisely the statement that there exist an exact sequence as required:
\begin{center}
$\pi_1(F)\rightarrow\pi_1(Y)\rightarrow\pi_1(X)\rightarrow1$.
\end{center}
\end{proof}

In the case of open immersions, we can say something about the map induced on the fundamental groups:

\begin{prop}\label{E:FGSM}
If $U,X$ are connected normal varieties, $i:U\hookrightarrow X$ is an inclusion of an open subscheme, then the induced map: $i_*:\pi_1(U)\rightarrow \pi_1(X)$ is surjective.
\end{prop}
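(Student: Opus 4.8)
The plan is to deduce this from the connectedness criterion of Proposition \re{Sur}. By that proposition, $i_*:\pi_1(U)\to\pi_1(X)$ is surjective if and only if for every connected finite \'etale covering $\pi:X'\to X$ the base change $X'\underset{X}{\times}U$ is connected. So I would fix such a covering and reduce the whole statement to showing that $X'\underset{X}{\times}U$ is connected.

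First I would identify $X'\underset{X}{\times}U$ with the open subscheme $\pi^{-1}(U)\subseteq X'$. Then I would argue that $X'$ is irreducible. Indeed, $\pi$ is \'etale and $X$ is normal, so $X'$ is again normal, since normality is preserved under \'etale morphisms (the local rings of $X'$ are \'etale-local over those of $X$, hence normal). A normal scheme has the property that its connected components coincide with its irreducible components; as $X'$ is connected, it is therefore irreducible.

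Next I would check that $\pi^{-1}(U)$ is nonempty. A finite \'etale morphism is both open (being flat and locally of finite presentation) and closed (being finite), so $\pi(X')$ is a clopen subset of $X$; since $X'$ is connected, in particular nonempty, and $X$ is connected, $\pi$ is surjective. As $U$ is a nonempty open subset of $X$, this gives $\pi^{-1}(U)\neq\emptyset$. A nonempty open subset of an irreducible topological space is irreducible, hence connected; therefore $X'\underset{X}{\times}U\cong\pi^{-1}(U)$ is connected, which is exactly what Proposition \re{Sur} requires.

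The only mildly delicate ingredients are the two standard facts about normal schemes invoked above, namely that a scheme \'etale over a normal scheme is normal and that a connected normal scheme is irreducible; the rest of the argument is a formal manipulation of the covering-space criterion, and I do not expect any genuine obstacle.
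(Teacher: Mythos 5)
Your argument is correct, but it differs from what the paper does: the paper gives no argument at all for this proposition and simply cites \cite[IX, Corollary 5.6]{SGAI}, whereas you derive it from the covering-space criterion of Proposition \re{Sur} (which the paper does state, citing Murre). Your route is the standard direct proof: base-changing a connected finite \'etale cover $\pi:X'\to X$ along $U\hookrightarrow X$ gives the open subscheme $\pi^{-1}(U)\subseteq X'$; normality ascends along \'etale morphisms, a connected normal (Noetherian) scheme is irreducible, finiteness plus flatness make $\pi$ clopen and hence surjective onto the connected base, so $\pi^{-1}(U)$ is a nonempty open subset of an irreducible scheme and therefore connected, which is exactly what Proposition \re{Sur} demands. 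What your approach buys is a self-contained proof within the toolkit the paper has already set up, at the cost of invoking the two standard facts about normality you flag (both are genuinely needed: without normality of $X$ the statement fails, e.g.\ for a nodal curve minus a preimage point of the node, so it is right that normality enters through the irreducibility of $X'$); what the paper's citation buys is access to the more general SGA1 statement without reproving these facts. One small point of care: in the criterion one should take the connected cover $X'$ to be nonempty (as is implicit in ``connected''), since otherwise the fiber product is empty; your surjectivity step already handles this.
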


\begin{proof}
\cite[IX, Corollary 5.6]{SGAI}
\end{proof}

We end this section by claiming that the fundamental group is independent of changing a "small" subscheme, the exact result
is this:

\begin{thm}\label{E:SSC2}
Let $X$ be a locally Noetherian regular scheme, $U\subseteq X$ an open subscheme and assume $X-U$ is of codimension $\geq2$. Then the morphism induced by inclusion on the fundamental groups $\pi_1(U,x_0)\xrightarrow{\approx}\pi_1(X,x_0)$ is an isomorphism.
\end{thm}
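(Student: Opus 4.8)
The plan is to reduce the theorem to the assertion that the base-change functor $FEt/X \to FEt/U$, $V' \mapsto V' \times_X U$, is an equivalence of categories compatible with the fibre functors at a geometric point $x_0 \in U$; since $\pi_1$ of a connected scheme is, by the definition adopted above, the automorphism group of the universal covering pro-representing its fibre functor, such a compatible equivalence identifies the universal coverings of $U$ and of $X$ and hence their automorphism groups, giving the isomorphism $i_* \colon \pi_1(U,x_0) \xrightarrow{\approx} \pi_1(X,x_0)$. We may assume $X$ connected; then, $X$ being regular, its local rings are domains, so $X$ is irreducible, $U$ (a dense open, as $X-U$ has codimension $\geq 1$) is integral, and the same holds for every connected finite \'etale cover of $X$, which is again regular.

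First I would check that $i_*$ is surjective: $X$ is regular, hence normal, so this is Proposition \re{FGSM}; equivalently, for a connected finite \'etale cover $X' \to X$ the scheme $X'$ is irreducible and $X' \times_X U$ is a dense open in it, hence connected, so Proposition \re{Sur} applies. Next, full faithfulness of $FEt/X \to FEt/U$ is formal from scheme-theoretic density of $U$: a morphism of finite \'etale $X$-schemes is a section of a separated morphism, and two sections agreeing on the dense open $U$ coincide, while a $U$-morphism $V_1|_U \to V_2|_U$ has graph an open-and-closed subscheme of $(V_1 \times_X V_2)|_U$ whose closure in the finite \'etale $X$-scheme $V_1 \times_X V_2$ is again open and closed and supplies the extending morphism.

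The substantive point is essential surjectivity: every finite \'etale $V \to U$ must be the restriction of a finite \'etale cover of $X$. Decomposing into connected components, assume $V$ connected, hence integral, with function field a finite separable extension $L/K$, where $K = K(X)$. Let $\bar V$ be the normalization of $X$ in $L$; granting that $X$ is excellent, as it is in all our applications (finite type over a field), $\pi \colon \bar V \to X$ is finite, $\bar V$ is normal, and $\bar V|_U = V$ because $V$, being \'etale over the normal scheme $U$, is itself normal. Since $X-U$ has codimension $\geq 2$, every codimension-one point of $X$ lies in $U$, where $\pi$ is \'etale, so $\pi$ is unramified in codimension one; Zariski--Nagata purity of the branch locus then forces $\pi$ to be \'etale everywhere. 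Thus $\bar V \to X$ is the desired cover, the functor is an equivalence, and the theorem follows.

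The main obstacle is precisely this last step: the existence of an \emph{\'etale} extension $\bar V \to X$ is the content of the purity theorem, a genuinely non-formal input (along with finiteness of the normalization of an excellent scheme) that must be quoted rather than derived from the material above. Everything else --- the reduction via the defining property of $\pi_1$, surjectivity, and full faithfulness --- is routine, of the same flavour as the normality arguments already used for Proposition \re{FGSM}.
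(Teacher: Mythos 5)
Your argument is correct and follows essentially the route of the source the paper cites for this statement (SGA~1, Exp.~X, Cor.~3.3): the restriction functor $FEt/X\rightarrow FEt/U$ is an equivalence, with surjectivity and full faithfulness being the routine normality/density arguments and essential surjectivity obtained by normalizing $X$ in the function field of the cover and invoking the Zariski--Nagata purity theorem, which is exactly how the cited corollary is deduced there. One small remark: your appeal to excellence is unnecessary, since for a Noetherian normal domain the integral closure in a finite \emph{separable} extension of its fraction field is already a finite module (via the trace form), so your proof covers the general locally Noetherian regular case as stated and not only schemes of finite type over a field.
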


\begin{proof}
\cite[X, Corollary 3.3]{SGAI}
\end{proof}

\section{Grothendieck's sheaf-function correspondence}\label{S:GSFC}

\subsection{Representations and local systems}

Let $X$ be a connected scheme, $\bar{x}$ a geometric point. $X_{et}$ is the \'etale site on $X$ and $Sh(X_{et})$ the category of sheaves on the \'etale site.

The next step to make our statement of class field theory more geometric is the equivalence between representations of the fundamental group and local systems. Note that in this section a reference with topological meaning refers to the \'etale topology. For example:

\begin{defn}
\begin{enumerate}
\item
A {\em local system} $\mathcal{F}\in Sh(X_{et})$ on $X$ is a locally constant sheaf of abelian groups on $X$. Here, "locally constant" means that there exist an \'etale covering $\{\iota_i:U_i\rightarrow X\}_{i\in I}$ of $X$ such that $\iota_i^*\mathcal{F}$ are constant sheaves.
\item
A {\em constructible} sheaf of abelian groups $\mathcal{F}\in Sh(X_{et})$ on $X$ is a sheaf such that there exist a decomposition of $X$ into a disjoint union of (Zariski) locally closed subset $X=\coprod X_i$ such that $\mathcal{F}|_{X_i}$ is locally constant.
\end{enumerate}
\end{defn}

\begin{rem}
The results of this subsection have analogies for $\mathcal{F}$ a sheaf of $R$-modules. But for now, let $\mathcal{F}$ be a sheaf of abelian groups.
\end{rem}

The basic connection between representations of the fundamental group and local systems is:

\begin{thm}\label{E:RLSC}
The map $\mathcal{F}\mapsto\mathcal{F}_{\bar{x}}$ is an equivalence of categories between locally constant sheaves of abelian groups on $X$ with finite stalks and finite modules (or representations) of the algebraic fundamental group.
\end{thm}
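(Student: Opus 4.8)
The plan is to establish the equivalence by producing functors in both directions and checking they are mutually inverse. The essential input is the pro-representability of the fiber functor $F$ from the definition of $\pi_1(X,\bar x)$: there is a pro-object $\widetilde X = (X_i)_{i \in I}$ with the $X_i \to X$ finite \'etale Galois, such that $F(Y) = \varinjlim_i \mathrm{Hom}_X(X_i, Y)$ and $\pi_1(X,\bar x) = \varprojlim_i \mathrm{Aut}_X(X_i)$. First I would make precise the functor in the statement: given a locally constant sheaf $\mathcal F$ with finite stalks, form the stalk $\mathcal F_{\bar x}$; the point is that $\pi_1(X,\bar x)$ acts continuously on this finite set (resp.\ finite module), the action factoring through some $\mathrm{Aut}_X(X_i)$. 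Concretely, $\mathcal F$ becomes constant after pullback along some finite \'etale cover, and enlarging that cover to a Galois $X_i$ in the system, the deck group $\mathrm{Aut}_X(X_i)$ acts on the global sections of the pulled-back constant sheaf, which is canonically $\mathcal F_{\bar x}$; this defines the $\pi_1$-action, and functoriality in $\mathcal F$ is immediate.

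Next I would construct the quasi-inverse. Given a finite continuous $\pi_1(X,\bar x)$-module $M$, the action factors through $\mathrm{Aut}_X(X_i)=:G$ for some $i$; then $X_i \to X$ is a $G$-torsor, and I form the associated sheaf, i.e.\ the sheaf-theoretic quotient $(X_i \times M)/G$ (the contracted product), or equivalently the sheaf whose sections over an \'etale $U \to X$ are the $G$-equivariant maps $\mathrm{Hom}_X(U \times_X X_i \to X_i, M)$. Because $X_i \to X$ is finite \'etale, this sheaf is locally constant with stalk $M$, recovering the action. One checks this is independent of the choice of $i$ (passing to a larger index in the cofinal system gives a canonically isomorphic sheaf) and functorial in $M$.

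Then I would verify the two composites are naturally isomorphic to the identities. Starting from $M$, building the associated local system and taking its stalk at $\bar x$ returns $M$ with its action, essentially by the definition of the contracted product and the identification $F(X_i) \cong G$. Starting from $\mathcal F$, one must see that $\mathcal F$ is reconstructed from $(\mathcal F_{\bar x}, \text{action})$: pick $i$ large enough that $\mathcal F$ becomes constant on $X_i$; then both $\mathcal F$ and the sheaf associated to $\mathcal F_{\bar x}$ are locally constant, trivialized by the same cover, with matching descent data given by the $\mathrm{Aut}_X(X_i)$-action, so they agree by \'etale descent of sheaves. The compatibility of all these identifications as $i$ varies over the cofinal system of Galois covers is what makes everything canonical.

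The main obstacle is bookkeeping rather than a deep new idea: one must carefully track the continuity of the $\pi_1$-action (every finite locally constant sheaf is split by \emph{some} $X_i$, using finiteness of the stalks together with the cofinality of Galois objects), and one must handle the colimit/limit over the index set $I$ so that the constructions do not depend on the choice of trivializing cover. The module structure in the representation version is carried along formally — all constructions (stalk, contracted product, descent) are compatible with the abelian group (or $R$-module) structure — so the representation case follows from the abelian-sheaf case by equipping everything with the extra structure. A clean reference for the underlying dictionary between $\pi_1$-sets and finite \'etale covers is \cite{Mu1}, and the passage from finite \'etale covers to locally constant sheaves with finite stalks is the standard one; I would cite these for the parts I do not spell out.
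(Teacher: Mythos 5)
Your proposal is correct and follows essentially the same route as the paper's (sketched) proof: the $\pi_1(X,\bar x)$-action on $\mathcal{F}_{\bar x}$ is produced from a trivialization of $\mathcal{F}$ on a (pro-)Galois cover, and the quasi-inverse sends a representation to the sheaf of equivariant maps on that cover --- your contracted product $(X_i\times M)/G$, described via $G$-equivariant maps $U\times_X X_i\to M$, is exactly the paper's sheaf of functions $f:\widetilde{U}\to G$ with $f(g\bar z)=\rho(g)f(\bar z)$, just carried out at a finite Galois level $X_i$ instead of on the pro-object $\widetilde{X}$. Working with a single finite Galois cover splitting $\mathcal{F}$ and concluding by descent is only a presentational variant, and if anything it makes the finiteness/continuity bookkeeping more explicit than the paper's version.
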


\begin{proof}[Sketch of proof]
Let us give constructions in both directions which are inverses to one another.
Given a sheaf $\mathcal{F}$ we construct the corresponding representation of the fundamental group, we must show how $\pi_1(X,\bar{x})$ acts on $\mathcal{F}_{\bar{x}}\cong G$. Let $\widetilde{X}=(X_i)_{i\in I}$ be the universal covering of $X$, where $p_i:X_i\rightarrow X$ are Galois coverings.

The local system $\mathcal{F}$ must be trivial on the universal covering because it is simply connected: it has no nontrivial \'etale coverings to trivialize the local system, but we know that the local system can be trivialize using some covering (it is locally constant). So write $p:\widetilde{X}\rightarrow X$ (this map is only "pro-\'etale", because it is not necessarily locally of finite type) and choose an isomorphism $\alpha:p^*\mathcal{F}\xrightarrow{\approx} G$ where $G$ is the constant sheaf with stalk $G$ ($G$ an abelian group). This isomorphism is determined uniquely up to a choice of isomorphism of groups $(p^*\mathcal{F})_{\bar{z}}\xrightarrow{\approx} G$ for some $\bar{z}$ over $\bar{x}$. Note that $\alpha$ defines isomorphism of groups $\alpha_{\bar{z}'}:(p^*\mathcal{F})_{\bar{z}'}\xrightarrow{\approx} G$ for any point $\bar{z}'$ of $\widetilde{X}$.

Let $g\in \pi_1(X,\bar{x})$, and let $\bar{z}'=g\bar{z}$ be two points of the universal cover over $\bar{x}$. Then we can define $\rho(g)$ to be the $G$-automorphism  $\alpha_{\bar{z}}\circ g_{\bar{z},\bar{z}'}\circ\alpha_{\bar{z}'}^{-1}\in Aut(G)$ where $g_{\bar{z},\bar{z}'}:(p^{-1}\mathcal{F})_{\bar{z}'}\xrightarrow{\approx} (p^{-1}\mathcal{F})_{\bar{z}}$ is the map induced on the stalks by $g:\widetilde{X}\rightarrow \widetilde{X}$.

Concerning the other direction, given a representation $\rho:\pi_1(X,\bar{x})\rightarrow Aut(G)$ and an \'etale morphism $U\rightarrow X$ we can define a local system on $X$. Let $\widetilde{U}$ denotes the product $\widetilde{X}\underset{X}{\times}U$, $\mathcal{F}(U)$ will be constructed as the set of all functions $f:\widetilde{U}\rightarrow G$ satisfying $f(g\bar{z})=\rho(g)f(\bar{z})$ where $g\in \pi_1(X,\bar{x})$ is an element of the fundamental group which acts on the fiber of $p$, i.e on the first factor of $\widetilde{U}$.

\end{proof}

We want to characterize local systems which are pullbacks of some other local system:

\begin{defn}\label{E:ESD}
Let $G$ be an algebraic group (in our case $G$ will be finite) acting from the left on the scheme $Y$ and let
\begin{center}
$s:Y\rightarrow G\times Y, \,p_i:G\times Y\rightarrow Y, \, i=1,2, \,q_i:G\times G\times Y\rightarrow G\times Y, \,i=1,2,3$
\end{center}
defined for $y\in Y$, $g,g'\in G$ by:
\begin{center}
$s(y)=(1,y), \,p_1(g,y)=g^{-1}y, \,p_2(g,y)=y$ $q_1(g,g',y)=(g',g^{-1}y), \,q_2(g,g',y)=(gg',y), \,q_3(g,g',y)=(g,y)$
\end{center}
A $G$-equivariant sheaf on $Y$ is a sheaf $\mathcal{F}$ on $Y$ and an isomorphism $\alpha:p_2^*\mathcal{F}\xrightarrow{\approx} p_1^*\mathcal{F}$ satisfying the cocycle conditions:
\begin{center}
$s^*(\alpha)=id_{\mathcal{F}}, \,q_2^*(\alpha)=q_1^*(\alpha)\circ q_3^*(\alpha)$
\end{center}
\end{defn}

\begin{prop}\label{E:EVLS}
Let $\pi:Y\rightarrow X$ be a Galois covering with Galois group $G=Aut_X(Y)$, $\bar{x},\bar{y}$ as above. For a local system $\mathcal{F}$ on $Y$, the following are equivalent:

\begin{enumerate}
\item There exist a $G$-equivariant structure on $\mathcal{F}$, $\alpha:p_2^*\mathcal{F}\rightarrow p_1^*\mathcal{F}$.
\item $\mathcal{F}=\pi^*\mathcal{F}'$ for some local system $\mathcal{F}'\in Sh(X_{et})$.
\end{enumerate}
\end{prop}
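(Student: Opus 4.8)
The plan is to prove the equivalence by translating both conditions into statements about representations of fundamental groups, where the relationship between $\pi_1(Y)$ and $\pi_1(X)$ is transparent. Recall from Proposition \ref{E:FEC} that since $\pi\colon Y\to X$ is a finite \'etale (indeed Galois) covering, $\pi_*\colon\pi_1(Y,\bar y)\hookrightarrow\pi_1(X,\bar x)$ is an injection onto a finite-index normal subgroup $N:=\pi_1(Y,\bar y)$, with $\pi_1(X,\bar x)/N\cong G=Aut_X(Y)$. By Theorem \ref{E:RLSC} a local system $\mathcal F$ on $Y$ corresponds to a representation $\rho\colon N\to Aut(\mathcal F_{\bar y})$, and a local system $\mathcal F'$ on $X$ corresponds to a representation $\rho'\colon\pi_1(X,\bar x)\to Aut(\mathcal F'_{\bar x})$. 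Under these dictionaries, the condition (2) that $\mathcal F\cong\pi^*\mathcal F'$ says precisely that $\rho$ extends to a representation of $\pi_1(X,\bar x)$ (on the same underlying group, since $\pi^*$ induces an isomorphism on stalks $\mathcal F'_{\bar x}\xrightarrow{\approx}(\pi^*\mathcal F')_{\bar y}$). So the content of the proposition is: a $G$-equivariant structure on $\mathcal F$ is the same data as an extension of $\rho$ to $\pi_1(X,\bar x)$.

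First I would handle the direction (2)$\Rightarrow$(1): if $\mathcal F=\pi^*\mathcal F'$, then since $G$ acts on $Y$ over $X$, each $g\in G$ gives a canonical isomorphism $g^*\pi^*\mathcal F'=(\pi\circ g)^*\mathcal F'=\pi^*\mathcal F'$ (because $\pi\circ g=\pi$), and one checks these fit together into an isomorphism $\alpha\colon p_2^*\mathcal F\to p_1^*\mathcal F$ on $G\times Y$ satisfying the cocycle conditions of Definition \ref{E:ESD}; the cocycle identity $q_2^*(\alpha)=q_1^*(\alpha)\circ q_3^*(\alpha)$ is just the associativity of the $G$-action, and $s^*(\alpha)=\mathrm{id}$ records that $1\in G$ acts trivially. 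This is essentially formal.

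The substantive direction is (1)$\Rightarrow$(2): given $\alpha$, I want to descend $\mathcal F$ along $\pi$. I would argue representation-theoretically. The $G$-equivariant structure $\alpha$ provides, for each $g\in G$, an isomorphism $\alpha_g\colon g^*\mathcal F\xrightarrow{\approx}\mathcal F$, and the cocycle condition makes $g\mapsto\alpha_g$ into an action of $G$ lifting its action on $Y$. Passing to stalks at $\bar y$ and combining with $\rho\colon N\to Aut(\mathcal F_{\bar y})$, I get for each $g$ an automorphism of $\mathcal F_{\bar y}$ compatible with $\rho$ in the sense that conjugation by it realizes the outer action of $g$ on $N$; more precisely, choosing for each $g$ a lift $\tilde g\in\pi_1(X,\bar x)$ of $g$, the maps $\rho(n)$ ($n\in N$) together with the stalk maps from $\alpha_g$ assemble into a well-defined homomorphism $\tilde\rho\colon\pi_1(X,\bar x)\to Aut(\mathcal F_{\bar y})$ — here the cocycle condition is exactly what guarantees that the assignment respects the group law $\tilde g\tilde g'$ versus $\widetilde{gg'}$, after accounting for the element of $N$ by which they differ. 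Then $\tilde\rho$ corresponds via Theorem \ref{E:RLSC} to a local system $\mathcal F'$ on $X$, and restricting $\tilde\rho$ to $N$ recovers $\rho$, i.e.\ $\pi^*\mathcal F'\cong\mathcal F$; one should also check this last isomorphism is compatible with the equivariant structures, so the correspondence is an equivalence and not merely a bijection on objects.

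The main obstacle I expect is bookkeeping the cocycle condition correctly when translating between the geometric/sheaf-theoretic formulation on $G\times Y$ and $G\times G\times Y$ (Definition \ref{E:ESD}, with its somewhat asymmetric maps $p_i,q_i$) and the plain group-cohomological statement that $\rho$ extends; in particular one must be careful that $G$ acts on $Y$ \emph{on the left} while the stalk-transport maps and the identification $\pi_1(X)/N\cong G$ may introduce inverses, so signs/orders of composition have to be tracked. Alternatively, and perhaps more cleanly, one can avoid stalks entirely and invoke \'etale descent: a $G$-equivariant sheaf on $Y$ for the Galois $G$-action, satisfying the cocycle condition, is the same as descent datum for the \'etale cover $\pi$, hence descends to a sheaf $\mathcal F'$ on $X$ with $\pi^*\mathcal F'\cong\mathcal F$; that this $\mathcal F'$ is again locally constant with finite stalks is immediate since $\pi$ is finite \'etale surjective. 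I would present the descent argument as the conceptual proof and, if desired, indicate the representation-theoretic reformulation as a remark.
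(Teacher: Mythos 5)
Your preferred argument---reading the $G$-equivariant structure as descent data for the \'etale cover $\pi$, via the isomorphism $G\times Y\cong Y\times_X Y$ that characterizes a Galois covering, and then descending---is essentially the paper's own proof, which phrases the descent step as the gluing property of sheaves on the \'etale site; your added check that $\mathcal{F}'$ is again locally constant with finite stalks (verified after pullback along the finite \'etale surjection $\pi$) is a point the paper leaves implicit. The representation-theoretic reformulation you sketch is a correct alternative route but is not needed for the descent proof.
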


\begin{proof}
$(1)\Rightarrow(2)$: $\mathcal{F}$ is $G$-equivariant if there exists an isomorphism $\alpha:p_2^*\mathcal{F}\rightarrow p_1^*\mathcal{F}$ satisfying the cocycle condition. The statement that $\pi:Y\rightarrow X$ is Galois covering means that the map $f:G\times Y \rightarrow Y\underset{X}{\times}Y$ defined by $f(g,y)=(g^{-1}y,y)$ is an isomorphism.

Let $\pi_i:Y\underset{X}{\times}Y\rightarrow Y$ $i=1,2$ be the two projections. It is clear that  $p_i=f\circ \pi_i$ $i=1,2$ therefore, since $f$ is an isomorphism, the isomorphism $\alpha:p_2^*\mathcal{F}\rightarrow p_1^*\mathcal{F}$ gives after base change under $f^{-1}$ an isomorphism $\beta=(f^{-1})^*\alpha:\pi_2^*\mathcal{F}\rightarrow \pi_1^*\mathcal{F}$.

We can finish the proof using the gluing property for sheaf on $X$. Let $(f_i:U_i\rightarrow X)$ be a covering of $X$ where $f_i$ are \'etale morphisms of finite type, and let $\pi_1:U_i\underset{X}{\times}U_j\rightarrow U_i$, $\pi_2:U_i\underset{X}{\times}U_j\rightarrow U_j$ denotes the projections. The gluing property of sheaves states that to give a sheaf  on $X$ is the same as to give sheaves $\mathcal{F}_i$ on $U_i$ and isomorphisms of sheaves on $U_i\underset{X}{\times}U_j$: $\pi_1^*\mathcal{F}_i\xrightarrow{\approx} \pi_2^*\mathcal{F}_j$ for all $i,j$.

In our case, the covering is simply $\pi:Y\rightarrow X$, then (2) is equivalent to an isomorphism over $Y\underset{X}{\times}Y$ between $\pi_1^*\mathcal{F}\xrightarrow{\approx}\pi_2^*\mathcal{F}$ but $\beta$ is exactly such an isomorphism.

$(2)\Rightarrow(1)$: The same proof, in the reverse direction, works here too. Note that the cocycle condition for equivariant sheaves becomes trivial. Indeed, the maps $\sigma:Y\underset{X}{\times}Y\xrightarrow{\pi_i}Y\xrightarrow{\pi}X$ coincide, and if $\mathcal{F}=\pi^*\mathcal{F}'$ then the isomorphism $\beta:\pi_2^*\mathcal{F}\rightarrow \pi_1^*\mathcal{F}$ is just the identity on $\sigma^*\mathcal{F}'$. Therefore $\alpha:p_2^*\mathcal{F}\rightarrow p_1^*\mathcal{F}$ is also the identity and the cocycle condition follows immediately.
\end{proof}

\begin{rem}
If we denote the category of $G$-equivariant sheaves on $Y$ by $Sh_G(Y_{et})$ then
$\pi^*:Sh(X_{et})\rightarrow Sh_G(Y_{et})$ is an equivalence of categories. Its inverse is $(\pi_*-)^G:Sh_G(Y_{et})\rightarrow Sh(X_{et})$ defined by $\mathcal{F}\mapsto (\pi_*\mathcal{F})^G$, the invariants of the direct image. See \cite{Vis} for treatment of the subject in a more general context.
\end{rem}

\subsection{$\ell$-adic local systems}

We first recall here some definitions concerning sheaves:

\begin{defn}
Let $\mathbb{Q}_{\ell}\subseteq F$ be a finite extension, $\mathcal{O}_F$ the ring of integers in $F$ and $\mathfrak{m}_F$ its maximal ideal.
\begin{enumerate}
\item
An {\em $\mathcal{O}_F$-sheaf} is a projective system $(\mathcal{F}_n)_{n\in\mathbb{N}}$ where $\mathcal{F}_n$ is a constructible sheaf of $\mathcal{O}_F/\mathfrak{m}_F^n$ modules such that $\mathcal{F}_{n+1}\rightarrow\mathcal{F}_n$ induces isomorphisms $\mathcal{F}_{n+1}\underset{\mathcal{O}_F/\mathfrak{m}_F^{n+1}}{\otimes}\mathcal{O}_F/\mathfrak{m}_F^n\xrightarrow{\approx}\mathcal{F}_n$.
\item
The category of {\em $F$-sheaves} is the quotient of the category of {\em $\mathcal{O}_F$-sheaf} by the torsion objects (sheaves that killed by some power of $\ell$. Specifically, these are sheaves $(\mathcal{F}_n)_{n\in\mathbb{N}}$ such that for $n$ large enough $\mathcal{F}_n=\mathcal{F}$ is independent of $n$). Given an $\mathcal{O}_F$-sheaf $\mathcal{F}$ its image under this quotient is denoted by $F\otimes_{\mathcal{O}_F}\mathcal{F}$.
\item
If $\mathbb{Q}_{\ell}\subseteq F \subseteq F'$ are finite extensions. Then there exists a functor from the category of $F$-sheaves to $F'$-sheaves $\mathcal{F}\mapsto \mathcal{F}\underset{F}{\otimes}F'$ defined by extension of coefficients. A {\em $\bar{\mathbb{Q}}_{\ell}$-sheaf} is a direct limit $\varinjlim \mathcal{F}_F$, where $\mathcal{F}_F$ is a $F$-sheaf and $F$ runs through the finite extensions of $\mathbb{Q}_{\ell}$.
\end{enumerate}
\end{defn}

\begin{defn}
Let $\mathbb{Q}_{\ell}\subseteq F$ be a finite extension, $\mathcal{O}_F$ the ring of integers in $F$ and $\mathfrak{m}_F$ its maximal ideal.
\begin{enumerate}
\item
An $\mathcal{O}_F$-sheaf $(\mathcal{F}_n)_{n\in\mathbb{N}}$ is {\em locally constant} if each $\mathcal{F}_n$ is.
\item
A $F$-sheaf $F\otimes_{\mathcal{O}_F}\mathcal{F}$ is a {\em locally constant} is $\mathcal{F}$ is.
\item
A $\bar{\mathbb{Q}}_{\ell}$-sheaf is {\em locally constant}, if it can be written as a limit of locally constant $F$-sheaves. Such a sheaf is called an {\em $\ell$-adic local system}.
\end{enumerate}
\end{defn}

\begin{rem}
The definitions of direct and inverse images of sheaves and the statements of the previous subsection have analogs in the $\ell$-adic case. The process is the following: First, take inverse limit to get versions for $\mathcal{O}_F$-sheaves. Then tensor with $F$ to get rid of the torsion and state version for $F$ sheaves. Finally, by direct limit conclude the corresponding statements about $\ell$-adic local systems. We immediately state such a version of theorem \re{RLSC}.
\end{rem}

\begin{thm}\label{E:LSRL}
Let $X$ be a connected scheme, $\bar{x}$ a geometric point, then there is an equivalence of categories between
$\ell$-adic local systems on $X$ and continuous representations of $\pi_1(X,\bar{x})$ in finite dimensional $\bar{\mathbb{Q}}_{\ell}$ vector spaces. It is defined by $\mathcal{F}\mapsto\mathcal{F}_{\bar{x}}$
\end{thm}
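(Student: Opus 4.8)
The strategy is the dévissage indicated in the remark preceding the statement: bootstrap from the finite-coefficient equivalence \re{RLSC} up through $\mathcal{O}_F$-sheaves, then $F$-sheaves, then $\bar{\mathbb{Q}}_\ell$-sheaves, verifying at each stage that the relevant limit is compatible with the stalk functor $\mathcal{F}\mapsto\mathcal{F}_{\bar x}$ and with $\mathrm{Hom}$.

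\textbf{Finite level.} Fix a finite extension $\mathbb{Q}_\ell\subseteq F$. For each $n$ the ring $\mathcal{O}_F/\mathfrak{m}_F^n$ is finite, so the $R$-module version of \re{RLSC} (noted in the remark after that theorem, with $R=\mathcal{O}_F/\mathfrak{m}_F^n$) gives an equivalence between locally constant sheaves of $\mathcal{O}_F/\mathfrak{m}_F^n$-modules with finite stalks on $X$ and finite $\mathcal{O}_F/\mathfrak{m}_F^n$-linear representations of $\pi_1(X,\bar x)$; continuity is automatic here, since a finite $\pi_1$-module is a module over a finite quotient, equivalently (Galois correspondence for the fundamental group) the sheaf becomes constant on a finite \'etale Galois covering of $X$.

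\textbf{Inverse limit: $\mathcal{O}_F$-sheaves.} An $\mathcal{O}_F$-sheaf is a compatible system $(\mathcal{F}_n)$ with $\mathcal{F}_{n+1}\otimes_{\mathcal{O}_F/\mathfrak{m}_F^{n+1}}\mathcal{O}_F/\mathfrak{m}_F^n\cong\mathcal{F}_n$. Applying the finite-level equivalence levelwise and taking $\varprojlim_n$ produces a continuous representation of $\pi_1(X,\bar x)$ on the finitely generated $\mathcal{O}_F$-module $\varprojlim_n(\mathcal{F}_n)_{\bar x}$ (continuous for the $\mathfrak{m}_F$-adic topology on the target). Conversely, given a continuous representation on a finitely generated $\mathcal{O}_F$-module $M$: since $\pi_1(X,\bar x)$ is compact, its image in $\mathrm{Aut}_F(M\otimes_{\mathcal{O}_F}F)$ is compact, hence preserves a lattice, so we may take $M$ to be a $\pi_1$-stable lattice; then $(M/\mathfrak{m}_F^nM)_n$ is a compatible system of finite representations, and the finite level produces the $\mathcal{O}_F$-sheaf. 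Full faithfulness follows from the finite level plus left-exactness of $\varprojlim$ on the (Mittag--Leffler) systems of $\mathrm{Hom}$-groups. Inverting $\ell$, i.e. passing to the quotient by torsion objects, then gives the $F$-sheaf version: the torsion $\mathcal{O}_F$-sheaves correspond exactly to continuous representations on $\ell$-power torsion $\mathcal{O}_F$-modules, precisely those annihilated by $\otimes_{\mathcal{O}_F}F$, so we obtain an equivalence between locally constant $F$-sheaves and continuous representations of $\pi_1(X,\bar x)$ on finite-dimensional $F$-vector spaces, implemented by $F\otimes_{\mathcal{O}_F}(\mathcal{F}_n)\mapsto F\otimes_{\mathcal{O}_F}\varprojlim_n(\mathcal{F}_n)_{\bar x}$.

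\textbf{Direct limit over $F$, and the obstacle.} A $\bar{\mathbb{Q}}_\ell$-sheaf is by definition a filtered colimit $\varinjlim_F\mathcal{F}_F$ along extension of scalars, and the stalk functor commutes with this colimit; on the representation side, passing to $\varinjlim_F$ turns $F$-representations into $\bar{\mathbb{Q}}_\ell$-representations, and since $\pi_1(X,\bar x)$ is compact and $\bar{\mathbb{Q}}_\ell=\bigcup_F F$, every continuous representation on a finite-dimensional $\bar{\mathbb{Q}}_\ell$-vector space has image in $\mathrm{GL}_n(F)$ for some finite $F$, hence is extended from an $F$-representation. Taking the colimit over $F$ in the previous step therefore yields the asserted equivalence, with quasi-inverse sending $V$ to the local system built as in the proof of \re{RLSC} and carried through the same three limits. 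The only genuinely non-formal ingredient is the realizability input used twice above --- that a continuous $\ell$-adic (resp.\ $\bar{\mathbb{Q}}_\ell$-) representation of the profinite group $\pi_1(X,\bar x)$ can be conjugated into $\mathrm{GL}_n(\mathcal{O}_F)$ for a suitable finite $F$ --- which rests on compactness of $\pi_1$ together with the standard fact that a compact subgroup of $\mathrm{GL}_n(\bar{\mathbb{Q}}_\ell)$ stabilizes an $\mathcal{O}_F$-lattice for some finite $F$. Everything else is bookkeeping: compatibility of $\varprojlim_n$ and $\varinjlim_F$ with stalks and with $\mathrm{Hom}$ (Mittag--Leffler for the inverse limit, exactness of filtered colimits).
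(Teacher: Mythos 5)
Your proof is correct and takes essentially the same route the paper intends: the paper gives no separate argument for this theorem beyond the remark immediately preceding it, which prescribes exactly your dévissage (finite coefficients via Theorem \re{RLSC}, inverse limit to $\mathcal{O}_F$-sheaves, quotient by torsion to get $F$-sheaves, direct limit over finite extensions $F$). Your write-up merely makes explicit the standard non-formal inputs (stable lattices for compact groups and definability of continuous $\bar{\mathbb{Q}}_{\ell}$-representations over some finite $F$), which is consistent with, and slightly more complete than, the paper's treatment.
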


\begin{rem}
\begin{enumerate}
\item
In particular we get a correspondence between one dimensional $\ell$-adic local systems and characters.
\item
We can use the theorem above to apply notions defined for representations also for local systems and conversely. In particular, the notion of ramification of a local system is defined to be the ramification of the corresponding representation.
\end{enumerate}
\end{rem}

\subsection{The Frobenius action}

In this subsection, $X$ will be an algebraic variety over the finite field $\mathbb{F}_q$.

\begin{note}
$\bar{X}$ will denote $X\underset{Spec\,\mathbb{F}_q}{\times}Spec\,\bar{\mathbb{F}}_q$, it is an algebraic variety over $\bar{\mathbb{F}}_q$.

The projection map $\bar{X}\rightarrow X$ will be denoted by $\pi_X$.

We will use the same notation for sheaves, if $\mathcal{F}$ is a sheaf over $X$, we will write $\bar{\mathcal{F}}$ for the pullback $\pi^*_X\mathcal{F}$.
\end{note}

Now we define three different endomorphisms of $\bar{X}$:

\begin{defn}
\begin{enumerate}
\item
The {\em absolute Frobenius} $Ab_X:X\rightarrow X$ is defined as the identity on the underlying space and $f\mapsto f^q$ on the structure sheaf $\mathcal{O}_{\bar{X}}$, similarly we can define $Ab_{\bar{X}}:\bar{X}\rightarrow \bar{X}$.

\item
The {\em geometric Frobenius} map $F_X:\bar{X}\rightarrow \bar{X}$ is the map $Ab_X\times id_{Spec\,\bar{\mathbb{F}}_q}: X\underset{Spec\,\mathbb{F}_q}{\times}Spec\,\bar{\mathbb{F}}_q \rightarrow X\underset{Spec\,\mathbb{F}_q}{\times}Spec\,\bar{\mathbb{F}}_q$.
\item
The {\em arithmetic Frobenius} map $F_X:\bar{X}\rightarrow \bar{X}$ is the map $id_X\times Ab_{Spec\bar{\mathbb{F}}_q}: X\underset{Spec\,\mathbb{F}_q}{\times}Spec\,\bar{\mathbb{F}}_q \rightarrow X\underset{Spec\,\mathbb{F}_q}{\times}Spec\,\bar{\mathbb{F}}_q$.

\end{enumerate}
\end{defn}

\begin{note}
We sometimes omit $X$ from the notation when it is clear from the context and write simply $F$, $Ar$ and $Ab$ for $F_X$, $Ar_X$ and $Ab_X$ respectively.
\end{note}

\begin{rem}\label{E:FAF}
\
\begin{enumerate}
\item 
The geometric Frobenius $F_X:\bar{X}\rightarrow \bar{X}$ is a morphism over $\bar{\mathbb{F}}_q$ and $Ar_X:\bar{X}\rightarrow \bar{X}$ is an automorphism of $\bar{X}$ corresponding to the Frobenius automorphism of the field extension $\bar{\mathbb{F}}_q/\mathbb{F}_q$.

\item
From the definition it is clear that the following relation between the Frobenii is satisfied:
\begin{center}
$Ar_X\circ F_X =F_X \circ Ar_X = Ab_X \times Ab_{Spec\bar{\mathbb{F}}_q} = Ab_{\bar{X}}$
\end{center}

\item
The Frobenii are functorial. If $\theta:X\rightarrow Y$ is a map of schemes over $\mathbb{F}_q$ then $Ab_Y\circ \theta= \theta\circ Ab_X$. The equality as maps of the underlying spaces is clear, and the equality of the maps on the structure sheaves also follows since the map $g:\mathcal{O}_Y(U)\rightarrow \theta_*\mathcal{O}_X(U)$ is an homomorphism of rings, and therefore for $f\in \mathcal{O}_Y(U)$ we have: $g(f^q)=g(f)^q$. We conclude that if $\bar{\theta}:\bar{X}\rightarrow \bar{Y}$ is the induced map of schemes over $\bar{\mathbb{F}}_q$, then $F_Y\circ \bar{\theta}=(Ab_Y\times id_{Spec\,\bar{\mathbb{F}}_q}) \circ (\theta \times id_{Spec\,\bar{\mathbb{F}}_q}) = (\theta \times id_{Spec\,\bar{\mathbb{F}}_q}) \circ (Ab_X\times id_{Spec\,\bar{\mathbb{F}}_q})=\bar{\theta} \circ F_X$ therefore the geometric Frobenius is functorial, and similarly for the arithmetic Frobenius $Ar_Y\circ \bar{\theta}=\bar{\theta} \circ Ar_X$.
\end{enumerate}
\end{rem}

\begin{defn}
A {\em Weil sheaf} $\mathcal{F}$ over $X$ is a pair $(\bar{\mathcal{F}},\psi)$ where $\bar{\mathcal{F}}$ is an $\ell$-adic sheaf over $\bar{X}$ and $\psi$ is an isomorphism of sheaves: $\psi:F^*\bar{\mathcal{F}}\xrightarrow{\approx}\bar{\mathcal{F}}$. We can  think about $\psi$ as an action of Frobenius.
\end{defn}

\begin{con}\label{E:SFCFQ}
Let $\mathcal{F}=(\bar{\mathcal{F}},\psi)$ is a Weil sheaf on $Spec\,\mathbb{F}_q$. $\bar{\mathcal{F}}$ is a sheaf on $Spec\,\bar{\mathbb{F}}_q$ so it is just a $\bar{\mathbb{Q}}_{\ell}$ vector space (corresponding to the stalk in the zero ideal of $\bar{\mathbb{F}}_q$). The geometric Frobenius $F$ is the identity morphism in this case, therefore $\psi:\bar{\mathcal{F}}\rightarrow \bar{\mathcal{F}}$ is an endomorphism of this vector space, whose trace will be denoted by $Tr(\mathcal{F})$.
\end{con}

\begin{prop}
For any sheaf $\bar{\mathcal{F}}$ on $\bar{X}$ there exist a canonical isomorphism: $\theta:Ab^*\bar{\mathcal{F}}\xrightarrow{\approx}\bar{\mathcal{F}}$.
\end{prop}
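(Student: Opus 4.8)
The plan is to produce $\theta$ as the value at $\bar{\mathcal F}$ of a canonical natural isomorphism $Ab^{*}\xrightarrow{\approx}\mathrm{id}$ between endofunctors of $Sh(\bar X_{et})$, coming from the fact that the absolute Frobenius induces the identity morphism on the small \'etale site of $\bar X$. Write $Ab$ for $Ab_{\bar X}$. Recall that $Ab$ is the identity on the underlying topological space and that it is a universal homeomorphism (being integral, radicial and surjective); these two facts, together with functoriality of the absolute Frobenius (remark~\re{FAF}(3)), are all I will use about it.

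The heart of the matter is a lemma: for every \'etale morphism $g\colon U\to\bar X$ the relative Frobenius
\[
F_{U/\bar X}\colon U\longrightarrow U^{(Ab)}:=U\underset{\bar X,\,Ab}{\times}\bar X,
\]
obtained from the commutative square expressing functoriality of $Ab$ (namely $Ab\circ g=g\circ Ab_U$), is an isomorphism of $\bar X$-schemes. To prove it I would argue: $U^{(Ab)}\to\bar X$ is \'etale, being a base change of $g$, so the $\bar X$-morphism $F_{U/\bar X}$ between two schemes \'etale over $\bar X$ is itself \'etale; the projection $U^{(Ab)}\to U$ is a homeomorphism, being a base change of the universal homeomorphism $Ab$, and since its composite with $F_{U/\bar X}$ equals $Ab_U$, which is a homeomorphism, $F_{U/\bar X}$ is one too; finally $F_{U/\bar X}$ is radicial, as every relative Frobenius is. An \'etale radicial morphism is an open immersion and a surjective open immersion is an isomorphism, whence $F_{U/\bar X}$ is an isomorphism. (Alternatively this is standard and may simply be cited; see e.g.\ \cite{Mi} or \cite{SGAI}.)

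Granting the lemma, I would finish formally. The inverses $F_{U/\bar X}^{-1}\colon U^{(Ab)}\xrightarrow{\approx}U$ are uniquely determined and natural in $U$, so the base-change endofunctor $U\mapsto U^{(Ab)}$ of the \'etale site $\bar X_{et}$ is canonically isomorphic to the identity functor; hence $Ab$ induces the identity morphism of the \'etale topos and $Ab^{*}$ is canonically isomorphic to $\mathrm{id}$ on $Sh(\bar X_{et})$. Explicitly, $Ab^{*}\bar{\mathcal F}$ is the sheaf associated with the presheaf $U\mapsto\bar{\mathcal F}(U^{(Ab)})$, and postcomposing with $\bar{\mathcal F}(F_{U/\bar X}^{-1})$ gives a natural isomorphism of this presheaf onto $U\mapsto\bar{\mathcal F}(U)$, whose sheafification is the sought $\theta$. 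The same reasoning applies level by level to $\mathcal O_F$-sheaves and then, by tensoring with $F$ and passing to the direct limit, to $\ell$-adic sheaves, so the statement holds in the generality needed for Weil sheaves. Canonicity — functoriality in $\bar{\mathcal F}$, and compatibility with stalks, tensor products and further pullbacks — is immediate because every ingredient is canonical.

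I expect the only genuine obstacle to be the lemma, and within it the two structural inputs: that the relative Frobenius of an \'etale morphism is simultaneously \'etale, radicial and surjective, and that an \'etale radicial morphism is an open immersion (equivalently a monomorphism, its diagonal being a surjective open immersion, hence an isomorphism). Everything past that is a routine unwinding of the definition of pullback on the \'etale site, together with the observation that the construction involves no auxiliary choices — which is precisely what the word "canonical" asserts in the statement.
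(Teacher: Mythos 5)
Your argument is correct and is essentially the standard proof of the fact the paper itself does not prove but simply cites from \cite{SGA5}: the relative Frobenius of an \'etale morphism over $\bar X$ is \'etale, radicial and surjective, hence an isomorphism, so base change along $Ab$ is canonically isomorphic to the identity functor on the small \'etale site, whence $Ab^{*}\cong \mathrm{id}$ on sheaves, compatibly with the passage to $\mathcal{O}_F$-, $F$- and $\bar{\mathbb{Q}}_{\ell}$-coefficients. The only cosmetic slip is that the presheaf $U\mapsto\bar{\mathcal{F}}(U^{(Ab)})$ you write down is literally the formula for $Ab_{*}\bar{\mathcal{F}}$ rather than for $Ab^{*}\bar{\mathcal{F}}$, whose sections are computed by a colimit over factorizations $U\to V^{(Ab)}$ in which the pair $(U,F_{U/\bar X})$ is cofinal; but once your lemma is in place both $Ab_{*}$ and $Ab^{*}$ are canonically isomorphic to the identity, so the substance of the argument is unaffected.
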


\begin{proof}
\cite[XV, Expos\'e $\S2$, $n^{\circ}1$]{SGA5}.
\end{proof}

We now give a structure of Weil sheaves to sheaves that come from $X$:

\begin{prop}\label{E:FQWS}
If $\mathcal{F}$ is a sheaf on $X$, then $\bar{\mathcal{F}}=\pi_X^*\mathcal{F}$ has a natural action of Frobenius that makes it a Weil sheaf.
\end{prop}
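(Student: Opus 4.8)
The plan is to produce the isomorphism $\psi:F_X^*\bar{\mathcal F}\xrightarrow{\approx}\bar{\mathcal F}$ demanded by the definition of a Weil sheaf, and the whole point is to reduce it to the canonical isomorphism $Ab^*(-)\xrightarrow{\approx}(-)$ for the \emph{absolute} Frobenius, which was just proved. Since $\bar{\mathcal F}=\pi_X^*\mathcal F$ is pulled back from $X$, its pullback along $F_X$ can be rewritten using the relations among the three Frobenii from remark \re{FAF} and the geometry of the projection $\pi_X:\bar X\to X$; no analytic input is needed, only a short chain of formal identities between pullback functors.

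First I would note that $Ar_X=id_X\times Ab_{Spec\,\bar{\mathbb F}_q}$ only affects the factor $Spec\,\bar{\mathbb F}_q$, so $\pi_X\circ Ar_X=\pi_X$; hence there is a canonical identification $Ar_X^*\bar{\mathcal F}=Ar_X^*\pi_X^*\mathcal F=(\pi_X\circ Ar_X)^*\mathcal F=\pi_X^*\mathcal F=\bar{\mathcal F}$, functorial in $\mathcal F$. Next, using the relation $Ab_{\bar X}=Ar_X\circ F_X$ from remark \re{FAF}(2) and contravariance of pullback, $Ab_{\bar X}^*\bar{\mathcal F}=(Ar_X\circ F_X)^*\bar{\mathcal F}=F_X^*(Ar_X^*\bar{\mathcal F})=F_X^*\bar{\mathcal F}$, the last step by the previous identification. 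Thus $F_X^*\bar{\mathcal F}$ is canonically the same as $Ab_{\bar X}^*\bar{\mathcal F}$. (Equivalently one could argue on $X$ itself, via $\pi_X\circ F_X=Ab_X\circ\pi_X$ together with the $X$-version of the canonical isomorphism $Ab_X^*\mathcal F\xrightarrow{\approx}\mathcal F$.)

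Now I would invoke the preceding proposition applied to the sheaf $\bar{\mathcal F}$ on $\bar X$: it gives a canonical $\theta:Ab_{\bar X}^*\bar{\mathcal F}\xrightarrow{\approx}\bar{\mathcal F}$. Composing $\theta$ with the identification $F_X^*\bar{\mathcal F}=Ab_{\bar X}^*\bar{\mathcal F}$ of the previous paragraph yields the desired $\psi:F_X^*\bar{\mathcal F}\xrightarrow{\approx}\bar{\mathcal F}$, so $(\bar{\mathcal F},\psi)$ is a Weil sheaf. Naturality in $\mathcal F$ is automatic, since $\pi_X^*$, the identification coming from $\pi_X\circ Ar_X=\pi_X$, and the canonical $\theta$ are each functorial. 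In the $\ell$-adic case one first carries this out for the torsion levels $\mathcal F_n$ and then passes to the projective and direct limits, as in the earlier remark on transporting the torsion results to $\ell$-adic sheaves. The only place to be careful — not a genuine obstacle, but where a direction error would creep in — is the composition order: one must use $Ab_{\bar X}=Ar_X\circ F_X$ so that $(Ar_X\circ F_X)^*=F_X^*\circ Ar_X^*$ lands on $F_X^*\bar{\mathcal F}$ and not on $Ar_X^*\bar{\mathcal F}$.
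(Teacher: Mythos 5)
Your argument is correct and essentially the same as the paper's: both reduce $F_X^*\bar{\mathcal{F}}$ to $Ab_{\bar{X}}^*\bar{\mathcal{F}}$ via the relation $Ab_{\bar{X}}=Ar_X\circ F_X$ together with the canonical isomorphism $\theta$ of the preceding proposition, and both use $\pi_X\circ Ar_X=\pi_X$ to see that $Ar_X^*$ acts trivially on a sheaf pulled back from $X$. The only difference is presentational — the paper applies $(Ar^*)^{-1}$ to the isomorphism $Ab^*\bar{\mathcal{F}}\cong\bar{\mathcal{F}}$, while you substitute the identification $Ar_X^*\bar{\mathcal{F}}=\bar{\mathcal{F}}$ directly — which yields the same $\psi$.
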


\begin{proof}
By the above proposition we have: $(F\circ Ar)^*\bar{\mathcal{F}}=Ab^*\bar{\mathcal{F}}\cong\bar{\mathcal{F}}$ apply $(Ar^*)^{-1}$ on this equation to get $F^*\bar{\mathcal{F}}\cong (Ar^*)^{-1}\bar{\mathcal{F}}$. Since $Ar$ acts only on the second factor of the structure sheaf of $\bar{X}=X\underset{Spec\,\mathbb{F}_q}{\times}Spec\,\bar{\mathbb{F}}_q$ we have $\pi_X\circ Ar=\pi_X$ and therefore:
\begin{center}
$F^*\bar{\mathcal{F}}\cong (Ar^*)^{-1}\bar{\mathcal{F}}=
(Ar^*)^{-1}\pi_X^* \mathcal{F}=\pi_X^* \mathcal{F}=\bar{\mathcal{F}}$
\end{center}
Is the required action of Frobenius.
\end{proof}

\begin{prop}\label{E:PBWS}
Let $\theta:X\rightarrow Y$ be a map, and let $(\bar{\mathcal{F}},\psi)$ be a Weil sheaf on $Y$. Then the pullback $\bar{\theta}^*\bar{\mathcal{F}}$ has a natural action of Frobenius that makes it a Weil sheaf.
\end{prop}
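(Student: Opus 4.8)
The plan is to transport the Frobenius isomorphism $\psi$ along the canonical pullback comparison isomorphisms, using only the functoriality of the geometric Frobenius recorded in Remark \re{FAF}(3). So there is essentially nothing to prove beyond chasing natural transformations.

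First I would note that the induced map $\bar\theta\colon\bar X\to\bar Y$ intertwines the geometric Frobenii: $F_Y\circ\bar\theta=\bar\theta\circ F_X$, which is precisely the content of Remark \re{FAF}(3). Pulling $\bar{\mathcal F}$ back along both sides and invoking the pseudo-functoriality of $(-)^{*}$ (the natural isomorphism $(g\circ h)^{*}\cong h^{*}\circ g^{*}$, available for $\ell$-adic sheaves via the inverse-limit/tensor/direct-limit procedure already described) produces a chain of natural isomorphisms
\[
F_X^{*}\bigl(\bar\theta^{*}\bar{\mathcal F}\bigr)\;\cong\;(\bar\theta\circ F_X)^{*}\bar{\mathcal F}\;=\;(F_Y\circ\bar\theta)^{*}\bar{\mathcal F}\;\cong\;\bar\theta^{*}\bigl(F_Y^{*}\bar{\mathcal F}\bigr).
\]
Composing this chain with $\bar\theta^{*}(\psi)\colon\bar\theta^{*}(F_Y^{*}\bar{\mathcal F})\xrightarrow{\ \approx\ }\bar\theta^{*}\bar{\mathcal F}$ defines the desired morphism $\psi'\colon F_X^{*}(\bar\theta^{*}\bar{\mathcal F})\to\bar\theta^{*}\bar{\mathcal F}$. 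Since $\psi$ is an isomorphism and functors preserve isomorphisms, $\bar\theta^{*}(\psi)$ is an isomorphism, hence so is $\psi'$; thus $(\bar\theta^{*}\bar{\mathcal F},\psi')$ is a Weil sheaf on $X$.

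The one point deserving a moment's care — though I do not expect it to be a real obstacle — is the adjective \emph{natural}: one should confirm that the comparison isomorphism $F_X^{*}\bar\theta^{*}\cong\bar\theta^{*}F_Y^{*}$ is the canonical one (independent of auxiliary choices), that the construction is compatible with composition of morphisms $X\to Y\to Z$, and that it recovers the Frobenius structure of Proposition \re{FQWS} in the case $\bar{\mathcal F}=\pi_Y^{*}\mathcal F$. All of this is a routine coherence verification for the pseudo-functor $(-)^{*}$, so the proposition is a formal consequence of the functoriality of the geometric Frobenius.
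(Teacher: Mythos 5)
Your argument is correct and is essentially the paper's own proof: the paper likewise applies $\bar{\theta}^*$ to $\psi$ and uses the functoriality relation $\bar{\theta}\circ F_X=F_Y\circ\bar{\theta}$ from Remark \re{FAF} to identify $\bar{\theta}^*F_Y^*\bar{\mathcal{F}}$ with $F_X^*\bar{\theta}^*\bar{\mathcal{F}}$, obtaining the Frobenius action on the pullback. Your extra remarks about the coherence of the comparison isomorphisms only make explicit what the paper leaves implicit.
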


\begin{proof}
We have the isomorphism $\psi:F_Y^*\bar{\mathcal{F}}\xrightarrow{\approx} \bar{\mathcal{F}}$. Apply $\bar{\theta}^*$ to get  $\bar{\theta}^*\psi:\bar{\theta}^*F_Y^*\bar{\mathcal{F}}\xrightarrow{\approx} \bar{\theta}^*\bar{\mathcal{F}}$. But by functoriality of the Frobenius, $\bar{\theta}\circ F_X = F_Y \circ \bar{\theta}$ therefore $\bar{\theta}^*\psi:F_X^*\bar{\theta}^*\bar{\mathcal{F}}\xrightarrow{\approx} \bar{\theta}^*\bar{\mathcal{F}}$ is an action of Frobenius on $\bar{\theta}^*\bar{\mathcal{F}}$.
\end{proof}

\begin{rem}\label{E:TPWS}
If $(\bar{\mathcal{F}}_1,\psi_1), (\bar{\mathcal{F}}_2,\psi_2)$ are Weil sheaves on $X$ then the tensor product $\bar{\mathcal{F}}_1\otimes \bar{\mathcal{F}}_2$ has also a structure of a Weil sheaf by: $F^*(\bar{\mathcal{F}}_1\otimes \bar{\mathcal{F}}_2)\xrightarrow{\approx} F^*\bar{\mathcal{F}}_1\otimes F^*\bar{\mathcal{F}}_2\xrightarrow{\psi_1\otimes \psi_2}\bar{\mathcal{F}}_1\otimes \bar{\mathcal{F}}_2$.
\end{rem}

\begin{con}[Sheaf-function correspondence]\label{E:SFC}
Let $X(\mathbb{F}_q)$ be the set of $\mathbb{F}_q$ points of $X$. To a Weil sheaf $\mathcal{F}=(\bar{\mathcal{F}},\psi)$ we can associate a function $f^{\mathcal{F}}:X(\mathbb{F}_q)\rightarrow \bar{\mathbb{Q}}_{\ell}$ defined by:
\begin{center}
$f^{\mathcal{F}}(x)=Tr(i_x^*\mathcal{F})$
\end{center}
Where $i_x:Spec\,\mathbb{F}_q\rightarrow X$ is just the $\mathbb{F}_q$-point $x$.
\end{con}

\begin{rem}\label{E:RSFC}
This notion is convenient since many useful functions come from sheaves, and operations on functions correspond to operations on sheaves. For example, let $g:X\rightarrow Y$ then:
\begin{enumerate}
\item
Pullback of functions and sheaves correspond to one another, if $\mathcal{F}$ is a Weil sheaf on $Y$ then:
\begin{center}
$f^{g^*(\mathcal{F})}=g^*(f^{\mathcal{F}})$
\end{center}
This property is immediate from the definition, since if $x\in X(\mathbb{F}_q)$ maps to $g(x)=y\in Y(\mathbb{F}_q)$, then $i_y^*\mathcal{F}=i_x^*g^*\mathcal{F}$ since $i_y=g\circ i_x$.
\item
Tensor product of sheaves corresponds to product of functions, if $\mathcal{F}_1,\mathcal{F}_2$ are Weil sheaves on $X$:
\begin{center}
$f^{\mathcal{F}_1\otimes\mathcal{F}_2}=f^{\mathcal{F}_1}\cdot f^{\mathcal{F}_2}$
\end{center}
This is also trivial using the definition of the Weil sheaf structure on the tensor product and the fact that if $f_i:V_i\rightarrow V_i$ $i=1,2$ are linear maps of the vector spaces $V_i$ then $Tr(f_1\otimes f_2)=Tr(f_1)Tr(f_2)$
\end{enumerate}

\end{rem}
More importantly, sheaves have other operations that don't exist for functions, making them better objects to work with.

\section{Proof of the first part of class field theory}\label{S:PFPCFT}

\subsection{Unramified case}

Let $X$ be a smooth projective connected curve defined over $\mathbb{F}_q$ with function field $K=\mathbb{F}_q(X)$ we have already proved that $Pic_D(X)(\mathbb{F}_q)\cong K^{\times}\backslash\mathbb{A}_K^{\times}/\mathbb{O}_D^{\times}$ (\re{GPGI}). The following statement will be useful:

\begin{prop}\label{E:TLI}
Let $H$ be a commutative algebraic group defined over $\mathbb{F}_q$, then there exists a surjective morphism $L_H:\pi_1(H)\twoheadrightarrow H(\mathbb{F}_q)$ defined in the proof.
\end{prop}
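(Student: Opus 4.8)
\emph{The plan is to build $L_H$ from the Lang isogeny.} For a connected commutative algebraic group $H$ over $\mathbb{F}_q$, the geometric Frobenius $F=F_H\colon\bar H\to\bar H$ (the $q$-power map, cf.\ \re{FAF}) is an endomorphism of the group $\bar H$, and its differential vanishes identically. Hence the \emph{Lang map}
\[
\mathcal{L}\colon \bar H\longrightarrow \bar H,\qquad \mathcal{L}(h)=h^{-1}\cdot F(h),
\]
which is a morphism of schemes (indeed, since $H$ is commutative, a homomorphism of algebraic groups), has differential $d\mathcal{L}_e=-\mathrm{id}$ on $\mathrm{Lie}(\bar H)$; by homogeneity $\mathcal{L}$ is \'etale everywhere. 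Lang's theorem gives moreover that $\mathcal{L}$ is surjective, so $\mathcal{L}$ is a finite \'etale covering of degree $|\ker\mathcal{L}|$.

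The key identification is $\ker\mathcal{L}=\{h\in\bar H:F(h)=h\}=H(\mathbb{F}_q)$, the fixed points of Frobenius being exactly the $\mathbb{F}_q$-rational points. The finite group $H(\mathbb{F}_q)$ acts on the source $\bar H$ by translation; since this action is by deck transformations of $\mathcal{L}$ and its order equals $\deg\mathcal{L}$, it is simply transitive on the fibers, so $\mathcal{L}\colon\bar H\to\bar H$ is a connected Galois covering with Galois group $H(\mathbb{F}_q)$. By the Galois correspondence for the \'etale fundamental group recalled in \rs{EFG} (a connected Galois covering with group $G$ being classified by a continuous surjection $\pi_1\twoheadrightarrow G$), this covering yields the desired surjective homomorphism $L_H\colon\pi_1(H)\twoheadrightarrow H(\mathbb{F}_q)$.

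The two non-formal inputs are: (i) Lang's theorem, i.e.\ surjectivity of $\mathcal{L}$, for which one invokes connectedness of $H$; and (ii) the bookkeeping when $H$ is not connected — the Lang map need not be surjective onto all of $H$ in that case (e.g.\ for $Pic_D$, whose component group is $\mathbb{Z}$), so to land on the full group $H(\mathbb{F}_q)$ rather than just $H^0(\mathbb{F}_q)$ one must either restrict to the identity component and argue separately, or enhance $\pi_1$ appropriately; this is the point I expect to cost the most care. Everything else — that $dF=0$, the description of $\ker\mathcal{L}$, and that a Galois covering produces a surjection from $\pi_1$ — is immediate from the definitions.
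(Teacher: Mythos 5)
Your proof is correct and is essentially the paper's argument: both construct the covering from the Lang isogeny $a\mapsto F(a)/a$, identify its group of deck transformations with $H(\mathbb{F}_q)$ acting by translations, and obtain the surjection $L_H\colon\pi_1(H)\twoheadrightarrow H(\mathbb{F}_q)$ because a connected Galois covering is a quotient of the universal one. Your caveat (ii) about disconnected $H$ (e.g.\ $Pic_D$, where the Lang map only covers the identity component and yields $H^0(\mathbb{F}_q)$) is a genuine subtlety, but the paper's own three-line proof silently assumes connectedness as well, so on this point you are more careful than the source rather than missing anything it supplies.
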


\begin{proof}
The {\em Lang isogeny} $L:H\rightarrow H$ is defined by $a\mapsto F(a)/a$, where $F:H\rightarrow H$ is the geometric Frobenius defined above. $L$ is a Galois covering with group of covering transformation $ker(L)=H(\mathbb{F}_q)$ (It acts on the covering by translation, $T_x:H\xrightarrow{\approx} H$ is $T_x(y)=xy$). But $\pi_1(H)$ is the group of covering transformations of the universal covering, therefore $H(\mathbb{F}_q)$ is a quotient $L_H:\pi_1(H)\twoheadrightarrow H(\mathbb{F}_q)$.
\end{proof}

\begin{rem}\label{E:RXSF}
\
\begin{enumerate}
\item
A character $\xi:H(\mathbb{F}_q)\rightarrow \bar{\mathbb{Q}}^{\times}_{\ell}$ defines by composition with $L_H$ a character $\pi_1(H)\rightarrow  \bar{\mathbb{Q}}^{\times}_{\ell}$, which is equivalent to a one dimensional local system $\mathcal{H}$ on $H$ by theorem \re{RLSC}. Since $\mathcal{H}$ is defined over $\mathbb{F}_q$ it has a structure of a Weil sheaf by proposition \re{FQWS}. We claim that $f^{\mathcal{H}}=\xi$.

Indeed, let $x\in H(\mathbb{F}_q)$ and we use here the notations of the sheaf function correspondence \re{SFC}. The value of the function $f^{\mathcal{H}}(x)$ is defined to be the trace of the action of Frobenius on the stalks of $i_x^*\mathcal{H}$, in the one dimensional case, the action of Frobenius is simply multiplication by an element of $\bar{\mathbb{Q}}^{\times}_{\ell}$, hence the trace is equal to this element, so by definition it is  $f^{\mathcal{H}}(x)$.

On the other hand if $y$ is a point such that $L(y)=x$ then $x=F(y)/y$ hence $xy=F(y)$, therefore the action of Frobenius on $i_x^*\mathcal{H}$ is induced by the action of $T_x:H\rightarrow H$ defined by $T_x(y)=xy$ on the covering $L:H\rightarrow H$. But the action of $T_x$ is exactly how $x\in H(\mathbb{F}_q)$ acts on the covering, which is multiplication by $\xi(x)$. We conclude that $f^{\mathcal{H}}(x)=\xi(x)$.

\item
If $\mathcal{F}$ is a one dimensional local system on $X$ and $\rho: \pi_1(X)\rightarrow \bar{\mathbb{Q}}^{\times}_{\ell}$ the corresponding representation under the equivalence of theorem \re{LSRL}, $\mathcal{F}$ defines a Weil sheaf by proposition \re{FQWS}. Let $x\in X$ be the $\mathbb{F}_q$ point of $X$ corresponds to the valuation $v$ of $K$, then $\rho(Fr_v)=f^{\mathcal{F}}(x)$. Indeed, consider the map $i_x$ defined by composition $Spec\,\mathbb{F}_q\rightarrow Spec\,\mathcal{O}_x\rightarrow X$. The inverse image of $Fr_v$ under the map $\pi_1(Spec\,\mathbb{F}_q)\rightarrow \pi_1(X)$ is  $x\mapsto x^q\in Gal(\bar{\mathbb{F}}_q/\mathbb{F}_q)$ hence the pullback $i_x^*\rho$ is defined on the generator by $i_x^*\rho(x\mapsto x^q)=\rho(Fr_v)$. But $x\mapsto x^q$ is simply the geometric Frobenius on the structure sheaf of $Spec\,\mathbb{F}_q$. Therefore by definition $f^{\mathcal{F}}(x)=Tr(i_x^*\mathcal{F})=\rho(Fr_v)$.
\end{enumerate}
\end{rem}

\begin{proof}[Proof of existence \re{UCFT} (1)]
We start from a character $\xi: K^{\times}\backslash\mathbb{A}_K^{\times}/\mathbb{O}_D^{\times}\cong Pic_D(\mathbb{F}_q)\rightarrow \bar{\mathbb{Q}}_{\ell}^{\times}$. By remark (1) above it gives a Weil sheaf $\mathcal{H}$ on $Pic_D$. We pull $\mathcal{H}$ back under the map $\phi:X-D\rightarrow Pic_D$ defined above in \re{PHIDEF} to get a Weil sheaf $\mathcal{F}$ on $X-D$, therefore (using theorem \re{LSRL}) we get a representation $\rho: \pi_1(X-D)\rightarrow \bar{\mathbb{Q}}^{\times}_{\ell}$. In the case $[D]=0$ this is a character of $\pi_1(X)$ which is simply an unramified character of $G_K$.

Now, it remains to show that $\rho(Fr_v)=\xi(\pi_v)$ for an unramified prime $v$ of $K$, (this proof will also work for the ramified case). Let $x$ be the $\mathbb{F}_q$-point of $X$ corresponding to $v$, by remark (2) above, $\rho(Fr_v)=f^{\mathcal{F}}(x)$. Since $\phi(x)=\mathcal{O}([x])$ we get by remark \re{RSFC} (1) that $f^{\mathcal{F}}(x)=f^{\mathcal{H}}(\mathcal{O}([x]))$. But $[x]$ corresponds to $\pi_v$ under the correspondence between $Cl_D$ and $K^{\times}\backslash\mathbb{A}_K^{\times}/\mathbb{O}^{\times}_D$, hence by remark (1) above $f^{\mathcal{H}}(\mathcal{O}([x]))=\xi(\pi_v)$ and we are done.
\end{proof}

\subsection{Tamely ramified case}

\begin{proof}[Proof of existence in \re{RCFT} (1), tame ramification case]
Here $[D]=\underset{i}{\sum} x_i$ is a sum of points with multiplicity 1. Let $\xi:Pic_D(\mathbb{F}_q)\rightarrow \bar{\mathbb{Q}}_{\ell}^{\times}$ be a character, and consider also $\xi^{q-1}$. In our case, proposition \re{FPIC} gives a short exact sequence of schemes, since $Hom(Spec\,\mathbb{F}_q,-)$ is left exact we obtain the sequence:

\begin{center}
$1\rightarrow(\underset{i}{\prod}\mathbb{G}_m)/\mathbb{G}_m(\mathbb{F}_q)\xrightarrow{f}Pic_D(\mathbb{F}_q)\xrightarrow{g} Pic(\mathbb{F}_q)\rightarrow Ext^1(Spec\,\mathbb{F}_q, (\underset{i}{\prod}\mathbb{G}_m)/\mathbb{G}_m)$
\end{center}
Where we wrote the kernel $R_D\cong \underset{i}{\prod}\mathbb{G}_m)/\mathbb{G}_m$ explicitly in the tamely ramified case.
By Hilbert theorem 90 we have $Ext^1(Spec\,\mathbb{F}_q,\mathbb{G}_m)=1$, and by properties of $Ext$, the last element in the sequence above is also trivial.

But $\mathbb{G}_m(\mathbb{F}_q)$ is of order $q-1$, hence $f^*(\xi^{q-1})=(f^*(\xi))^{q-1}$ is trivial on the kernel, so it induces a character of $\eta:Pic(\mathbb{F}_q)\rightarrow \bar{\mathbb{Q}}_{\ell}^{\times}$. By the unramified case proved above, we obtain from $\eta$ an unramified representation $\tau:G_K\rightarrow\bar{\mathbb{Q}}_{\ell}^{\times}$. By the construction of the above subsection, $\xi$ itself gives a representation of $\rho:G_K\rightarrow \bar{\mathbb{Q}}_{\ell}^{\times}$, and we want to bound the ramification of $\rho$. In the tamely ramified case, the theorem states that $\rho$ should be trivial on the ramification groups $G^j_{K_{x_i}}$ for $j\geq1$. We already know from the unramified case, that $\tau=\rho^{q-1}$ is trivial on $G^j_{K_{x_i}}$ for $j\geq0$. But by proposition \re{PHRG}, $G^j_{K_{x_i}}$ for $j\geq1$ are pro-$p$ groups and since $q-1$ is prime to $p$, if $\rho^{q-1}$ is trivial on these group, then so is $\rho$.
\end{proof}

\section{The main theorem}\label{S:TMT}

Here, $X$ is a smooth projective connected curve over a separably closed field $k$. In this section we state the main theorem of this paper, and show why it completes the proof of class field theory. Recall the maps $\phi:X-D\rightarrow Pic_D, act:(X-D)\times Pic_D\rightarrow Pic_D$ from definition \re{PHIDEF} and define:

\begin{defn}
Let $\mathcal{F}$ be a sheaf on $X-D$. A {\em Hecke eigensheaf} on $Pic_D$ with eigenvalue $\mathcal{F}$ is a pair $(E,i)$, where $E$ is a sheaf on $Pic_D$ and $i:act^*E\xrightarrow{\approx}\mathcal{F}\boxtimes E$ is an isomorphism.

An isomorphism of Hecke eigensheaves $(E_1,i_1)$ and $(E_2,i_2)$ is an isomorphism $j:E_1\xrightarrow{\approx}E_2$ such that the following diagram commutes:
\begin{center}
$\begin{CD}
act^*E_1 @>i_1>\approx>\mathcal{F}\boxtimes E_1\\
@V\approx Vact^*jV @V \approx Vid_{\mathcal{F}}\boxtimes jV\\
act^*E_2 @>i_2>\approx>\mathcal{F}\boxtimes E_2
\end{CD}$
\end{center}
\end{defn}

In the unramified case, the statement of the main theorem is the following:

\begin{thm}\label{E:MTU}[The main theorem - unramified case]
For any $\ell$-adic local system of rank 1 $\mathcal{F}$ on $X$, there exists a Hecke eigensheaf $(E_{\mathcal{F}},i_{\mathcal{F}})$ with eigenvalue $\mathcal{F}$ on $Pic$ satisfying $\phi^*E_{\mathcal{F}}=\mathcal{F}$ , it is unique up to unique isomorphism (here $act$ is the same as above, but in the case $[D]=0$).
\end{thm}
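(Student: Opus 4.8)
The plan is to build the Hecke eigensheaf $E_{\mathcal{F}}$ by descending a naturally defined sheaf on a symmetric power of the curve, following Deligne's construction. The key geometric input is that for $d$ large enough the degree-$d$ component $\mathrm{Pic}^d$ is obtained from the symmetric power $X^{(d)} = X^d/S_d$ (which parametrizes effective divisors of degree $d$) as a projective bundle: the Abel–Jacobi map $\sigma_d : X^{(d)} \to \mathrm{Pic}^d$ sending an effective divisor to its class has fibers the complete linear systems $|D_0| \cong \mathbb{P}^{l(D_0)-1}$ by Proposition~\ref{E:UFIBER}, and for $d > 2g-2$ these are all projective spaces of the same dimension $d-g$. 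First I would construct on $X^{(d)}$ the sheaf $\mathcal{F}^{(d)}$: starting from $\mathcal{F}^{\boxtimes d}$ on $X^d$, which carries a natural $S_d$-equivariant structure (here one uses that $\mathcal{F}$ has rank $1$, so the symmetry isomorphisms involve no sign), one descends along the quotient $X^d \to X^{(d)}$ using Proposition~\ref{E:EVLS} applied to this (ramified, but still manageable after restricting to the étale locus and extending over the regular scheme $X^{(d)}$) cover. Concretely $\mathcal{F}^{(d)}$ is characterized by $(\pi_* \mathcal{F}^{\boxtimes d})^{S_d}$.

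The next step is to show $\mathcal{F}^{(d)}$ descends through $\sigma_d$, i.e.\ that it is constant along the fibers $\mathbb{P}^{d-g}$. Since $\mathbb{P}^n$ is simply connected (Example~\ref{E:PSC}), any local system on $\mathbb{P}^n$ is trivial, so it suffices to know $\mathcal{F}^{(d)}$ restricted to a fiber is a local system — which it is, being a local system on all of $X^{(d)}$ — and then, because $\sigma_d$ is a projective bundle (in particular proper, smooth, with geometrically connected fibers), the first homotopy exact sequence (Theorem~\ref{E:FHES}) gives $\pi_1(X^{(d)}) \twoheadrightarrow \pi_1(\mathrm{Pic}^d)$ with the fiber's $\pi_1$ mapping trivially; hence the representation corresponding to $\mathcal{F}^{(d)}$ factors through $\pi_1(\mathrm{Pic}^d)$, defining a rank-$1$ local system $E_{\mathcal{F}}^d$ on $\mathrm{Pic}^d$ with $\sigma_d^* E_{\mathcal{F}}^d = \mathcal{F}^{(d)}$. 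One then defines $E_{\mathcal{F}}$ on all of $\mathrm{Pic}$ by using the action of $\mathrm{Pic}^0$ (translation) and the maps $act_x$ to transport $E_{\mathcal{F}}^d$ between components, checking compatibility; equivalently, one shows the $E_{\mathcal{F}}^d$ for varying large $d$ glue via the multiplication maps $\mathrm{Pic}^d \times \mathrm{Pic}^{d'} \to \mathrm{Pic}^{d+d'}$, using that $\mathcal{F}^{(d)} \boxtimes \mathcal{F}^{(d')}$ pulls back to $\mathcal{F}^{(d+d')}$ under $X^{(d)} \times X^{(d')} \to X^{(d+d')}$.

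With $E_{\mathcal{F}}$ in hand I would verify the Hecke eigenproperty: the map $act : X \times \mathrm{Pic}^d \to \mathrm{Pic}^{d+1}$, $(x,\mathcal{L}) \mapsto \mathcal{O}([x]) \otimes \mathcal{L}$, fits into a diagram with the addition map $X \times X^{(d)} \to X^{(d+1)}$, $(x, D_0) \mapsto x + D_0$, over which $\mathcal{F}^{(d+1)}$ pulls back to $\mathcal{F} \boxtimes \mathcal{F}^{(d)}$ essentially by the very construction of $\mathcal{F}^{(d+1)}$ from $\mathcal{F}^{\boxtimes(d+1)}$; pushing this identity down through the Abel–Jacobi maps (again using simple-connectedness of the projective-space fibers so that nothing is lost) yields $act^* E_{\mathcal{F}} \cong \mathcal{F} \boxtimes E_{\mathcal{F}}$, which is the isomorphism $i_{\mathcal{F}}$. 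The condition $\phi^* E_{\mathcal{F}} = \mathcal{F}$ follows by taking $d=1$ (or by specializing the Hecke isomorphism at the identity section, depending on how $\phi$ relates to $\sigma_1$). Finally, uniqueness up to unique isomorphism: given two eigensheaves, their "difference" is a rank-$1$ local system $E'$ on $\mathrm{Pic}$ with $act^* E' \cong \underline{\bar{\mathbb{Q}}_\ell} \boxtimes E'$, i.e.\ $act_x^* E' \cong E'$ for all $x$ and $\phi^* E'$ trivial; since $\mathrm{Pic}^0$ is generated (as a group, after passing to $\mathbb{F}_q$-points, but here over $k$ one argues via $\pi_1$) by images of $\phi$-differences, $E'$ is trivialized, and the trivialization is unique because $E'$ has no nontrivial automorphisms once rigidified along a point.

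\textbf{Main obstacle.} The hard part is the descent of $\mathcal{F}^{(d)}$ along $\sigma_d$ done \emph{as sheaves} (not just on $\mathbb{F}_q$-points or $k$-points) and, relatedly, checking that all the identifications — the $S_d$-descent, the projective-bundle trivialization of the eigenproperty, the gluing across components — are \emph{canonical and mutually compatible}, so that the resulting $(E_{\mathcal{F}}, i_{\mathcal{F}})$ is genuinely unique up to \emph{unique} isomorphism rather than merely up to isomorphism. This bookkeeping, together with handling small degree $d \le 2g-2$ where $\sigma_d$ is not a projective bundle (one sidesteps this by only constructing $E_{\mathcal{F}}$ on high-degree components first and then extending by translation), is where the real work lies; the topological inputs (Examples~\ref{E:PSC}, Theorem~\ref{E:FHES}, Proposition~\ref{E:EVLS}) are exactly the tools that make each individual step go through.
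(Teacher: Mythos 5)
Your proposal follows essentially the same route as the paper's proof: descend $\mathcal{F}^{\boxtimes d}$ through the $S_d$-quotient (Proposition \ref{E:EVLS}, plus extension over the diagonal) to get $\mathcal{F}^{(d)}$ on $X^{(d)}$, push it down the Abel--Jacobi map for $d\geq 2g-1$ using the simply connected projective-space fibers and the homotopy exact sequence (Theorem \ref{E:FHES}), extend to all components of $Pic$ by transporting along $act_x$ (the paper does this by descending induction, twisting by $\mathcal{F}_x^{*}$), and verify the Hecke property via the diagram relating $act$ to the addition map $X\times X^{(d)}\rightarrow X^{(d+1)}$. Your uniqueness argument is packaged slightly differently (tensoring two eigensheaves' ``difference'') but rests on the same surjectivity/isomorphism of fundamental groups that the paper uses, so the proposal is correct in outline and matches the paper's proof.
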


In the ramified case, the statement is a little more subtle, and we have to distinguish between two cases, according to the characteristic of $k$. Only the case of $Char(k)=p>0$ will be used to prove class field theory and we will prove it in the tamely ramified case, but since also proving the characteristic zero version doesn't add any new difficulties, we will prove it too. But first, let us state them:

\begin{thm}\label{E:MTR0}[The main theorem - ramified case, $Char(k)=0$]
For any $\ell$-adic local system of rank 1 $\mathcal{F}$ on $X-D$, there exists a Hecke eigensheaf $(E_{\mathcal{F}},i_{\mathcal{F}})$ with eigenvalue $\mathcal{F}$ on $Pic_D$ satisfying $\phi^*E_{\mathcal{F}}=\mathcal{F}$, it is unique up to unique isomorphism.
\end{thm}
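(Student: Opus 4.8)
The plan is to mimic the proof of the unramified case (Theorem \re{MTU}, carried out in detail in \rs{PMTU}) almost verbatim, replacing the projective-space fibres of Proposition \re{UFIBER} by the affine-space fibres of Proposition \re{RFIBER}; the hypothesis $Char(k)=0$ will enter at exactly one point, through the simple-connectedness of affine space (Example \re{ASC}). Fix the rank-one $\ell$-adic local system $\mathcal{F}$ on $X-D$. For each $d\geq0$ let $\mathrm{Sym}^d(X-D)$ be the $d$-th symmetric power, parametrizing effective Weil divisors of degree $d$ on $X-D$; since $\mathcal{F}$ has rank one, $\mathcal{F}^{\boxtimes d}$ on $(X-D)^d$ carries a canonical $S_d$-equivariant structure and descends to a rank-one local system $\mathcal{F}^{(d)}$ on $\mathrm{Sym}^d(X-D)$. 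A divisor $D_0$ disjoint from $D$ gives a line bundle $\mathcal{O}(D_0)$ with a tautological trivialization over $D$, so $D_0\mapsto(\mathcal{O}(D_0),\mathrm{can})$ defines an Abel map $\sigma_d:\mathrm{Sym}^d(X-D)\to Pic_D^{d}$ onto the degree-$d$ component, with $\sigma_1=\phi$ from \re{PHIDEF}.

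First I would carry out the descent. By Riemann--Roch, for $d>2g-2+\deg D$ the dimension $l(D_0-[D])$ is the constant $d-\deg D+1-g$, $\sigma_d$ is surjective, and by cohomology and base change it is, \'etale-locally on $Pic_D^{d}$, an affine-space bundle, in particular a fibration whose fibre over the class of $D_0$ is the complete linear system $|D_0|_D\cong\mathbb{A}^{l(D_0-[D])}$ of Proposition \re{RFIBER}. Since $Char(k)=0$, this fibre is simply connected by Example \re{ASC}, so the homotopy exact sequence \re{FHES} yields an isomorphism $\pi_1(\mathrm{Sym}^d(X-D))\xrightarrow{\approx}\pi_1(Pic_D^{d})$; hence $\mathcal{F}^{(d)}$ descends, uniquely, to a rank-one local system $E^{d}$ on $Pic_D^{d}$ with $\sigma_d^*E^{d}\cong\mathcal{F}^{(d)}$. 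The divisor-addition maps $(X-D)\times\mathrm{Sym}^{d}(X-D)\to\mathrm{Sym}^{d+1}(X-D)$ lie over $act$ and pull $\mathcal{F}^{(d+1)}$ back to $\mathcal{F}\boxtimes\mathcal{F}^{(d)}$, so for $d\gg0$ the $E^{d}$ are compatible under $act$.

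Next I would assemble $E_{\mathcal{F}}$ on all of $Pic_D$ and install the Hecke structure. Fixing $x_0\in X-D$, the map $act_{x_0}:Pic_D^{d}\xrightarrow{\approx}Pic_D^{d+1}$ is an isomorphism, so for every $d$ one sets $E_{\mathcal{F}}|_{Pic_D^{d-1}}:=\mathcal{F}_{x_0}^{\vee}\otimes act_{x_0}^*(E^{d})$ for $d$ large; independence of $x_0$ follows from commutativity of the operations $act_x$ together with the degree-$\gg0$ relations already established. The divisor-addition compatibilities of the previous paragraph, transported down the same way, glue to the required isomorphism $i_{\mathcal{F}}:act^*E_{\mathcal{F}}\xrightarrow{\approx}\mathcal{F}\boxtimes E_{\mathcal{F}}$, whose cocycle/associativity condition is checked on $(X-D)\times(X-D)\times Pic_D$. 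Finally $\phi=act(-,\mathcal{O})$, so pulling $i_{\mathcal{F}}$ back along $x\mapsto(x,\mathcal{O})$ gives $\phi^*E_{\mathcal{F}}\cong\mathcal{F}\otimes(E_{\mathcal{F}})_{\mathcal{O}}$; twisting $E_{\mathcal{F}}$ by the line $(E_{\mathcal{F}})_{\mathcal{O}}^{\vee}$ normalizes its stalk at the identity of $Pic_D$ to be trivial, and then $\phi^*E_{\mathcal{F}}=\mathcal{F}$.

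For uniqueness, any eigensheaf $(E',i')$ with $\phi^*E'=\mathcal{F}$ has, by iterating $i'$ along divisor addition, $\sigma_d^*E'\cong\mathcal{F}^{(d)}$ for all $d$; for $d\gg0$ the functor $\sigma_d^*$ is an equivalence on local systems by the $\pi_1$-isomorphism above, so $E'\cong E_{\mathcal{F}}$ canonically on $Pic_D^{d}$, and the Hecke isomorphisms transport this canonically to every component, the normalization at the identity pinning the isomorphism down uniquely. The main obstacle is the geometric input of the second paragraph: checking that $\sigma_d$ is a fibration with precisely the affine-space fibres of Proposition \re{RFIBER} uniformly for $d\gg0$, including the behaviour along $D$, and then the bookkeeping of the equivariance and cocycle data so that $i_{\mathcal{F}}$ is coherent across all components. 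The only genuinely characteristic-dependent step is the appeal to Example \re{ASC}: $\mathbb{A}^n$ is simply connected only in characteristic $0$, which is exactly why the ramified theorem has a separate statement in positive characteristic and is there reduced, in the tame case, to the unramified one in \rs{PMTR}.
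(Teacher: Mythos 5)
Your proposal follows essentially the same route as the paper: descend $\mathcal{F}^{(d)}$ along the Abel--Jacobi map $\mathrm{Sym}^d(X-D)\to Pic_D^d$ for $d-\deg[D]\geq 2g-1$, using Riemann--Roch for surjectivity and the affine-space fibres of Proposition \re{RFIBER}, which are simply connected only because $Char(k)=0$ (Example \re{ASC}), then propagate to all components via translation by $act_x$ (the paper's Proposition \re{Ind}) and check the Hecke property and uniqueness exactly as in \rs{PMTU}. The only cosmetic difference is your explicit normalization twist by $(E_{\mathcal{F}})_{\mathcal{O}}^{\vee}$ to force $\phi^*E_{\mathcal{F}}=\mathcal{F}$, which the paper obtains directly from its construction.
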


\begin{thm}\label{E:MTRP}[The main theorem - ramified case, $Char(k)=p$]
For any $\ell$-adic local system of rank 1 $\mathcal{F}$ on $X-D$ with $ram(\mathcal{F})\leq [D]$, there exists a Hecke eigensheaf $(E_{\mathcal{F}},i_{\mathcal{F}})$ with eigenvalue $\mathcal{F}$ on $Pic_D$ satisfying $\phi^*E_{\mathcal{F}}=\mathcal{F}$, it is unique up to unique isomorphism.
\end{thm}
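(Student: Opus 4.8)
The plan is to rerun the proof of the unramified main theorem (Theorem~\re{MTU}, carried out in detail in \rs{PMTU}) almost verbatim, using Proposition~\re{RFIBER} in place of Proposition~\re{UFIBER} at the single point where the geometry of the fibres of the Abel--Jacobi map enters, and supplying one extra input -- the vanishing of an \emph{a priori} pro-$p$ obstruction along those fibres -- which is exactly where the hypothesis $ram(\mathcal{F})\le[D]$ gets used.

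Uniqueness is insensitive to ramification. If $(E_1,i_1)$ and $(E_2,i_2)$ are Hecke eigensheaves with eigenvalue $\mathcal{F}$ and $\phi^*E_j\cong\mathcal{F}$, then the eigensheaf isomorphisms $i_j$ rigidify the identification of $E_1$ with $E_2$ over $\phi(X-D)$, and since every $\bar{k}$-point of $Pic_D$ is a difference of effective divisors supported on $X-D$, that identification propagates, coherently and uniquely, over all of $Pic_D$ by means of the $i_j$. This is literally the argument of \rs{PMTU}, so I concentrate on existence.

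For existence, fix $d$ in the stable range ($d>2g-2+\deg[D]$) and an effective divisor $D_0$ of degree $d$ with $Supp(D_0)\cap Supp(D)=\emptyset$. On $Sym^d(X-D)$ consider the Abel--Jacobi map to $Pic_D^d$, $D'\mapsto(\mathcal{O}(D'),\text{canonical trivialization over }D)$; by Proposition~\re{RFIBER} its fibre over the class of $D_0$ is the affine space $H^0(X,\mathcal{O}(D_0-[D]))\cong\mathbb{A}^N$, and in the stable range this map is a fibration (in the sense of the definition preceding Theorem~\re{FHES}) with geometrically connected fibres, total space and base. Let $\mathcal{F}^{(d)}$ be the descent to $Sym^d(X-D)$ of $\mathcal{F}\boxtimes\cdots\boxtimes\mathcal{F}$ on $(X-D)^d$ (the $S_d$-equivariant structure descends because $\mathcal{F}$ has rank one). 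Granting that $\mathcal{F}^{(d)}$ restricts trivially to each fibre $\mathbb{A}^N$, the first homotopy exact sequence (Theorem~\re{FHES}) shows that the character of $\pi_1(Sym^d(X-D))$ attached to $\mathcal{F}^{(d)}$ factors through $\pi_1(Pic_D^d)$, so $\mathcal{F}^{(d)}$ descends to an $\ell$-adic local system on $Pic_D^d$. One then extends over all connected components of $Pic_D$ by translating along $act$ and glues using the eigensheaf constraint to obtain $(E_{\mathcal{F}},i_{\mathcal{F}})$ with $\phi^*E_{\mathcal{F}}\cong\mathcal{F}$; this last part is identical to \rs{PMTU}, and it is also how the characteristic-zero statement (Theorem~\re{MTR0}) is obtained, triviality on fibres being automatic there since $\mathbb{A}^N$ is simply connected by Example~\re{ASC}.

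The one genuinely new step, which I expect to be the main obstacle, is the triviality of $\mathcal{F}^{(d)}|_{\mathbb{A}^N}$ when $Char(k)=p>0$, where $\mathbb{A}^N$ is \emph{not} simply connected (Example~\re{APP}). I would compactify the fibre: inside the ordinary complete linear system $|D_0|\cong\mathbb{P}^M$ of Proposition~\re{UFIBER}, the fibre $\mathbb{A}^N=|D_0|_D$ is the complement $\mathbb{P}^N\setminus H_\infty$ of a hyperplane $H_\infty$ in the linear subspace of $\mathbb{P}^M$ cut out by equating the evaluation functionals at the points of $Supp(D)$; and inside $Sym^dX$ this $\mathbb{P}^N$ meets the boundary $\bigcup_{x\in Supp(D)}(x+Sym^{d-1}X)$ of $Sym^d(X-D)$ exactly along $H_\infty$, all the boundary branches passing through $H_\infty$ at once. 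Because $ram(\mathcal{F})\le[D]$ means $\mathcal{F}$ is tamely ramified at each point of $Supp(D)$, the sheaf $\mathcal{F}^{(d)}$ is tamely ramified along each boundary branch; and a small loop in $\mathbb{A}^N$ around $H_\infty$, pushed into $Sym^d(X-D)$, becomes a product of one loop around each branch (near $H_\infty$ the coordinates of the several points approaching $Supp(D)$ are all proportional to the single transverse parameter), so the monodromy of $\mathcal{F}^{(d)}|_{\mathbb{A}^N}$ around $H_\infty$ is a product of tame local monodromies of $\mathcal{F}$ and hence lies in a pro-(prime-to-$p$) subgroup of $\bar{\mathbb{Q}}_\ell^\times$. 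On the other hand the inertia around $H_\infty$ topologically generates $\pi_1(\mathbb{A}^N)^{ab}$ (since $\mathbb{P}^N$ is simply connected, Example~\re{PSC}), which is pro-$p$ by Example~\re{APP}; a pro-$p$ group topologically generated by the image of a pro-(prime-to-$p$) group is trivial, so the rank-one local system $\mathcal{F}^{(d)}|_{\mathbb{A}^N}$ is trivial, as needed. (Equivalently: $\mathcal{F}^{(d)}|_{\mathbb{A}^N}$ is tame along $H_\infty$ with pro-$p$ monodromy, hence unramified there, hence extends across $H_\infty$ to the simply connected $\mathbb{P}^N$ and is trivial.) I expect the fiddly part to be the bookkeeping that identifies $H_\infty$ with the common intersection of the boundary branches and pins down the loop computation; the rest is imported wholesale from the unramified proof.
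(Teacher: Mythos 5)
Your proposal is correct in outline and shares the paper's skeleton (symmetric powers, the Abel--Jacobi map to $Pic_D^d$ with affine-space fibres via Proposition~\ref{E:RFIBER}, descent through the homotopy exact sequence~\ref{E:FHES}, then extension over all of $Pic_D$ and uniqueness exactly as in Section~\ref{S:PMTU}), but at the one genuinely new point --- triviality of $\mathcal{F}^{(d)}$ on the affine-space fibre --- you argue quite differently from the paper. The paper never looks at the internal geometry of the fibre at all: since $ram(\rho)\le[D]$ with $[D]$ reduced, Proposition~\ref{E:PHRG} makes $\rho^{q-1}$ unramified, the already-proved unramified theorem descends $\eta^{q-1}$ to $\pi_1(Pic^d)$ and hence (through the comparison square $Pic_D^d\rightarrow Pic^d$) to $\pi_1(Pic_D^d)$, so $\eta^{q-1}$ is trivial on the fibre, and pro-$p$-ness of $\pi_1(\mathbb{A}^N)^{ab}$ (Example~\ref{E:APP}) together with $(q-1,p)=1$ forces $\eta$ itself to be trivial there. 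You instead compactify the fibre to $\mathbb{P}^N$ inside the full linear system, identify the boundary with the hyperplane $H_\infty$ of divisors containing $[D]$ (your description of that geometry is accurate), and kill the monodromy by tameness along $H_\infty$ combined with pro-$p$-ness, or with purity and $\pi_1(\mathbb{P}^N)=1$. This is a legitimate alternative and arguably more illuminating, since it exhibits the inertia at $H_\infty$ as assembled from the local inertias of $\mathcal{F}$ at the points of $Supp(D)$, which is exactly where the hypothesis enters; but it costs more than the paper's trick. In characteristic $p$ there are no loops, so your ``product of one loop around each branch'' computation must be recast as a statement about inertia at the generic point of $H_\infty$, proved via the \'etale-local product decomposition of $Sym^dX$ along the boundary (the same strict-henselization argument the paper uses to extend $\mathcal{F}^{(d)}$ over the diagonal) and the fact that tameness is preserved under pullback along maps of traits; and the step ``unramified along $H_\infty$, hence extends to $\mathbb{P}^N$, hence trivial'' (equivalently, that inertia at $H_\infty$ normally generates $\pi_1(\mathbb{A}^N)$) requires Zariski--Nagata purity of the branch locus, an input the paper's route avoids entirely. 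Two smaller remarks: your reading of $ram(\mathcal{F})\le[D]$ as tameness at $Supp(D)$ presupposes $[D]$ reduced, which is indeed the only case the paper actually proves; and your assertion that the map is a fibration ``in the stable range'' silently absorbs the surjectivity of $\phi^{(d)}:(X-D)^{(d)}\rightarrow Pic_D^d$, which the paper has to establish separately via the vanishing $H^1(X,\mathcal{L}(-[D]))=0$ at the start of Section~\ref{S:PMTR}.
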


\begin{prop}\label{E:GHEP}
A more general property than the Hecke eigensheaf property actually holds: if $m:Pic_D\times Pic_D\rightarrow Pic_D$ is the group operation: $(\mathcal{L},\mathcal{L}')\mapsto \mathcal{L}\otimes\mathcal{L}'$ then $m^*E_{\mathcal{F}}\cong E_{\mathcal{F}} \boxtimes E_{\mathcal{F}}$.
\end{prop}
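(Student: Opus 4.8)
The plan is to read the desired isomorphism as the extension of the Hecke eigensheaf property from a set of generators of $Pic_D$ to the whole group. The key observation is that $act = m\circ(\phi\times\mathrm{id}_{Pic_D})$, so the structural isomorphism $act^*E_{\mathcal F}\xrightarrow{\approx}\mathcal F\boxtimes E_{\mathcal F}$ is exactly the pullback along $\phi\times\mathrm{id}$ of the statement $m^*E_{\mathcal F}\cong E_{\mathcal F}\boxtimes E_{\mathcal F}$ we want; together with $\phi^*E_{\mathcal F}=\mathcal F$ this already says the proposition holds ``after restricting the first coordinate to the image of $\phi$''. Since $Pic_D(k)\cong Cl_D = Div(X-D)/(D\text{-principal})$ and $Div(X-D)$ is free on the closed points, the classes $\phi(x)=\mathcal O_D([x])$, $x\in|X-D|$, generate $Pic_D(k)$ as an abstract group; and all sheaves in sight — $E_{\mathcal F}$, $m^*E_{\mathcal F}$, $E_{\mathcal F}\boxtimes E_{\mathcal F}$ — are rank-one $\ell$-adic local systems (this is part of what the proof of the main theorem produces), so the whole discussion is about characters of fundamental groups.

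First I would record the translation invariance of $E_{\mathcal F}$. Restricting the eigensheaf isomorphism to $\{x\}\times Pic_D$ gives $act_x^*E_{\mathcal F}\cong\mathcal F_{\bar x}\otimes E_{\mathcal F}$, where $act_x$ is translation by $\phi(x)$ and $\mathcal F_{\bar x}$ is a one-dimensional stalk; hence $act_x^*E_{\mathcal F}\cong E_{\mathcal F}$ up to a one-dimensional twist. Writing any $k$-point $a\in Pic_D$ as a finite product $\prod_i\phi(x_i)^{\pm1}$ and composing the corresponding translations $t_a\colon Pic_D\to Pic_D$, $\mathcal L\mapsto a\otimes\mathcal L$, one gets $t_a^*E_{\mathcal F}\cong E_{\mathcal F}$ up to a scalar twist, for every $k$-point $a$. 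Then I would invoke the Künneth isomorphism $\pi_1(Pic_D^{d_1}\times Pic_D^{d_2})\cong\pi_1(Pic_D^{d_1})\times\pi_1(Pic_D^{d_2})$ on each connected component (the ``basic property of the fundamental functor'' already used in the proof of \re{APP}, with base $\mathrm{Spec}\,k$): it says that a rank-one local system on $Pic_D^{d_1}\times Pic_D^{d_2}$ is the external tensor product of its restrictions to the two axes $Pic_D^{d_1}\times\{a_2\}$ and $\{a_1\}\times Pic_D^{d_2}$. The restriction of $m^*E_{\mathcal F}$ to $Pic_D^{d_1}\times\{a_2\}$ is $t_{a_2}^*E_{\mathcal F}\cong E_{\mathcal F}$ up to a scalar, and likewise on the other axis; hence $m^*E_{\mathcal F}\cong E_{\mathcal F}\boxtimes E_{\mathcal F}$ up to a scalar, which is absorbed because the base is connected. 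Equivalently, $m^*E_{\mathcal F}\otimes(E_{\mathcal F}\boxtimes E_{\mathcal F})^{-1}$ is a rank-one local system trivial on both families of coordinate slices, hence trivial.

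The routine ingredients are the identity $act=m\circ(\phi\times\mathrm{id})$, the description of $Pic_D$ by divisor classes, and elementary manipulation of one-dimensional local systems. The real content lies in two places. First, one must know that $E_{\mathcal F}$ really is a rank-one local system, so that ``a local system on a product is recovered from its coordinate slices'' applies; this must be extracted from the construction behind the main theorems, not just from the eigensheaf axioms. Second, the Künneth statement for $\pi_1$ of the product is exactly what upgrades the slice-wise agreement of $m^*E_{\mathcal F}$ and $E_{\mathcal F}\boxtimes E_{\mathcal F}$ to a genuine isomorphism, and is the nontrivial input; I would cite it rather than prove it. It is worth stressing that in characteristic $p$ with $[D]>0$, where $Pic_D$ has unipotent ($\mathbb G_a$-type) factors on which an arbitrary rank-one local system is \emph{not} translation invariant, the translation invariance of $E_{\mathcal F}$ is genuinely forced by the Hecke eigensheaf property together with the generation of $Pic_D$ by the $\phi(x)$ — this is the one step where the hypothesis that $E_{\mathcal F}$ is a Hecke eigensheaf is essential.
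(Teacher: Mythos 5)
Your route is genuinely different from the paper's, and its load-bearing step is the K\"unneth isomorphism $\pi_1(P_1\times P_2)\cong\pi_1(P_1)\times\pi_1(P_2)$ for connected components $P_i$ of $Pic_D$: that is what lets you pass from agreement of $m^*E_{\mathcal{F}}$ and $E_{\mathcal{F}}\boxtimes E_{\mathcal{F}}$ on one horizontal and one vertical slice to an isomorphism on the whole product. Such a K\"unneth statement is available when one (here: each) factor is proper (\cite[X]{SGAI}), hence in the unramified case, where the components of $Pic$ are abelian varieties, and it also holds in characteristic zero; so for the settings of \re{MTU} and \re{MTR0} your argument works. But in the case where \re{GHEP} is actually used for class field theory --- $Char(k)=p$ and $[D]>0$, where it shows that $\xi=f^{E_{\mathcal{F}}}$ is multiplicative in the proof of \re{RCFT} --- the components of $Pic_D$ are in general not proper (extensions of the Jacobian by an affine group), and K\"unneth for the \'etale fundamental group genuinely fails for products of non-proper varieties in characteristic $p$: already on $\mathbb{A}^1\times\mathbb{A}^1$ (or $\mathbb{G}_m\times\mathbb{G}_m$) the Artin--Schreier covering $y^p-y=x_1x_2$ does not arise from coverings of the factors. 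Concretely, the rank-one Artin--Schreier sheaf attached to $x_1x_2$ is trivial on both coordinate axes $\mathbb{A}^1\times\{0\}$ and $\{0\}\times\mathbb{A}^1$ but is nontrivial, so ``trivial on the two coordinate slices'' does not imply ``trivial'', and your final step collapses precisely in the wild direction you yourself flag at the end. Citing ``the basic property of the fundamental functor'' used in \re{APP} does not help: as a general statement without properness or characteristic-zero hypotheses it is simply false. The earlier steps (the identity $act=m\circ(\phi\times id)$, generation of $Pic_D(k)$ by the classes $\phi(x)$, and translation invariance of $E_{\mathcal{F}}$ up to a one-dimensional twist) are fine and essentially reproduce the paper's relation \form{seceq}, but they do not close this gap.

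For comparison, the paper never invokes K\"unneth on $Pic_D\times Pic_D$: it first proves $m^*E^{d+1}_{\mathcal{F}}\cong E^1_{\mathcal{F}}\boxtimes E^d_{\mathcal{F}}$ for $d\geq 2g-1$ by pulling back along $\phi\times\phi^{(d)}$ and using the symmetric-power identity $h^*\mathcal{F}^{(d+1)}\cong\mathcal{F}\boxtimes\mathcal{F}^{(d)}$ of remark \re{HEPX}, the point being that on the level of symmetric products the external-product structure is visible by construction of $\mathcal{F}^{(d)}$ and the fibers of the Abel--Jacobi maps are simply connected; it then moves to arbitrary pairs of components by applying powers of $act_x$ and the twist relation $act_x^{n*}E^{d+n}_{\mathcal{F}}\cong\mathcal{F}^{\otimes n}_x\otimes E^d_{\mathcal{F}}$, which is the same translation-twist mechanism you use. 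So the two arguments share the reduction between components, but where you ask K\"unneth on the Picard side to upgrade slice-wise information, the paper descends from $X^{(d)}\times X$ instead. To repair your proof you would need a substitute for K\"unneth valid for the non-proper components of $Pic_D$ in characteristic $p$ (for instance, an argument exploiting that all the local systems involved have ramification bounded by $[D]$), and that is exactly the content the slice argument by itself cannot supply.
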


\begin{rem}
In the characteristic zero version, note the absence of a condition about the ramification of the local system $\mathcal{F}$. It implies that the condition on $\mathcal{F}$ doesn't depend on $D$ but only on $Supp(D)$; inspired by this, we can conjecture (and prove) that $\pi_1(Pic_D)$ depends only on $Supp(D)$ too. This is indeed the case.
\end{rem}

\begin{prop}\label{E:CHAR0}
Assume that $Supp(D)=Supp(D')$ are two subschemes and $Char(k)=0$, then $\pi_1(Pic_D)\cong \pi_1(Pic_{D'})$.
\end{prop}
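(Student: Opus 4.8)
The plan is to reduce to the case $D\le D'$ (with equal support) and then to show that the natural map $Pic_{D'}\to Pic_D$ forgetting part of the trivialization induces an isomorphism on fundamental groups; the point is that its kernel is unipotent, and unipotent groups in characteristic $0$ are simply connected. For the reduction, put $S=Supp(D)=Supp(D')$ and let $D_S=\sum_{x\in S}[x]$ be the reduced subscheme supported on $S$. Every effective divisor with support $S$ has all multiplicities $\ge 1$, so $D_S\le D$ and $D_S\le D'$; hence it suffices to prove that for finite subschemes $D_1\le D_2$ with $Supp(D_1)=Supp(D_2)$ one has $\pi_1(Pic_{D_2})\cong\pi_1(Pic_{D_1})$, and then to apply this to $D_S\le D$ and to $D_S\le D'$.

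Since $D_1$ is a closed subscheme of $D_2$, restriction of trivializations along $D_1\hookrightarrow D_2$ defines, exactly as in Proposition \re{FPIC}, a natural homomorphism of algebraic groups $r\colon Pic_{D_2}\to Pic_{D_1}$, compatible with the degree maps. On $k$-points $r$ is surjective (a line bundle is trivial over the artinian scheme $D_2$, so any trivialization over $D_1$ extends over $D_2$ after correcting by an element of $R_{D_2}$, using that $R_{D_2}\twoheadrightarrow R_{D_1}$), and the computation of Proposition \re{FPIC} identifies $\ker r$ with $\ker(R_{D_2}\to R_{D_1})=\prod_{x_i\in S}(1+\mathfrak{m}_{x_i}^{b_i})/(1+\mathfrak{m}_{x_i}^{a_i})$, where $b_i\le a_i$ are the multiplicities of $x_i$ in $D_1$, $D_2$. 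Call this group $K$; it is connected and unipotent of dimension $N=\sum_i(a_i-b_i)$, being an iterated extension of copies of $\mathbb{G}_a$ (the quotients $(1+\mathfrak{m}^{j})/(1+\mathfrak{m}^{j+1})$ are all $\cong\mathbb{G}_a$). As elsewhere in this text, the scheme-theoretic form of the exact sequence $1\to K\to Pic_{D_2}\to Pic_{D_1}\to 1$ is obtained from the functor-of-points description of the generalized Picard schemes, as in the remark following Proposition \re{FPIC}.

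Now $Char(k)=0$, so the truncated logarithm $u\mapsto\log(1+u)$ is a polynomial isomorphism $K\xrightarrow{\approx}\prod_{x_i\in S}\mathfrak{m}_{x_i}^{b_i}/\mathfrak{m}_{x_i}^{a_i}\cong\mathbb{G}_a^{N}$; in particular $K\cong\mathbb{A}_k^{N}$ as a scheme, so $\pi_1(K)=1$ by Example \re{ASC}. Moreover $K$ is smooth, so the $K$-torsor $r$ is \'etale-locally trivial, hence a fibration; restricting to identity components it yields a fibration $Pic_{D_2}^{0}\to Pic_{D_1}^{0}$ of geometrically connected varieties with geometrically connected fibre $K$. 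The first homotopy exact sequence (Theorem \re{FHES}) then gives
\[
\pi_1(K)\longrightarrow\pi_1(Pic_{D_2}^{0})\longrightarrow\pi_1(Pic_{D_1}^{0})\longrightarrow 1,
\]
and since $\pi_1(K)=1$ the middle arrow is an isomorphism. As the components of $Pic_D$ are translates of $Pic_D^{0}$, this is precisely $\pi_1(Pic_{D_2})\cong\pi_1(Pic_{D_1})$ in the convention adopted in the text, and combined with the reduction it proves the proposition.

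The homotopy bookkeeping above is routine; the substantive point is the scheme-level input — that $r$ really is a morphism of algebraic groups with the asserted unipotent kernel, and that it is a fibration — which, just as for Propositions \re{FPIC}, \re{UFIBER} and \re{RFIBER}, rests on the functorial description of the generalized Picard schemes and would be developed in parallel with those arguments rather than recopied here.
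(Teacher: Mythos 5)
Your proposal is correct and follows essentially the same route as the paper: compare the generalized Picard schemes via the forgetful map, identify its kernel with an affine space, invoke simple connectedness of affine space in characteristic $0$ (Example \re{ASC}) and the first homotopy exact sequence (Theorem \re{FHES}), deferring the scheme-level assertions to the functorial description as in Proposition \re{FPIC}. The only difference is bookkeeping: the paper shrinks the divisor one point at a time so that each kernel is $(1+\mathfrak{m}_x^{n})/(1+\mathfrak{m}_x^{n+1})\cong\mathbb{A}^1$, whereas you compare $D$ and $D'$ directly to the reduced subscheme $D_S$ and identify the whole kernel with $\mathbb{A}^N$ via the truncated logarithm — a harmless and slightly cleaner packaging of the same argument.
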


\begin{proof}
Since we can pass through any two such divisors by a finite sequence of adding and deleting a point $[x]$, without loss of generality it is enough to prove the theorem for the case $[D']=[D]+[x]$. Where, of course, $x\in Supp(D)$.

If we use the first exact sequence of homotopy \re{FHES} and the fact that the affine space (in characteristic zero) is simply connected \re{ASC}, it is enough to prove that the kernel of the map $Pic_{D'}\rightarrow Pic_D$ is $\mathbb{A}^1$. We consider this map as a map of groups, though it is actually a map of algebraic groups and to show it precisely the functorial approach must be used as we already stated above in proposition \re{FPIC} and its remark.

Write $[D]=\sum n_ix_i+nx$, the map $g:Pic_{D'}\rightarrow Pic_D$ is defined to be $g(\mathcal{L},\psi)=(\mathcal{L},\psi|_D)$, therefore an element is in the kernel if $\mathcal{L}\cong\mathcal{O}$ and the restriction of the trivialization to $D$ is the identity.

A trivialization over $D'$ is an automorphism of  $\mathcal{O}_x/\mathfrak{m}_x^{n+1}\times \prod \mathcal{O}_{x_i}/\mathfrak{m}_{x_i}^{n_i}$ preserving each factor and we are looking for trivializations such that the restriction to $D$ is the identity, i.e. the automorphism is trivial when restricted to $\mathcal{O}_x/\mathfrak{m}_x^{n}\times \prod \mathcal{O}_{x_i}/\mathfrak{m}_{x_i}^{n_i}$. Such trivializations are addition of elements from $a\in \mathfrak{m}_x^{n}$: $b\in \mathcal{O}_x/\mathfrak{m}_x^{n+1}\mapsto a+b\in\mathcal{O}_x/\mathfrak{m}_x^{n+1}$ where we ignore the rest of the factors which are irrelevant for this discussion. Of course, the automorphism $a$ is trivial iff $a\in \mathfrak{m}_x^{n+1}$, therefore the kernel is simply $\mathfrak{m}_x^{n}/\mathfrak{m}_x^{n+1}$ but since $\mathcal{O}_x$ is a DVR we get that the latter is a one dimensional $k$ vector space so it is non-canonically isomorphic (as an additive group) to $k$, which can be identified with $\mathbb{A}^1$.

\end{proof}

\begin{proof}[Proof of existence in theorem \re{RCFT} (2)]
Let $\rho:G_K\rightarrow\bar{\mathbb{Q}}_{\ell}^{\times}$ be a continuous character with $ram(\rho)\leq [D]$ as given in the theorem. Here $X$ is a smooth projective connected curve defined over $\mathbb{F}_q$ with function field $K$. $\rho$ gives rise to a representation of $\pi_1(X-D)$,  with ramification bounded by $[D]$. By theorem \re{LSRL}, it is the same as $\ell$-adic local system $\mathcal{F}_{\rho}$ on $X-D$ with ramification bounded by $[D]$, pulling it back using the map $\pi_{X-D}:\overline{X-D}=(X-D)\underset{Spec\,\mathbb{F}_q}{\times}Spec\,\bar{\mathbb{F}}_q\rightarrow X-D$ by proposition \re{FQWS} we obtain a Weil local system $(\bar{\mathcal{F}}_{\rho},\psi_{\rho})$ on $\overline{X-D}$. Hence, by theorem \re{MTRP}, there exists a unique local system $E_{\bar{\mathcal{F}}_{\rho}}$ on $Pic_D(\bar{X})$ such that $act^*E_{\bar{\mathcal{F}}_{\rho}}\cong\bar{\mathcal{F}}_{\rho}\boxtimes E_{\bar{\mathcal{F}}_{\rho}}$. 

\begin{note}
To make notations simpler, let us denote the geometric Frobenii: $F_{\overline{X-D}},\,F_{Pic_D(\bar{X})}$ and $F_{\overline{X-D}\times Pic_D(\bar{X})}$ by $F_X,\,F_P$ and $F_{XP}$  respectively.
\end{note}

We want a structure of a Weil sheaf on $E_{\bar{\mathcal{F}}_{\rho}}$. We must establish an isomorphism $j:F^*_PE_{\bar{\mathcal{F}}_{\rho}}\xrightarrow{\approx}E_{\bar{\mathcal{F}}_{\rho}}$.
If we show that $F^*_PE_{\bar{\mathcal{F}}_{\rho}}$ also satisfies the property: $act^*F^*_PE_{\bar{\mathcal{F}}_{\rho}}\cong \bar{\mathcal{F}}_{\rho}\boxtimes F^*_PE_{\bar{\mathcal{F}}_{\rho}}$ then we obtain the isomorphism $j$ from the uniqueness assumption of the theorem.

Indeed, since the Frobenius is functorial (remark \re{FAF} (2)), it commutes with $act$, so we obtain an isomorphism $act^*F^*_PE_{\bar{\mathcal{F}}_{\rho}}\cong F_{XP}^*act^*E_{\bar{\mathcal{F}}_{\rho}}$. By the property of $E_{\bar{\mathcal{F}}_{\rho}}$ the latter is isomorphic to $F_{XP}^* (\bar{\mathcal{F}}_{\rho}\boxtimes E_{\bar{\mathcal{F}}_{\rho}})$. Again by functoriality, the Frobenius commutes with projection, so this is the same as $F_X^*\bar{\mathcal{F}}_{\rho}\boxtimes F_P^*E_{\bar{\mathcal{F}}_{\rho}}$. But since $\bar{\mathcal{F}}_{\rho}$ is a Weil sheaf we get an isomorphism to $\bar{\mathcal{F}}_{\rho}\boxtimes F_P^*E_{\bar{\mathcal{F}}_{\rho}}$. Composing all these isomorphisms we get the required isomorphism: $act^*F^*_PE_{\bar{\mathcal{F}}_{\rho}}\cong \bar{\mathcal{F}}_{\rho}\boxtimes F^*_PE_{\bar{\mathcal{F}}_{\rho}}$. Therefore $E_{\bar{\mathcal{F}}_{\rho}}$ is a Weil sheaf.

\begin{center}
$\xymatrix{
F_{XP}^*act^*E_{\bar{\mathcal{F}}_{\rho}}\ar[r]\ar[rd]\ar@/^2pc/[rrr]^{F^*_{XP}i}&
act^*F^*_PE_{\bar{\mathcal{F}}_{\rho}}\ar[r]\ar[d]&
\bar{\mathcal{F}}_{\rho}\boxtimes F_P^*E_{\bar{\mathcal{F}}_{\rho}}\ar[d]&
F_{XP}^*(\bar{\mathcal{F}}_{\rho}\boxtimes E_{\bar{\mathcal{F}}_{\rho}})\ar[l]\ar[ld]
\\
&
act^* E_{\bar{\mathcal{F}}_{\rho}}\ar[r]^{i}&
\bar{\mathcal{F}}_{\rho}\boxtimes E_{\bar{\mathcal{F}}_{\rho}}
}$
\end{center}

We claim that the isomorphism $act^* E_{\bar{\mathcal{F}}_{\rho}}\cong \bar{\mathcal{F}}_{\rho}\boxtimes E_{\bar{\mathcal{F}}_{\rho}}$ is an isomorphism of Weil sheaves. Indeed, all the maps in the above diagram are isomorphisms. The middle square commutes since $j:F^*_PE_{\bar{\mathcal{F}}_{\rho}}\xrightarrow{\approx}E_{\bar{\mathcal{F}}_{\rho}}$ is an isomorphism of Hecke eigensheaves. The curved arrow also commutes with the upper row since the Frobenius commutes with pullback and tensor product. The diagonal arrows are defined as the composition of the corresponding maps such that the triangles will also commute, therefore the diagram is commutative. But by definition of the Weil structure on the tensor product and pullback (\re{TPWS} and \re{PBWS} respectively), the diagonal arrows are exactly the action of Frobenius on $act^* E_{\bar{\mathcal{F}}_{\rho}}$ and $\bar{\mathcal{F}}_{\rho}\boxtimes E_{\bar{\mathcal{F}}_{\rho}}$. Therefore the commutativity of the big curved trapeze is exactly the statement that $i$ is an isomorphism of Weil sheaves. It will follow later from the construction, that the isomorphism $m^*E_{\mathcal{F}}\cong E_{\mathcal{F}} \boxtimes E_{\mathcal{F}}$ of the generalized Hecke eigensheaf property \re{GHEP} will also be an isomorphism of Weil sheaves.

We can now finish the proof, let us define the character $\xi$ by: $\xi=f^{E_{\mathcal{F}}}:K^{\times}\backslash\mathbb{A}_K^{\times}/\mathbb{O}^{\times}_D\cong Pic_D(\mathbb{F}_q)\rightarrow \bar{\mathbb{Q}}_{\ell}^{\times}$, we want to show that $\xi$ is indeed a homomorphism. To do this, we use the generalized Hecke eigensheaf property \re{GHEP}.  $E_{\mathcal{F}}$ has a structure of a Weil sheaf, and therefore so do $m^*E_{\mathcal{F}}$ and $E_{\mathcal{F}} \boxtimes E_{\mathcal{F}}$. The isomorphism of the Hecke eigensheaf property $m^*E_{\mathcal{F}}\cong E_{\mathcal{F}} \boxtimes E_{\mathcal{F}}$ is an isomorphism of Weil sheaves on $Pic_D\times Pic_D$. Therefore, the functions that they define should be equal, so let us calculate their value on the point $(x,y)$ where $x,y\in Pic_D(\mathbb{F}_q)$. By remark \re{RSFC},  $f^{m^*E_{\mathcal{F}}}(x,y)=f^{E_{\mathcal{F}}}(xy)=\xi(xy)$, and by the same remark $f^{E_{\mathcal{F}}\boxtimes E_{\mathcal{F}}}(x,y)=f^{E_{\mathcal{F}}}(x)f^{E_{\mathcal{F}}}(y)=\xi(x)\xi(y)$. Hence, $\xi(xy)=\xi(x)\xi(y)$.

The last thing remains to prove is that  $\rho(Fr_v)=\xi(\pi_v)$ for all primes $v\notin supp(D)$.  Let $x\in X-D$ be the point corresponds to $v$, in particular it is defined over $\mathbb{F}_q$. There exists an isomorphism of Weil sheaves on $(X-D)\times Pic_D$: $act^*E_{\mathcal{F}}\cong \mathcal{F}\boxtimes E_{\mathcal{F}}$. Hence the functions these Weil sheaves defines should be equal, so let us calculate their value on the point $(x,1)\in (X-D)(\mathbb{F}_q)\times Pic_D(\mathbb{F}_q)$ using remark \re{RSFC}.
$f^{act^*E_{\mathcal{F}}}(x,1)=f^{E_{\mathcal{F}}}(act(x,1))= f^{E_{\mathcal{F}}}(\mathcal{O}([x]))=\xi(\pi_v)$ where the last equality holds since $[x]$ corresponds to $\pi_v$ under the isomorphism between $Cl_D$ and $K^{\times}\backslash\mathbb{A}_K^{\times}/\mathbb{O}^{\times}_D$. On the other hand, by the same remark and since $\xi$ is homomorphism $f^{\pi_1^*\mathcal{F}\otimes \pi_2^*E_{\mathcal{F}}}(x,1)=f^{\mathcal{F}}(x)f^{E_{\mathcal{F}}}(1)= f^{\mathcal{F}}(x)\xi(1)=f^{\mathcal{F}}(x)$. But $f^{\mathcal{F}}(x)=\rho(Fr_v)$ by remark \re{RXSF} (2). We conclude that $\rho(Fr_v)=\xi(\pi_v)$ and we are done.
\end{proof}

\section{Proof of the main theorem - unramified case}\label{S:PMTU}

In this section we prove our main theorem in the unramified case, theorem \re{MTU}. The proof will proceed according to the following steps:

\begin{enumerate}
\item
We first construct a local system $\mathcal{F}^{(d)}$ on the symmetric product $X^{(d)}$.
\item
Using the map $\phi^{(d)}:X^{(d)}\rightarrow Pic^d(X)$, we give a local system on $Pic^d(X)$.
\item
By the eigensheaf condition, we will extend it to a local system $E_{\mathcal{F}}$ all $Pic(X)$.
\end{enumerate}

This section is divided into three subsections, devoted for each part of the proof.

\subsection{Construction of local system on the symmetric product}

We start by studying some properties of the symmetric product.

\begin{prop}
If $X,Y$ are (quasi) projective varieties, then so is $X\times Y$.
\end{prop}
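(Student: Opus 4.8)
The plan is to reduce everything to projective spaces by means of the Segre embedding. Since by definition a quasi-projective variety is a locally closed subvariety of some $\mathbb{P}^n$, it suffices to establish two things: that $\mathbb{P}^n \times \mathbb{P}^m$ is projective, and that inside it the product of two locally closed subsets is again locally closed.

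For the first point I would introduce the Segre map $\sigma\colon \mathbb{P}^n \times \mathbb{P}^m \to \mathbb{P}^{(n+1)(m+1)-1}$ given in homogeneous coordinates by $([x_i],[y_j]) \mapsto [x_i y_j]$, and then check the standard list of properties: it is well defined (independent of the chosen representatives), injective, its image is the closed set cut out by the quadrics $z_{ij}z_{kl} - z_{il}z_{kj}$, and it restricts to isomorphisms on the standard affine charts of $\mathbb{P}^n \times \mathbb{P}^m$. This makes $\sigma$ a closed immersion, so $\mathbb{P}^n \times \mathbb{P}^m$ is isomorphic to a closed subvariety of a projective space, hence projective. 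All of this is classical and can be quoted from \cite[Chapter I, Exercise 2.14]{Ha}.

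For the general statement I would write $X = \bar X \cap U \subseteq \mathbb{P}^n$ and $Y = \bar Y \cap V \subseteq \mathbb{P}^m$ with $\bar X, \bar Y$ closed and $U, V$ open. Then $X \times Y = (\bar X \times \bar Y) \cap (U \times V)$ is the intersection of a closed subset with an open subset of $\mathbb{P}^n \times \mathbb{P}^m$, hence locally closed; composing with the closed immersion $\sigma$ realizes $X \times Y$ as a locally closed subvariety of a projective space, so it is quasi-projective. When $X$ and $Y$ are actually projective, i.e. one may take $U = \mathbb{P}^n$ and $V = \mathbb{P}^m$, the set $X \times Y = \bar X \times \bar Y$ is closed in $\mathbb{P}^n \times \mathbb{P}^m$ and is therefore projective.

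The only step that requires any genuine work is the verification that $\sigma$ is a closed immersion — that its image is precisely the vanishing locus of the $2 \times 2$ minors and that $\sigma$ is an isomorphism onto that image — but this is a routine computation carried out chart by chart, and I would simply cite it rather than reproduce it here.
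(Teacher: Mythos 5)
Your argument is correct and follows essentially the same route as the paper, which also proves the statement via the Segre embedding (citing \cite[1.7]{Hu} for the details you spell out). Your additional explanation of how the locally closed case reduces to the projective one is accurate and fills in exactly what the citation leaves implicit.
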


\begin{proof}
Using the Segre embedding, details can be found in \cite[1.7]{Hu}.
\end{proof}

\begin{prop}
Let $X$ be a (quasi) projective variety with an action of a finite group $G$. then the quotient $X/G$ exist, and it is a (quasi) projective variety.
\end{prop}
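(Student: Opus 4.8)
The plan is to build $X/G$ by gluing affine quotients of the form $\operatorname{Spec}(A^{G})$, and then to establish quasi-projectivity separately by means of an equivariant projective embedding.

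\emph{Reduction to the affine case.} First I would reduce to the case where $X$ is affine. Given $x\in X$, its orbit $Gx$ is a finite set of points, and a quasi-projective variety has the property that any finite set of points lies in a common affine open $U$; then $\bigcap_{g\in G}gU$ is a $G$-invariant open neighbourhood of $x$, and it is affine since an intersection of affine opens in a separated variety is affine. Thus $X$ admits a $G$-invariant affine cover, and it suffices to construct the quotient in the affine case and to check that the construction is local enough to glue.

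\emph{The affine quotient.} For $X=\operatorname{Spec} A$, with $A$ a finitely generated $k$-algebra carrying a $G$-action, I would set $X/G:=\operatorname{Spec}(A^{G})$. The ingredients are: (i) $A$ is integral over $A^{G}$, since each $a\in A$ is a root of the monic polynomial $\prod_{g\in G}(T-ga)\in A^{G}[T]$; (ii) $A^{G}$ is a finitely generated $k$-algebra, by Noether's argument — if $a_{1},\dots,a_{m}$ generate $A$ over $k$ and $B\subseteq A^{G}$ is the subalgebra generated by the coefficients of the integral equations of the $a_{i}$, then $A$ is a finite $B$-module, $B$ is Noetherian by Hilbert's basis theorem, hence the $B$-submodule $A^{G}\subseteq A$ is a finite $B$-module and therefore finitely generated over $k$; (iii) the morphism $\pi:X\to X/G$ is finite and surjective (going-up for $A^{G}\subseteq A$), its fibres over closed points are exactly the $G$-orbits (if two maximal ideals lie in distinct orbits, prime avoidance yields $a$ in one but in no $G$-translate of the other, and the norm $\prod_{g}ga\in A^{G}$ then separates them), and $\pi$ is a categorical quotient in the category of varieties. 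Since $A_{f}=A_{f'}$ is $G$-stable for $f'=\prod_{g}gf\in A^{G}$ and $(A_{f})^{G}=(A^{G})_{f'}$, these affine quotients are compatible with passage to basic opens, so they glue to a variety $X/G$ equipped with a finite surjective $\pi:X\to X/G$ having the expected universal property; moreover every $G$-stable open of $X$ gives an open subvariety of $X/G$.

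\emph{Quasi-projectivity.} This is the delicate step. Let $Y=\bar X$ be the closure of $X$ in a projective space; it is a $G$-stable projective variety containing $X$ as a $G$-stable open, so $X/G$ is an open subvariety of $Y/G$ and it is enough to show $Y/G$ is projective. I would take a very ample line bundle $L$ on $Y$ and pass to the norm bundle $M=\bigotimes_{g\in G}g^{*}L$, which carries a $G$-linearization (permute the tensor factors) and is again very ample; this produces a $G$-equivariant closed embedding $Y\hookrightarrow\mathbb{P}(V)$ with $V=H^{0}(Y,M)^{*}$ a finite-dimensional representation of $G$ on which $G$ acts linearly. With $S=\bigoplus_{d\geq0}H^{0}(Y,\mathcal{O}(d))$ the resulting homogeneous coordinate ring — a graded ring with $G$-action — the invariant ring $S^{G}$ is again finitely generated and graded by the Noether argument, and I claim $Y/G=\operatorname{Proj}(S^{G})$, which is projective: for $d$ sufficiently divisible the invariant sections $S^{G}_{d}$ define a morphism $Y\to\mathbb{P}((S^{G}_{d})^{*})$ that is constant on $G$-orbits and which, since $\mathcal{O}(d)$ is very ample and $G$ is finite, separates distinct orbits and kills no tangent direction transverse to an orbit; hence it factors through a closed immersion of $Y/G$.

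\emph{Main obstacle.} The affine construction (Noether's finite-generation theorem together with the elementary commutative algebra of integral extensions) and the gluing are routine. The hard part will be the quasi-projectivity: producing the $G$-equivariant projective embedding via the norm bundle $\bigotimes_{g}g^{*}L$, and then verifying that enough $G$-invariant sections of a high power of $\mathcal{O}(1)$ exist to separate orbits and their normal directions, so that $\operatorname{Proj}(S^{G})$ genuinely represents the quotient. One should also note that in characteristic dividing $|G|$ the Reynolds operator is unavailable, but finite generation of $A^{G}$ still follows from the integrality argument, which is why I would organize the proof around that argument throughout.
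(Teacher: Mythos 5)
The paper offers no argument of its own here---it simply cites Harris, \emph{Algebraic geometry: a first course}, Lecture 10 (\cite{Harr})---so your proposal has to stand on its own, and most of it does. The reduction to a $G$-stable affine cover (a finite orbit in a quasi-projective variety lies in a common affine open, and finite intersections of affine opens are affine by separatedness), the affine quotient $\operatorname{Spec}(A^{G})$ with Noether finite generation deduced from integrality (valid in every characteristic), finiteness and surjectivity of $\pi$, fibres equal to orbits, and the gluing over invariant basic opens is exactly the standard route. One small slip: for $f'=\prod_{g}gf$ one has $D(f')\subseteq D(f)$, not $A_{f}=A_{f'}$; what you actually need, and what is true, is $(A^{G})_{f'}\cong (A_{f'})^{G}$ together with the fact that the $D(f')$, $f'$ invariant, form a basis of $G$-stable opens.

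The genuine gap is the first sentence of your quasi-projectivity step: ``let $Y=\bar X$ be the closure of $X$ in a projective space; it is a $G$-stable projective variety.'' The $G$-action is given on $X$ only, and an automorphism of $X$ need not extend to its closure in an arbitrarily chosen embedding. For example, take $X=\mathbb{A}^{2}\subseteq\mathbb{P}^{2}$ and the involution $(x,y)\mapsto (x,\,x^{2}-y)$: in homogeneous coordinates this is $[X:Y:Z]\mapsto[XZ:X^{2}-YZ:Z^{2}]$, undefined at $[0:1:0]$, and it cannot agree with any automorphism of $\mathbb{P}^{2}$ since those are linear; so the closure $\mathbb{P}^{2}$ carries no induced $G$-action, and the bundles $g^{*}L$ on $Y$ you want to take a norm of do not exist. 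The repair is to run your norm-bundle idea on $X$ itself, \emph{before} taking any closure: choose $L$ very ample on $X$, set $M=\bigotimes_{g\in G}g^{*}L$, choose a finite-dimensional subspace $W\subseteq H^{0}(X,M)$ defining an immersion (for non-projective $X$ the full space $H^{0}(X,M)$ is usually infinite-dimensional, so writing $V=H^{0}(Y,M)^{*}$ presupposes the completion you are trying to build), and replace $W$ by the span of its $G$-translates, which is still finite-dimensional, is $G$-stable, and still defines an immersion. This gives a $G$-equivariant locally closed embedding of $X$ into a projective space with a linear $G$-action; now the closure $Y$ \emph{is} $G$-stable, $X\subseteq Y$ is a $G$-stable open, and the projective case finishes the proof---most cleanly not by checking separation of points and tangent vectors on the possibly singular $Y/G$, but by identifying the glued quotient of $Y$ chart by chart with $\operatorname{Proj}(S^{G})$, which is projective because $S^{G}$ is a finitely generated graded $k$-algebra with $(S^{G})_{0}=k$. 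With the steps reordered in this way your argument is correct and is the standard proof.
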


\begin{proof}
\cite[Lecture 10]{Harr}.
\end{proof}

\begin{con}
Let $X$ be a smooth, (quasi) projective connected algebraic curve. Then according to what we just proved, the power $X^d$ is a (quasi) projective and (clearly) connected algebraic variety of dimension $d$. The symmetric group $S_d$ acts on this product by permuting the points, we get quotient which is (quasi) projective connected algebraic variety of dimension $d$, it is called the symmetric power of $X$ and denoted $X^{(d)}$.
\end{con}

We now want to show that:

\begin{prop}\label{E:SPS}
$X^{(d)}$ is nonsingular.
\end{prop}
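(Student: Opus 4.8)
The plan is to work étale-locally on $X^{(d)}$ and reduce to the statement that a symmetric power of a smooth curve is smooth, which is essentially a local computation in one variable. First I would recall that smoothness is an étale-local property, so it suffices to exhibit, around each point of $X^{(d)}$, an étale neighborhood that is smooth. A point of $X^{(d)}$ is an effective divisor $D_0 = \sum m_j y_j$ of degree $d$ on $X$, where the $y_j$ are distinct closed points. Since the $y_j$ are distinct, we can choose disjoint open neighborhoods $U_j$ of the $y_j$ in $X$; then $\prod_j U_j^{(m_j)}$ is an open neighborhood of $D_0$ in $X^{(d)}$ (a divisor near $D_0$ splits uniquely as a sum of divisors supported near each $y_j$). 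So the problem is local: it is enough to show $X^{(m)}$ is nonsingular near the divisor $m\cdot y$ for a single point $y$, i.e. to understand $X^{(m)}$ near the "most degenerate" point.

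Next I would pass to the completed local ring: since $X$ is a smooth curve, $\widehat{\mathcal O}_{X,y} \cong k[[t]]$, and étale-locally $X$ near $y$ looks like $\mathbb{A}^1$ near the origin. Hence $X^{(m)}$ near $m\cdot y$ is étale-locally isomorphic to $(\mathbb{A}^1)^{(m)} = \mathbb{A}^m / S_m$ near the origin. The key classical fact is then that $\mathbb{A}^m / S_m \cong \mathbb{A}^m$: the ring of symmetric polynomials $k[x_1,\dots,x_m]^{S_m}$ is a polynomial ring on the elementary symmetric functions $e_1,\dots,e_m$ (this is the fundamental theorem of symmetric functions, valid over any base ring, in particular in any characteristic). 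Concretely, sending an unordered $m$-tuple of points to the coefficients of the monic polynomial $\prod (T - x_i)$ identifies $(\mathbb{A}^1)^{(m)}$ with the affine space of monic degree-$m$ polynomials, which is $\mathbb{A}^m$. Therefore $X^{(m)}$ is étale-locally isomorphic to $\mathbb{A}^m$ near $m\cdot y$, hence smooth there; combining with the product decomposition from the first step, $X^{(d)}$ is nonsingular everywhere.

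The main obstacle — and the only place one must be slightly careful — is that this is positive characteristic: the quotient map $X^d \to X^{(d)}$ is ramified along the diagonals, so one cannot argue that $X^{(d)}$ is smooth "because it is an étale quotient of a smooth variety." The content is genuinely that $\mathbb{A}^m / S_m$ happens to be smooth despite the ramification, and this rests on the symmetric-functions theorem (which does hold in all characteristics) rather than on any averaging or invariant-theory argument requiring $|S_m|$ invertible. Everything else — smoothness being étale-local, the identification of $\widehat{\mathcal O}_{X,y}$ with $k[[t]]$, and the splitting of a divisor near $D_0$ into pieces supported near its distinct points — is routine. I would therefore structure the write-up as: (i) reduce to a neighborhood of a point using disjointness of support; (ii) reduce to $(\mathbb{A}^1)^{(m)}$ by the local structure of a smooth curve; (iii) invoke the identification $(\mathbb{A}^1)^{(m)} \cong \mathbb{A}^m$ via elementary symmetric functions; and one may cite \cite{Ha} or a standard reference for the last step.
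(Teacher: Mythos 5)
Your overall strategy is in essence the same as the paper's: everything reduces to the fundamental theorem of symmetric functions applied in local coordinates at a point taken with multiplicities, and your steps (i) (splitting a neighborhood of $D_0=\sum m_j y_j$ as $\prod_j U_j^{(m_j)}$ using disjoint opens) and (iii) (characteristic-free smoothness of $\mathbb{A}^m/S_m$) are fine. The genuine gap is in step (ii), namely the inference ``$X$ is \'etale-locally $\mathbb{A}^1$ near $y$, hence $X^{(m)}$ is \'etale-locally $(\mathbb{A}^1)^{(m)}$ near $m\cdot y$.'' Forming symmetric powers does not preserve \'etale morphisms, so this is not a formal consequence of the local structure of a smooth curve. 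For instance, in characteristic $\neq 2$ the \'etale map $f\colon\mathbb{G}_m\to\mathbb{G}_m$, $x\mapsto x^2$, induces on symmetric squares the map $(e_1,e_2)\mapsto(e_1^2-2e_2,\,e_2^2)$, whose Jacobian $4e_1e_2$ vanishes along $e_1=0$; thus $f^{(2)}$ fails to be \'etale at the points $[x]+[-x]$ even though $f$ is \'etale everywhere. What saves your argument is that you only need the claim at the totally degenerate point $m\cdot y$, where it is true --- but proving it there amounts to a comparison of completed local rings, $\widehat{\mathcal{O}}_{X^{(m)},\,m\cdot y}\cong\bigl(\widehat{\mathcal{O}}_{X,y}^{\hat{\otimes} m}\bigr)^{S_m}\cong k[[x_1,\dots,x_m]]^{S_m}\cong k[[e_1,\dots,e_m]]$, which is exactly the computation the paper carries out, combined with the criterion that a Noetherian local ring is regular iff its completion is (Theorem \re{RNR}).

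So you should either justify step (ii) explicitly --- e.g.\ show that for an \'etale coordinate $g\colon X\to\mathbb{A}^1$ at $y$ the induced map $X^{(m)}\to(\mathbb{A}^1)^{(m)}$ is \'etale at $m\cdot y$ by checking it induces an isomorphism of completed local rings --- or drop the \'etale reformulation and argue on completions directly, as the paper does; in the latter case your reduction (i) is not even needed, since grouping equal coordinates handles an arbitrary point of $X^{(d)}$ in one stroke. Your closing remark that the argument must rest on the symmetric-functions theorem rather than on invariant-theoretic averaging (which would require $|S_m|$ invertible) is correct and is precisely the point the paper's proof relies on as well.
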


\begin{rem}
This is not true for the higher dimensional case.
\end{rem}

To prove this we must state first some well-known results.

\begin{thm}\label{E:RNR}
A Noetherian local ring is regular iff its completion is.
\end{thm}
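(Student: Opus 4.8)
The plan is to deduce the equivalence from the standard characterization of regularity: a Noetherian local ring $(R,\mathfrak n)$ with residue field $k_R=R/\mathfrak n$ is regular precisely when $\dim_{k_R}\mathfrak n/\mathfrak n^2=\dim R$ (equivalently, when the associated graded ring $\operatorname{gr}_{\mathfrak n}(R)$ is a polynomial algebra over $k_R$ in $\dim R$ variables). So it suffices to check that passing from a Noetherian local ring $(A,\mathfrak m)$ to its completion $(\hat A,\hat{\mathfrak m})$ changes neither side of this numerical criterion.

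First I would record the two invariance facts. Since $\hat A=\varprojlim A/\mathfrak m^{\,n}$ is again Noetherian local with $\hat{\mathfrak m}=\mathfrak m\hat A$, completion induces isomorphisms $A/\mathfrak m^{\,n}\xrightarrow{\approx}\hat A/\hat{\mathfrak m}^{\,n}$ for every $n$ (each $A/\mathfrak m^{\,n}$ is Artinian, hence already complete), and in particular $A/\mathfrak m\cong\hat A/\hat{\mathfrak m}$ and $\mathfrak m/\mathfrak m^2\cong\hat{\mathfrak m}/\hat{\mathfrak m}^2$ as vector spaces over this common residue field; taking successive quotients even gives a canonical isomorphism of graded algebras $\operatorname{gr}_{\mathfrak m}(A)\cong\operatorname{gr}_{\hat{\mathfrak m}}(\hat A)$. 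Consequently $A$ and $\hat A$ have the same Hilbert--Samuel function, so $\dim A=\dim\hat A$ (this last point also follows from faithful flatness of $\hat A$ over $A$). All of these are standard facts from commutative algebra and may be cited from \cite{AM}.

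Combining the two, $\dim_{A/\mathfrak m}\mathfrak m/\mathfrak m^2=\dim A$ if and only if $\dim_{\hat A/\hat{\mathfrak m}}\hat{\mathfrak m}/\hat{\mathfrak m}^2=\dim\hat A$, which by the criterion quoted above says exactly that $A$ is regular if and only if $\hat A$ is. The only substantive ingredients are the invariance of the residue field, of the cotangent space $\mathfrak m/\mathfrak m^2$, and of the Krull dimension under completion; these rest on the Artin--Rees lemma and the exactness of completion on finitely generated modules over a Noetherian ring, so there is no real obstacle beyond assembling these preliminaries.
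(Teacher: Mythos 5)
Your argument is correct, and it is essentially the proof of the result the paper cites: the paper simply refers to \cite[Proposition 11.24]{AM}, whose proof runs exactly through the invariance of the residue field, of $\mathfrak{m}/\mathfrak{m}^2$ (indeed of the whole associated graded ring), and of the Krull dimension under completion, combined with the numerical criterion $\dim_k\mathfrak{m}/\mathfrak{m}^2=\dim A$ for regularity. So you have correctly supplied the standard argument that the paper leaves to the reference.
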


\begin{proof}
\cite[Proposition 11.24]{AM}.
\end{proof}

\begin{thm}[The fundamental theorem of symmetric polynomials]
The algebra of symmetric polynomials over a field $k$ in $n$ variables is isomorphic to the free algebra of rank $n$ over $k$: $k[x_1,...,x_n]\cong k[x_1,...,x_n]^{S_n}$. After a completion at the ideals generated by $x_1,...,x_n$ it follows that also $k[[x_1,...,x_n]]\cong k[[x_1,...,x_n]]^{S_n}$.
\end{thm}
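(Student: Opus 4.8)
The statement is equivalent to the assertion that the $k$-algebra homomorphism $\varphi\colon k[y_1,\dots,y_n]\to k[x_1,\dots,x_n]^{S_n}$ sending $y_i$ to the $i$-th elementary symmetric polynomial $e_i=e_i(x_1,\dots,x_n)$ is an isomorphism: surjectivity means the $e_i$ generate the invariant ring, injectivity means the $e_i$ are algebraically independent. I would establish both with a single monomial-order argument. Order the monomials of $k[x_1,\dots,x_n]$ lexicographically; this is a total order compatible with multiplication, and for a nonzero polynomial $f$ let $\mathrm{LM}(f)$ be its largest monomial. The key computation is $\mathrm{LM}(e_i)=x_1x_2\cdots x_i$, so by multiplicativity of $\mathrm{LM}$ one gets $\mathrm{LM}(e^b):=\mathrm{LM}(e_1^{b_1}\cdots e_n^{b_n})=x^{\mu(b)}$ with $\mu(b)_i=b_i+b_{i+1}+\dots+b_n$; the map $b\mapsto\mu(b)$ is a bijection from $\mathbb{Z}_{\ge 0}^n$ onto the weakly decreasing $n$-tuples of nonnegative integers, with inverse $b_i=\mu_i-\mu_{i+1}$ (setting $\mu_{n+1}=0$).

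For injectivity, given a nonzero relation $\sum_b c_b e^b=0$, pick $b_0$ with $c_{b_0}\ne 0$ and $\mu(b_0)$ lexicographically maximal among such $b$; then $x^{\mu(b_0)}$ occurs in $e^{b_0}$ with coefficient $1$, while every monomial of $e^{b'}$ with $b'\ne b_0$ is $\le_{\mathrm{lex}}x^{\mu(b')}<_{\mathrm{lex}}x^{\mu(b_0)}$, so $x^{\mu(b_0)}$ cannot cancel --- a contradiction. For surjectivity it suffices to treat $f$ homogeneous of degree $d$, since each homogeneous component of a symmetric polynomial is symmetric; if $f\ne 0$ is symmetric then $\mathrm{LM}(f)=x^\mu$ with $\mu$ weakly decreasing (the lex-largest permutation of its exponent vector), and $e^{b(\mu)}$ is homogeneous of degree $\sum_i i\,b(\mu)_i=\sum_i\mu_i=d$ with the same leading monomial, so subtracting the appropriate scalar multiple strictly lowers $\mathrm{LM}$ while preserving symmetry. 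Finitely many monomials of degree $d$ exist, so this terminates and exhibits $f\in k[e_1,\dots,e_n]$. Hence $\varphi$ is an isomorphism. (Alternatively one could argue via Galois theory: each $x_i$ is integral over $k[e_1,\dots,e_n]$, $k(x_1,\dots,x_n)/k(e_1,\dots,e_n)$ is Galois with group $S_n$, and $k[e_1,\dots,e_n]$ is integrally closed --- but the monomial-order proof is more self-contained.)

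For the completed statement, note each $e_i$ is homogeneous of positive degree, so the $S_n$-action on $k[[x_1,\dots,x_n]]$ respects the degree filtration; consequently every homogeneous component, and hence every finite truncation, of an invariant power series is an invariant polynomial. Therefore $k[[x_1,\dots,x_n]]^{S_n}$ is exactly the completion of $k[x_1,\dots,x_n]^{S_n}=k[e_1,\dots,e_n]$ for the topology induced from the $(x_1,\dots,x_n)$-adic topology on $k[[x_1,\dots,x_n]]$. That induced topology coincides with the $(e_1,\dots,e_n)$-adic one: the containment $e_i\in(x_1,\dots,x_n)^i$ gives one inclusion, and for the other I would invoke that $k[x_1,\dots,x_n]$ is a finite free $k[e_1,\dots,e_n]$-module with a homogeneous basis containing $1$ (e.g. the monomials $x_1^{a_1}\cdots x_n^{a_n}$ with $0\le a_i\le n-i$), which forces $(x_1,\dots,x_n)^N\cap k[e_1,\dots,e_n]\subseteq(e_1,\dots,e_n)$ once $N$ exceeds the top degree of a basis element. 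Completing the isomorphism $\varphi$ with respect to the $(y)$-adic, resp. $(e)$-adic, topologies then gives $k[[x_1,\dots,x_n]]^{S_n}\cong k[[y_1,\dots,y_n]]\cong k[[x_1,\dots,x_n]]$.

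The leading-monomial argument is elementary and characteristic-free, so the only point requiring real care is the topology comparison in the completed statement: one must know $k[x_1,\dots,x_n]$ is module-finite (indeed free) over $k[e_1,\dots,e_n]$ so that the $(x)$-adic and $(e)$-adic filtrations are cofinal on the invariant subring. I regard this --- rather than the passage from polynomials to power series, which is immediate from the grading and needs no averaging or hypothesis on $\mathrm{char}(k)$ --- as the main obstacle.
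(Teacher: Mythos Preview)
Your proof is correct. The paper itself does not prove this theorem at all: it simply cites \cite[Theorem 3.89]{Vi} (Vinberg, \emph{A course in algebra}) and moves on, since the result is needed only as an input to the smoothness of $X^{(d)}$. The lexicographic leading-monomial argument you give is the classical proof and is almost certainly what the cited reference contains, so in effect you have written out the argument the paper chose to outsource.

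One minor remark on the completed statement: your cofinality claim is phrased only for $M=1$ (you argue $(x_1,\dots,x_n)^N\cap k[e_1,\dots,e_n]\subseteq (e_1,\dots,e_n)$ for $N$ large), whereas what is needed is that for every $M$ there is an $N$ with $(x)^N\cap k[e]\subseteq (e)^M$. This actually follows directly from the grading, without appealing to the finite-free module structure: a monomial $e_1^{a_1}\cdots e_n^{a_n}$ has $x$-degree $\sum_i i\,a_i$, so if that degree is at least $N$ then $\sum_i a_i\ge N/n$ and the monomial lies in $(e)^{\lceil N/n\rceil}$. Hence the $(x)$-adic and $(e)$-adic filtrations on $k[e_1,\dots,e_n]$ are cofinal simply because the $e_i$ are homogeneous of positive degree; the finite freeness of $k[x_1,\dots,x_n]$ over $k[e_1,\dots,e_n]$, while true and interesting, is not required at this step.
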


\begin{proof}
\cite[Theorem 3.89]{Vi}.
\end{proof}

\begin{proof}[Proof of proposition \re{SPS}]

We can now prove the proposition, first recall that smoothness is preserved under change of the base field, therefore $X^{(d)}\rightarrow Spec\,k$ is smooth iff $X^{(d)}\underset{Spec\,k}{\times}Spec\,\bar{k}\rightarrow Spec\,\bar{k}$ so we can assume without loss of generality that $k$ is algebraically closed.

By theorem \re{RNR} it is enough to prove that the completion of the local ring at any point of $X^{(d)}$ is regular.

We check the point $x=(x_1...,x_d)$ of $X^d$. Without loss of generality assume that
\begin{center}
$y_1:=x_1=x_2=...=x_{d_1}\neq y_2:=x_{d_1+1}=...=x_{d_2-d_1}\neq...\neq y_m:=x_{d_{m-1}+1}=...=x_d$
\end{center}
otherwise we can switch between two coordinates without changing the local ring (by commutativity of the tensor product).

The completion of the local ring at a point $x_i$ is: $\hat{\mathcal{O}}_{X,x_i}\cong k[[x]]$, and in the product the completion is simply:

\begin{center}
$\hat{\mathcal{O}}_{X^d,x}=k[[x_1]]\hat{\otimes}...\hat{\otimes}k[[x_d]]\cong k[[x_1,...,x_d]]$
\end{center}

A function on the symmetric product is just a symmetric function, therefore the completion of the local ring is:

\begin{center}
$\hat{\mathcal{O}}_{X^{(d)},x}=k[[x_1,...,x_d]]/{(S_{d_1}\times...\times S_{d_m})}\cong k[[x_1,...,x_{d_1}]]/S_{d_1}\hat{\otimes}...\hat{\otimes} k[[x_{d_{m-1}+1},...,x_d]]/S_{d_m}$
\end{center}

But by the symmetric polynomial theorem the latter is isomorphic to:
\begin{center}
$k[[x_1,...,x_{d_1}]]\hat{\otimes}...\hat{\otimes} k[[x_{d_{m-1}+1},...,x_d]]\cong k[[x_1,...,x_d]]$
\end{center}
which is regular, therefore $X^{(d)}$ is nonsingular at $x$ and we are done.

\end{proof}

Let us begin now with the process of constructing a local system. From a $n$-dimensional $\ell$-adic local system $\mathcal{F}$ on $X$, we can get a $n^d$-dimensional local system $\mathcal{F}^{\boxtimes d}$ on $X^d$. Our next goal is to provide a local system $\mathcal{F}^{(d)}$ on the symmetric product $X^{(d)}$.

To do this, consider the quotient morphism $\pi:X^d\rightarrow X^{(d)}$, and define:

\begin{defn}
We define the {\em diagonal} of $X^d$ to be the set of points with at least two coordinates equal. We denote it by $diag$. By abuse of notation, its image under $\pi$ will also denoted by $diag$.
\end{defn}

$S_d$ acts freely on $X^d-diag$ (by definition $diag$ is the set of fixed points of nontrivial permutations). By \cite[IV.2.3]{Kn} ,restricting $\pi$ to the complement of the diagonal we obtain a finite Galois covering with Galois group $S_d$, $\pi|_{X^d-diag}:X^d-diag\rightarrow X^{(d)}-diag$. But $\mathcal{F}^{\boxtimes d}|_{X^d-diag}$ is a $G$-equivariant sheaf. Indeed, in the notation of the definition of $G$-equivariant sheaf \re{ESD}, to define the isomorphism $\alpha$ we must establish for each $g\in S_d$ and $f:U\rightarrow X^d$ \'etale compatible isomorphisms $\mathcal{F}^{\boxtimes d}(g^{-1}U)\xrightarrow{\approx} \mathcal{F}^{\boxtimes d}(U)$, where $g^{-1}U$ is the composition $g^{-1}\circ f:U\rightarrow X^d$ but from the commutativity of the following diagram:

\begin{center}
$\xymatrix{
g^{-1}U=U\ar@{=}[r]\ar[d]^{g^{-1}\circ f}&
U\ar[d]^{f}&
\\
X^d\ar[r]^{g}\ar[d]^{\pi_i}&
X^d\ar[d]^{\pi_{g(i)}}&
\\
X\ar@{=}[r]&
X
}$
\end{center}

We see that $(g^{-1}\circ f)^*\pi_i^*\mathcal{F}= f^*\pi_{g(i)}^*\mathcal{F}$. Hence we can define the isomorphism $\mathcal{F}^{\boxtimes d}(g^{-1}U)=\overset{d}{\underset{i=1}{\bigotimes}}(\pi_i^*\mathcal{F})(g^{-1}U) \xrightarrow{\approx} \mathcal{F}^{\boxtimes d}(U)=\overset{d}{\underset{i=1}{\bigotimes}}(\pi_i^*\mathcal{F})(U)$ to be simply the permutation of the factors.

Therefore, by proposition \re{EVLS} and its remark, we have a local system on $X^{(d)}-diag$: $\mathcal{F}^{(d)}=(\pi_*\mathcal{F}^{\boxtimes d})^{S_d}$ and $\pi^*\mathcal{F}^{(d)}=\mathcal{F}^{\boxtimes d}$.

The next step is to extend the local system to the diagonal, since local systems are equivalent to representations of the fundamental group, by theorem \re{SSC2}, it is enough to prove that the local system extends to an open subscheme $U$ of $X^{(d)}$ with $codim(X^{(d)}-U,X^{(d)})\geq2$. We take $U$ to be the set of all points where at most two coordinates are equal. Indeed, $X^{(d)}-U$ is of codimension 2, since the complement of the inverse image of $U$ in $X^d$ is a finite union of subschemes defined by two independent equations (the equations $\{(x_1,...,x_n):x_i=x_j=x_k\}$ or $\{(x_1,...,x_n):x_i=x_j,  x_k=x_l\}$). Therefore, and because the question is local, it will be sufficient to extend the local system to a point $x$ with at most two coordinates equal.

We can now assume without loss of generality that $x=(x_1,...,x_d)$ where $x_1=x_2\neq x_3...\neq x_d$, and we want to extend the local system to $x$. By the gluing property of sheaves, if we can extend the sheaf locally over an \'etale covering, we can glue the extensions to get a a globally defined sheaf, therefore the question of extending a local system is an \'etale local question and we can pass to strict Henselization. Let us denote $\tilde{X}_{x_i}=Spec\,\mathcal{O}^{sh}_{X,x_i}$ the strict Henselization at the point $x_i$. The restriction of $\mathcal{F}$ under $\tilde{X}_{x_i}\rightarrow X$ is constant since $\mathcal{F}$ is locally constant. We must show that the local system extends to $(\tilde{X}_{x_1}\times...\times \tilde{X}_{x_d})/S_d\rightarrow(X\times...\times X)/S_d$. But since only $x_1=x_2$ it can be written as $(\tilde{X}_{x_1}\times\tilde{X}_{x_2})/S_2\times \tilde{X}_{x_3}\times...\times \tilde{X}_{x_d}$ and we get a commutative diagram:
\begin{center}
$\xymatrix{
\tilde{X}_{x_1}\times ...\times \tilde{X}_{x_d}\ar[r]^<<<<{r_d}\ar[d] &
(\tilde{X}_{x_1}\times\tilde{X}_{x_2})/S_2\times \tilde{X}_{x_3}\times...\times \tilde{X}_{x_d}\ar[d]
\\
\tilde{X}_{x_1}\times \tilde{X}_{x_2}\ar[r]^<<<<<<<<<<<{r_2}&
(\tilde{X}_{x_1}\times \tilde{X}_{x_2})/S_2
}$
\end{center}

Where the horizontal maps are quotients and the vertical are projections. Let $q_i:\tilde{X}_{x_1}\times ...\times \tilde{X}_{x_d}\rightarrow \tilde{X}_{x_i}$ be the projections. The sheaf on $\tilde{X}_{x_1}\times ...\times \tilde{X}_{x_d}$ is defined as $q_1^*\mathcal{F}|_{\tilde{X}_{x_1}}\otimes...\otimes q_d^*\mathcal{F}|_{\tilde{X}_{x_d}}$. Similarly, a sheaf on $\tilde{X}_{x_1}\times \tilde{X}_{x_2}$ can be defined as $\mathcal{G}^{\boxtimes 2}=q_1^*\mathcal{F}|_{\tilde{X}_{x_1}}\otimes q_2^*\mathcal{F}|_{\tilde{X}_{x_2}}$. If we will extend $\mathcal{G}^{(2)}=({r_2}_*\mathcal{G}^{\boxtimes 2})^{S_2}$ to the diagonal, then we can define a sheaf $\mathcal{F}^{(d)}|_{\tilde{X}_{x_1}\times ...\times \tilde{X}_{x_d}}=\mathcal{G}^{(2)}\otimes q_3^*\mathcal{F}|_{\tilde{X}_{x_3}}\otimes...\otimes q_d^*\mathcal{F}|_{\tilde{X}_{x_d}}$ extending the sheaf $({r_d}_*\mathcal{F}^{\boxtimes d})^{S_d}$ to the diagonal. It is indeed an extension, since the map $r_d$ can be written as $r_2\times id$ and the inverse of $\mathcal{G}^{(2)}$ and $q_3^*\mathcal{F}|_{\tilde{X}_{x_3}}\otimes...\otimes q_d^*\mathcal{F}|_{\tilde{X}_{x_d}}$ under these maps gives $q_1^*\mathcal{F}|_{\tilde{X}_{x_1}}\otimes q_2^*\mathcal{F}|_{\tilde{X}_{x_2}}$ and $q_3^*\mathcal{F}|_{\tilde{X}_{x_3}}\otimes...\otimes q_d^*\mathcal{F}|_{\tilde{X}_{x_d}}$ respectively, so we are reduced to the case $d=2$.

In the case $d=2$, we must extend the local system to the point $(x,x)\in X^{(2)}$. This is an \'etale local statement, hence we can pass to the strict Henselization. Since $\mathcal{F}$ is locally constant, its restriction to $\tilde{X}_x$ will be constant. Therefore we can assume that $\mathcal{F}$ is constant. It implies that $\mathcal{F}^{\boxtimes 2}$ is also constant (as tensor product of restrictions of constant sheaves). Now, by the equivalence of categories \re{EVLS} $\mathcal{F}^{(2)}$ is the unique sheaf that its inverse image under $\pi$ is the constant equivariant sheaf $(\mathcal{F}^{\boxtimes 2},\alpha)$ where $\alpha:p_2^*\mathcal{F}^{\boxtimes 2}\xrightarrow{\approx}p_1^*\mathcal{F}^{\boxtimes 2}$ acts permuting the factors as explained above (notation as in the definition of equivariant sheaf, \re{ESD}). But we know such a sheaf, namely the constant sheaf on $X^{(2)}-diag$. Therefore $\mathcal{F}^{(2)}$ is the constant sheaf and a constant sheaf is clearly extendable.

The extension is unique: local systems are equivalent to representations, from proposition \re{FGSM} it follows that the map $i_*:\pi_1(X^{(d)}-diag)\rightarrow \pi_1(X^{(d)})$ is surjective. Therefore if $\rho:\pi_1(X^{(d)})\rightarrow \bar{\mathbb{Q}}^*_{\ell}$ is a representation then it is uniquely determined by the restriction $(\rho\circ i_*):\pi_1(X^{(d)}-diag)\rightarrow \bar{\mathbb{Q}}^*_{\ell}$, since $\rho(a)=(\rho\circ i_*)(i_*^{-1}(a))$.

\begin{rem}\label{E:HEPX}
If $h:X\times X^{(d)}=X^{d+1}/S_d\rightarrow X^{(d+1)}=X^{d+1}/S_{d+1}$ is the quotient map (which is  defined on the points by $h(x,D)=D+[x]$) then $h^*\mathcal{F}^{(d+1)}\cong\mathcal{F}\boxtimes\mathcal{F}^{(d)}$. Indeed, consider the diagram:

\begin{center}
$\begin{CD}
X\times X^d @>id \times\pi^{(d)}>> X\times X^{(d)}\\
@| @VhVV\\
X^{d+1} @>\pi^{(d+1)}>> X^{(d+1)}
\end{CD}$
\end{center}

From the proof that $\mathcal{F}^{(d)}$ extends to the diagonal, we can see that this extension is unique, so it is enough to prove the assertion outside the diagonal. To make notation easier, from now on in this remark, we will write simply $X^d,X^{(d)}$ for $X^d-diag,X^{(d)}-diag$. Consider the Galois covering $ id\times\pi^{(d)}:X\times X^d\rightarrow X\times X^{(d)}$, by the equivalence between sheaves on $X\times X^{(d)}$ and equivariant sheaves on $X\times X^d$, it is enough to prove that the restrictions $(id \times \pi^{(d)})^*h^*\mathcal{F}^{(d+1)}\cong(id \times\pi^{(d)})^*(\mathcal{F}\boxtimes\mathcal{F}^{(d)})$ are isomorphic as $S_d$-equivariant sheaves.

The restrictions are $\mathcal{F}^{\boxtimes (d+1)}$ and $\mathcal{F}\boxtimes\mathcal{F}^{\boxtimes d}$ which are clearly isomorphic as sheaves (we denote this natural isomorphism by $i$). But since the equivariant structure on $\mathcal{F}^{\boxtimes d}=\pi^{(d)}\mathcal{F}^{(d)}$ is permutation of the factors as explained above, we know that the equivariant structure on $(id\times\pi^{(d)})^* \mathcal{F}\boxtimes\mathcal{F}^{(d)}\mathcal{F}\boxtimes\mathcal{F}^{\boxtimes d}$ is permutation of the last $d$ factors. But the equivariant structure on $\pi^{(d+1)*}\mathcal{F}^{(d+1)}=\mathcal{F}^{\boxtimes (d+1)}$ as an $S_{d+1}$-equivariant sheaf is also permutation of the factors. But $\pi^{(d+1)}$ is the composition: $X^{d+1}\xrightarrow{id\times \pi^{(d)}}X\times X^d/S_d\xrightarrow{h}X^{d+1}/S_{d+1}  $ therefore as an $S_d$-equivariant sheaf, the equivariant structure on $\mathcal{F}^{\boxtimes (d+1)}$ is also permutation of the last $d$ factors (the action of the subgroup $S_d<S_{d+1}$). So both sheaves have the same equivariant structure and we are done.

\end{rem}

\subsection{Local system on $Pic^d(X)$}\label{SS:LSPD}

\begin{note}
Recall that $Pic^d(X)$ is the connected component in $Pic(X)$ of degree $d$ line bundles on $X$.

Caution: don't confuse with $Pic_D(X)$.
\end{note}

\begin{defn}\label{E:AJDEF}
Note that a point of $X^{(d)}$ is an unordered $d$-tuple of points of $X$, which is exactly an effective Weil divisor of degree $d$, $D\in Cl^d(X)$. So at least in the level of points, $X^{(d)}$ classifies Weil divisors of degree $d$.
Therefore, on the level of points we can define the Abel-Jacobi map $\phi^{(d)}:X^{(d)}\rightarrow Pic^d$, it is defined by $D'\mapsto \mathcal{O}(D')$ where $D'$ is any effective divisor on $X$, $D'=\underset{x}{\sum}d_x[x]$ with $\sum_xd_x=d$. The functorial language, which we are not using here, shows that this is a map of schemes as in \cite[\S3]{Kl}.
\end{defn}

We want to use the Abel-Jacobi this map to get from our local system $\mathcal{F}^{(d)}$ a local system $E_{\mathcal{F}}^d$ on $Pic^d$. So we must calculate the fiber of this map, to do this, we will use the Riemann-Roch theorem, whose statement and proof can be found in \cite[Chapter IV, Theorem 1.3]{Ha}.

As a simple corollary of the Riemann-Roch theorem we obtain that if $degD'=d\geq2g-1$ (here $g$ is the genus of $X$) then $l(D')=degD'+1-g=d+1-g$. Proposition \re{UFIBER} implies that in this case the fiber of $\phi^{(d)}$ over $\mathcal{L}$, which is the complete linear system of effective divisor linearly equivalent to $\mathcal{L}$, is the projective space $\mathbb{P}^{d-g}$.


In the case $d\geq2g-1$ the map $\phi^{(d)}$ is a fibration with fiber $\mathbb{P}^{d-g}$, therefore by the first exact sequence of homotopy (theorem \re{FHES}), we have the exact sequence:

\begin{center} $\pi_1(\mathbb{P}^{d-g})\rightarrow\pi_1(X^{(d)})\rightarrow\pi_1(Pic^d)\rightarrow1$
\end{center}
but by example \re{PSC} the projective space is simply connected. So we obtain an isomorphism ${\phi^{(d)}}_*:\pi_1(X^{(d)})\xrightarrow{\approx}\pi_1(Pic^d)$. Then for any representation of $\rho$ of $\pi_1(X^{(d)})$ there exists a representation $\rho'=\rho\circ({\phi^{(d)}}_*)^{-1}$ of $\pi_1(Pic^d)$, and we get $\rho'\circ{\phi^{(d)}}_*=\rho$. It is equivalent to state it in terms of local systems: if we take $\rho$ to be the representation associated with $\mathcal{F}^{(d)}$. Define $E_{\mathcal{F}}^d$ to be the local system associated with $\rho'$. Then the relation $\rho'\circ{\phi^{(d)}}_*=\rho$ implies that $\mathcal{F}^{(d)}={\phi^{(d)}}^*E_{\mathcal{F}}^d$.
We therefore have the required local system on $Pic^d$.

\subsection{Local system on $Pic(X)$}

We now have a local system on $\underset{d\geq 2g-1}{\bigcup}Pic^d$. Restricted to this part of $Pic$, we claim that there exists an isomorphism $i_{\mathcal{F}}:act^*E_{\mathcal{F}}\xrightarrow{\approx}\mathcal{F}\boxtimes E_{\mathcal{F}}$, this is the Hecke eigensheaf property, or if we want to be more precise and pay attention to the degrees, we claim:  $i_{\mathcal{F}}:act^*E^{d+1}_{\mathcal{F}}\xrightarrow{\approx}\mathcal{F}\boxtimes E^d_{\mathcal{F}}$.

Indeed, the statement is that $i_{\mathcal{F}}$ is an isomorphism of local systems on $X\times Pic^d$. Recall that the fibers of the map $id\times \phi^{(d)}:X\times X^{(d)}\rightarrow X\times Pic^d$ are simply connected, therefore by the first exact sequence of homotopy \re{FHES} and the correspondence between local systems and representations \re{RLSC}, the map $\mathcal{G}\mapsto (id\times \phi^{(d)})^*\mathcal{G}$ is an equivalence of categories between local systems on $X\times Pic^d$ and local systems on $X\times X^{(d)}$. So it is enough to prove that the restrictions are isomorphic: $(id\times \phi^{(d)})^*act^*E^{d+1}_{\mathcal{F}}\cong(id\times \phi^{(d)})^*(\mathcal{F}\boxtimes E^d_{\mathcal{F}})$. Consider the (clearly commutative) diagram:

\begin{center}
$\xymatrix{
X\times X^{(d)}\ar[r]^{h} \ar[d]^{id\times \phi^{(d)}}&
X^{(d+1)}\ar[d]^{\phi^{(d+1)}}
\\
X\times Pic^d\ar[r]^{act}&
Pic^{d+1}
}$
\end{center}
where $h$ is as in remark \re{HEPX} and $\phi^{(d)}$ the Abel-Jacobi map defined in \re{AJDEF}. By commutativity and by remark \re{HEPX} we indeed obtain that:

\begin{center}
$(id\times \phi^{(d)})^*act^*E^{d+1}_{\mathcal{F}}\cong h^*(\phi^{(d+1)})^*E^{d+1}_{\mathcal{F}}\cong h^*\mathcal{F}^{(d+1)}\cong \mathcal{F}\boxtimes \mathcal{F}^{(d)}\cong(id\times \phi^{(d)})^*(\mathcal{F}\boxtimes E^d_{\mathcal{F}})$
\end{center}

It only remains to extend the local system to $Pic^d$, where $d<2g-1$. We argue by descending induction.

\begin{prop}\label{E:Ind}
Assume that there exists a local system $E_{\mathcal{F}}$ defined on $\underset{d\geq N+1}{\bigcup}Pic^d$ satisfying the Hecke eigensheaf property then $E_{\mathcal{F}}$ can be extended uniquely to a local system on $\underset{d\geq N}{\bigcup}Pic^d$ which also satisfies the Hecke eigensheaf property.
\end{prop}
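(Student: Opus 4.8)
The plan is to manufacture $E_{\mathcal{F}}^{N}$ on $Pic^{N}$ out of the already-built $E_{\mathcal{F}}^{N+1}$ by transporting it along the translation isomorphism $Pic^{N}\xrightarrow{\approx}Pic^{N+1}$ coming from a point of $X$, and then to check that the result does not depend on the point and that the Hecke eigensheaf identity holds at the new degree. First I would record a rigidity fact for rank-one systems on products: since $X$ and each $Pic^{d}$ are proper and geometrically connected over the separably closed field $k$, the fiber-product formula for $\pi_{1}$ used in Example \re{APP} (applied over $Spec\,k$) gives $\pi_{1}(A\times B)\cong\pi_{1}(A)\times\pi_{1}(B)$ for $A,B$ among $X$, $X\times X$ and the $Pic^{d}$; as a character of a direct product is uniquely the product of its restrictions to the factors, every rank-one local system $\mathcal{G}$ on such an $A\times B$ is isomorphic to $(\mathcal{G}|_{A\times\{b\}})\boxtimes(\mathcal{G}|_{\{a\}\times B})$, with the two factors pinned down up to isomorphism. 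I will call this \emph{box-uniqueness}.

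Next, put $\mathcal{G}:=act^{*}E_{\mathcal{F}}^{N+1}$ on $X\times Pic^{N}$, which is legitimate because $Pic^{N+1}$ is in the range $\{d\geq N+1\}$ where $E_{\mathcal{F}}$ is already defined, and write $\mathcal{G}\cong\mathcal{A}\boxtimes\mathcal{B}$ with $\mathcal{A}$ a rank-one system on $X$ and $\mathcal{B}$ one on $Pic^{N}$. Everything hinges on the identification $\mathcal{A}\cong\mathcal{F}$. To obtain it, consider $\mu:X\times X\times Pic^{N}\to Pic^{N+2}$, $(x,y,\mathcal{L})\mapsto\mathcal{O}([x]+[y])\otimes\mathcal{L}$. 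On one hand $\mu=act\circ(\mathrm{id}_{X}\times act)$, so pulling back $E_{\mathcal{F}}^{N+2}$ and using the Hecke identity at degree $N+1$, namely $act^{*}E_{\mathcal{F}}^{N+2}\cong\mathcal{F}\boxtimes E_{\mathcal{F}}^{N+1}$ (which is among the hypotheses), together with $\mathcal{G}=act^{*}E_{\mathcal{F}}^{N+1}$, gives $\mu^{*}E_{\mathcal{F}}^{N+2}\cong\mathcal{F}\boxtimes\mathcal{A}\boxtimes\mathcal{B}$. On the other hand $\mu$ is literally invariant under the transposition $\tau$ of the two $X$-factors, because $[x]+[y]=[y]+[x]$ as divisors, so $\mathcal{F}\boxtimes\mathcal{A}\boxtimes\mathcal{B}\cong\tau^{*}(\mathcal{F}\boxtimes\mathcal{A}\boxtimes\mathcal{B})=\mathcal{A}\boxtimes\mathcal{F}\boxtimes\mathcal{B}$. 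Restricting to a slice $X\times X\times\{\mathcal{L}_{0}\}$ and invoking box-uniqueness yields $\mathcal{A}\cong\mathcal{F}$.

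With $\mathcal{A}\cong\mathcal{F}$ in hand I would set $E_{\mathcal{F}}^{N}:=\mathcal{B}$; equivalently, for any chosen $k$-point $x_{0}\in X$ one has $E_{\mathcal{F}}^{N}\cong(\mathcal{F}_{x_{0}})^{\otimes-1}\otimes act_{x_{0}}^{*}E_{\mathcal{F}}^{N+1}$, which simultaneously exhibits the independence of $x_{0}$. The tautological isomorphism $act^{*}E_{\mathcal{F}}^{N+1}=\mathcal{G}\cong\mathcal{A}\boxtimes\mathcal{B}\cong\mathcal{F}\boxtimes E_{\mathcal{F}}^{N}$ is then the required Hecke eigensheaf datum $i_{\mathcal{F}}$ at degree $N$, and the data at degrees $\geq N+1$ are untouched, so the extended system on $\bigcup_{d\geq N}Pic^{d}$ is again a Hecke eigensheaf. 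For uniqueness: if $E'$ is another such extension then $act^{*}(E')^{N+1}=act^{*}E_{\mathcal{F}}^{N+1}\cong\mathcal{F}\boxtimes(E')^{N}$, hence $\mathcal{F}\boxtimes(E')^{N}\cong\mathcal{F}\boxtimes E_{\mathcal{F}}^{N}$ and $(E')^{N}\cong E_{\mathcal{F}}^{N}$ by box-uniqueness; the refinement to uniqueness up to unique isomorphism follows since the only automorphisms of a rank-one local system over a connected scheme are scalars, and the eigensheaf isomorphism determines the scalar, exactly as in the uniqueness arguments of \rs{PFPCFT}.

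I expect the step $\mathcal{A}\cong\mathcal{F}$ to be the main obstacle: a priori $act^{*}E_{\mathcal{F}}^{N+1}$ restricts on the slices $\{x\}\times Pic^{N}$ merely to translates of $E_{\mathcal{F}}^{N+1}$, and one must squeeze out of the eigensheaf structure one degree higher the fact that its ``$X$-direction'' is $\mathcal{F}$ itself; the symmetrization over two points of $X$ is what forces this, and it is the same mechanism that makes the construction insensitive to the auxiliary point. I would also double-check the mild hypotheses --- properness of $X$ and of the $Pic^{d}$ over $k$, geometric connectedness of the fibers involved --- needed to apply the $\pi_{1}$ fiber-product formula.
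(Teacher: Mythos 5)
Your argument is correct for the proposition as stated, but it proves the key point by a different mechanism than the paper. The paper does not invoke any K\"unneth-type decomposition: it restricts the (desired) Hecke isomorphism along $i_x:Pic\rightarrow X\times Pic$ to obtain $act_x^*E_{\mathcal{F}}^{d+1}\cong\mathcal{F}_x\otimes E_{\mathcal{F}}^d$, takes the forced formula $E_{\mathcal{F}}^N:=\mathcal{F}_x^*\otimes act_x^*E_{\mathcal{F}}^{N+1}$ as the \emph{definition} (which is also where uniqueness comes from), and then verifies the eigensheaf property at degree $N$ by an explicit chain of isomorphisms using $act\circ(id_X\times act_x)=act_x\circ act$ together with the Hecke data in degrees $N+1$ and $N+2$. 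You instead decompose $act^*E_{\mathcal{F}}^{N+1}\cong\mathcal{A}\boxtimes\mathcal{B}$ via ``box-uniqueness'' ($\pi_1(A\times B)\cong\pi_1(A)\times\pi_1(B)$ for rank one), and identify $\mathcal{A}\cong\mathcal{F}$ by symmetrizing over two copies of $X$ using the degree-$(N+2)$ Hecke isomorphism; the eigensheaf structure at degree $N$ and the independence of the auxiliary point then come for free, and your uniqueness argument via box-uniqueness is also fine. What each approach buys: yours is softer and makes the new Hecke datum tautological, but it genuinely uses the K\"unneth formula for $\pi_1$, which in characteristic $p$ requires properness of one factor --- harmless here since $X$ and $Pic^d$ are proper and connected (so your caveat is satisfied), but it does \emph{not} carry over to the ramified setting of \rs{PMTR}, where $(X-D)$ and $Pic_D^d$ are non-proper and the paper reuses exactly this extension step unchanged; the paper's diagram chase avoids K\"unneth altogether and hence works verbatim there. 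Note also that both proofs consume the same input (the Hecke isomorphisms $act^*E^{d+1}\cong\mathcal{F}\boxtimes E^{d}$ for $d=N+1$, i.e.\ two degrees above $N$), and both use one-dimensionality of $\mathcal{F}$ (the paper via $End(\mathcal{F}_x)\cong\bar{\mathbb{Q}}_{\ell}$, you via the character-theoretic decomposition), so neither generalizes as stated to higher rank.
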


\begin{rem}
By the above \rss{LSPD} we have the base for the induction, a local system $E_{\mathcal{F}}$ on  $\underset{d\geq 2g-1}{\bigcup}Pic^d$ which satisfied the Hecke eigensheaf property. Hence, if we prove the induction step in above proposition, we get that the local system can be extended to $\underset{d\geq N}{\bigcup}Pic^d$ for any $N\in \mathbb{Z}$, hence it can be extended to the whole $Pic$ and we are done.
\end{rem}

\begin{proof}[Proof of proposition \re{Ind}]

To prove the proposition, we must introduce some notations. Recall that for any point $x\in X$ we have the map $act_x:Pic\rightarrow Pic$ defined in definition \re{PHIDEF} to be $act_x(\mathcal{L})=\mathcal{L}\otimes \mathcal{O}([x])$. We define the map $i_x:Pic\rightarrow X\times Pic$ by $i_x(\mathcal{L})=(x,\mathcal{L})$ and let $\pi_{1,2}$ denote the projections of $X\times Pic$ to the first and second factor respectively. We immediately get that $act_x=act\circ i_x:Pic\rightarrow Pic$, $\pi_1\circ i_x=C_x:Pic\rightarrow X$ the constant function with value $x$, and $id_{Pic}=\pi_2\circ i_x:Pic\rightarrow Pic$. Therefore, applying $i_x^*$ on $act^*E_{\mathcal{F}}^{d+1}\cong\pi_1^*\mathcal{F}\otimes \pi_2^*E_{\mathcal{F}}^d$ we obtain the isomorphism
\begin{equation}\label{Eq:zeroeq}
act_x^*E_{\mathcal{F}}^{d+1}\cong C_x^*\mathcal{F}\otimes E_{\mathcal{F}}^d\cong \mathcal{F}_x\otimes E_{\mathcal{F}}^d
\end{equation}

Recall the following well known statement from linear algebra:

\begin{prop}
Let $V$ be a vector space over some field $k$ and $V^*$ its dual space, then there exists a canonical isomorphism $V\otimes V^*\xrightarrow{\approx} End(V)$.
\end{prop}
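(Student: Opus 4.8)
The plan is to write down the canonical map explicitly, observe that it involves no choices (which is what ``canonical'' means here), and then verify bijectivity by a single basis computation in which the basis is used only for the verification, not in the definition of the map.

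First I would define $\Phi\colon V\otimes V^*\to End(V)$ on elementary tensors by $\Phi(v\otimes\varphi)(w)=\varphi(w)\,v$. The assignment $(v,\varphi)\mapsto\bigl(w\mapsto\varphi(w)\,v\bigr)$ is bilinear in the pair $(v,\varphi)$, so by the universal property of the tensor product it factors through $V\otimes V^*$ and yields a well-defined linear map $\Phi$. Since $\Phi$ refers to no basis whatsoever, it is canonical in the required sense; in fact one checks in the same way that $\Phi$ is natural in $V$, i.e.\ a morphism of functors.

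Next, to see that $\Phi$ is an isomorphism, I would assume $V$ finite dimensional --- the only case we use; in the application $V$ is a stalk of a rank one local system, so $\dim V=1$ --- and fix a basis $e_1,\dots,e_n$ of $V$ with dual basis $e_1^*,\dots,e_n^*$ of $V^*$. Then $\{e_i\otimes e_j^*\}_{1\le i,j\le n}$ is a basis of $V\otimes V^*$, and $\Phi(e_i\otimes e_j^*)$ is the endomorphism sending $e_j$ to $e_i$ and $e_k$ to $0$ for $k\neq j$, that is, the $(i,j)$ matrix unit. As the matrix units form a basis of $End(V)$, the map $\Phi$ carries a basis to a basis and is therefore an isomorphism.

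There is essentially no obstacle here. The only point worth flagging is that finite dimensionality of $V$ is genuinely needed: for infinite dimensional $V$ the map $\Phi$ is still injective but its image is only the subspace of finite rank endomorphisms. This hypothesis holds in every place the proposition is invoked.
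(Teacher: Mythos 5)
Your proof is correct, and it is the standard argument: the bilinear map $(v,\varphi)\mapsto(w\mapsto\varphi(w)v)$ factors through the tensor product by the universal property, giving a basis-free (hence canonical, indeed natural) map, and bijectivity is checked on a basis, where the elementary tensors $e_i\otimes e_j^*$ go to the matrix units. The paper itself offers no proof at all --- it simply invokes the proposition as a ``well known statement from linear algebra'' before applying it to the one-dimensional stalk $\mathcal{F}_x$ --- so there is no alternative argument to compare yours against. Your closing remark is a genuine improvement on the paper's formulation: as stated, with $V$ an arbitrary vector space, the claim is false, since for infinite-dimensional $V$ the canonical map is injective with image only the finite-rank endomorphisms; finite-dimensionality is needed, and it does hold in the paper's application, where $\dim V=1$.
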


Since $\mathcal{F}_x$ is one dimensional, $End(\mathcal{F}_x)$ is naturally isomorphic to $\bar{\mathbb{Q}}_{\ell}$. Hence, tensoring the last isomorphism with $\mathcal{F}_x^*$ we get:

\begin{equation}\label{Eq:fireq}
E_{\mathcal{F}}^d\cong\mathcal{F}_x^*\otimes act_x^*E_{\mathcal{F}}^{d+1}
\end{equation}

which is true for $d\geq N+1$. The only thing we used to get this isomorphism is the Hecke eigensheaf property. Therefore, if we want the Hecke eigensheaf property to hold also for $d=N$, this equation also must be true, so it defines $E_{\mathcal{F}}^N$ given that $E_{\mathcal{F}}^{N+1}$ is already defined, hence the uniqueness assumption of the proposition holds.

Concerning the existence, we clearly want to take equation \form{fireq} with $d=N$ and some point $x$ as the definition of $E_{\mathcal{F}}^N$.

To complete the proof of proposition \re{Ind} we must show that the Hecke eigensheaf property holds for this definition of $E_{\mathcal{F}}^N$. It is done using the Hecke eigensheaf property for $d=N+1$, we will write here a few isomorphisms and immediately explain them later:

\begin{center}
$act^*E_{\mathcal{F}}^{N+1}\cong act^*(\mathcal{F}_x^*\otimes act_x^*E_{\mathcal{F}}^{N+2})\cong
\mathcal{F}_x^*\otimes(id_X\times act_{x})^*\circ act^*(E_{\mathcal{F}}^{N+2})\cong
\mathcal{F}_x^*\otimes(id_X\times act_{x})^*(\mathcal{F}\boxtimes E_{\mathcal{F}}^{N+1})\cong
\mathcal{F}\boxtimes (\mathcal{F}_x^*\otimes act_x^*E_{\mathcal{F}}^{N+1})\cong \mathcal{F}\boxtimes E^N_{\mathcal{F}}$
\end{center}

The first and last isomorphisms come from equation \form{fireq} for $d=N+1$ and $d=N$, respectively. The second is just the statement that for $x\in X$: $act\circ (id_X\times act_x)=act_x\circ act$ which follows from the commutativity and associativity of the tensor product. The third is true because the Hecke eigensheaf property holds for $E^{N+1}_{\mathcal{F}}$ and the forth again follows from properties of tensor products.

Therefore we get the Hecke eigensheaf property for $E_{\mathcal{F}}^N$ and the proof of proposition \re{Ind}.
\end{proof}

We also get the required local system in our main theorem \re{MTU}, and it proves the existence assertion in the theorem.

In order to complete the proof of the unramified version of the main theorem, we still have to show that the local system $E_{\mathcal{F}}$ is unique.

Consider the composition:
\begin{center}
$\underset{d\,times}{\underbrace{X\times...\times X}}\xrightarrow{id\times...\times id\times \phi} \underset{d-1\,times}{\underbrace{X\times...\times X}}\times Pic^1\xrightarrow{id\times...\times id\times act}\underset{d-2\,times}{\underbrace{X\times...\times X}}\times Pic^2\rightarrow...\xrightarrow{act}Pic^d$
\end{center}
The composition of the maps is $X^d\xrightarrow{\pi^{(d)}}X^{(d)}\xrightarrow{\phi^{(d)}} Pic^d$. By the Hecke eigensheaf property $act^*E^N_{\mathcal{F}}\cong \mathcal{F}\boxtimes E^{N-1}_{\mathcal{F}}$ and since we require that $\phi^*E_{\mathcal{F}}=\mathcal{F}$ we obtain that under this composition the restriction of $E^d_{\mathcal{F}}$ is the $S_d$-equivariant sheaf $(\phi^{(d)}\circ\pi^{(d)})^*E^d_{\mathcal{F}}=\mathcal{F}^{\boxtimes d}$. Therefore, by the equivalence of proposition \re{EVLS} and its remark, $\phi^{(d)*}E^d_{\mathcal{F}}=\mathcal{F}^{(d)}$. But for $d\geq 2g-1$ the fibers of $\phi^{(d)}$ are simply connected, hence $E^d_{\mathcal{F}}$ is uniquely determined by $\mathcal{F}^{(d)}$ which is determined by $\mathcal{F}$. But from proposition \re{Ind}  $E_{\mathcal{F}}$ is uniquely determined by $E^d_{\mathcal{F}}$ for $d\geq 2g-1$. Therefore $E_{\mathcal{F}}$ is uniquely determined by $\mathcal{F}$ and we are done.

We can now prove proposition \re{GHEP}:

\begin{proof}[Proof of proposition \re{GHEP}]
We first prove the proposition for $m:Pic^1\times Pic^d\rightarrow Pic^{d+1}$ $d\geq 2g-1$ and then generalize it to $m:Pic^d\times Pic^{d'}\rightarrow Pic^{d+d'}$ for $d,d'\in \mathbb{Z}$.

Consider the commutative diagram:

\begin{center}
$\xymatrix{
X^{(d)}\times X\ar[r]^{h} \ar[d]^{\phi\times\phi^{(d)}}&
X^{(d+1)}\ar[d]^{\phi^{(d+1)}}
\\
Pic^1\times Pic^d\ar[r]^{m}&
Pic^{d+1}
}$
\end{center}
where $h$ is from remark \re{HEPX}. We have just seen that $E^d_{\mathcal{F}}$ is the unique sheaf on $Pic^d$ such that its restriction $\phi^{(d)*}E^d_{\mathcal{F}}\cong\mathcal{F}^{(d)}$. Therefore, to prove that $m^*E^{d+1}_{\mathcal{F}}\cong E^1_{\mathcal{F}}\boxtimes E^d_{\mathcal{F}}$ it is enough to prove that the restriction $(\phi\times\phi^{(d)})^*m^*E^{d+1}\cong \mathcal{F}\boxtimes \mathcal{F}^{(d)}$. Indeed, by the commutativity of the diagram the former is: $h^*\phi^{(d+1)*}E^{d+1}_{\mathcal{F}}\cong h^*\mathcal{F}^{(d+1)}\cong \mathcal{F}\boxtimes \mathcal{F}^{(d)}$, where the last equality uses remark \re{HEPX}. It concludes the case $m:Pic^1\times Pic^d\rightarrow Pic^{d+1}$ $d\in \mathbb{N}$.

Concerning the general case, it is enough to prove that if the proposition is true for $m:Pic^d\times Pic^{d'}\rightarrow Pic^{d+d'}$ then it is also true for $m:Pic^{d+n}\times Pic^{d'}\rightarrow Pic^{d+d'+n}$ and for $m:Pic^d\times Pic^{d'+n}\rightarrow Pic^{d+d'+n}$ where $n\in \mathbb{Z}$. We will prove the first result (the second proceeds in a similar manner).

If we apply equation \form{zeroeq} $n$ times we get:

\begin{equation}\label{Eq:seceq}
act_x^{n*}E_{\mathcal{F}}^{d+n} \cong \mathcal{F}^{\otimes n}_x\otimes E_{\mathcal{F}}^d
\end{equation}

Note that this equation is meaningful for any $n\in \mathbb{Z}$, for negative $n$ we take the inverses of $act_x$ and $\mathcal{F}_x$ with respect to composition and tensor product respectively. The equation is not only meaningful, but also true for negative $n$, since by application of $act^{-1*}_x$ to equation \form{fireq} we get $act_x^{-1*}E_{\mathcal{F}}^{d} \cong \mathcal{F}^*_x\otimes E_{\mathcal{F}}^{d+1}$ and in the case of negative $n$ we can apply it $-n$ times to get \form{seceq} for $n$.

We start from the isomorphism $m^*E^{d+d'}_{\mathcal{F}}\cong E^{d}_{\mathcal{F}}\boxtimes E^{d'}_{\mathcal{F}}$. Clearly $m\circ(act^{-n}_x\times id)=act^{-n}_x\circ m$, so apply $(act^{-n}_x\times id)^*$ to the last isomorphism to get $m^*act^{-n*}_xE^{d+d'}\cong  act_x^{-n*}E^{d}_{\mathcal{F}}\boxtimes E^{d'}_{\mathcal{F}}$ by equation \form{seceq} we get $m^*(\mathcal{F}^{-\otimes n}_x\otimes E^{d+d'+n})\cong  (\mathcal{F}^{-\otimes n}_x\otimes E^{d+n}_{\mathcal{F}})\boxtimes E^{d'}_{\mathcal{F}}$ but an inverse image of a constant sheaf is constant, therefore $\mathcal{F}^{-\otimes n}_x\otimes m^*E^{d+d'+n}\cong \mathcal{F}^{-\otimes n}_x\otimes (E^{d+n}_{\mathcal{F}}\boxtimes E^{d'}_{\mathcal{F}})$ tensoring with $\mathcal{F}^n_x$ we get the required $m^*E^{d+d'+n}\cong E^{d+n}_{\mathcal{F}}\boxtimes E^{d'}_{\mathcal{F}}$ and it completes the proof.
\end{proof}

\section{Proof of the main theorem - ramified case}\label{S:PMTR}

We shall now prove the ramified versions of the main theorem. Most of the proof remains exactly the same. To be more precise, in steps (1) and (3) of the proof of the unramified version, we don't use the projectiveness of $X$ at all. The only difference in the proof will therefore be in stage (2). We should distinguish here between the zero characteristic case and the case $Char(k)=p>0$.

\subsection{Surjectivity of $\phi^{(d)}$}

Surjectivity is needed in order to use the first homotopy exact sequence (theorem \re{FHES}). In the unramified case, $\phi^{(d)}:X^{(d)}\rightarrow Pic^d$ was surjective for $d$ large enough. The reason was that given an invertible sheaf $0<\mathcal{L}\in Pic$, the zero divisor of one of its global sections, is an effective divisor $D_0$ with $\phi^{(d)}(D_0)=\mathcal{L}$.

It is not so simple in the ramified case, since in proposition \re{RFIBER} we assumed that $\mathcal{L}\cong \mathcal{O}(D_0)$ where $D_0$ is effective. This assumption was necessary because we used the global section $1\in H^0(X,\mathcal{L})$, which exists only in the effective case. Hence, the method of the unramified case doesn't work in the case of ineffective $D_0$. Therefore we must find another way to prove surjectivity.

Let $(\mathcal{L},\psi)\in Pic_D^d$, it is represented by a (not necessarily effective) Weil divisor $D_0\in Div(X-D)$. We want to show that if $deg D_0$ is large enough, then there exists an effective divisor $D$-equivalent to $D_0$. In order to do this, it is enough to give a global section $f\in H^0(X,\mathcal{L})$ such that $f\equiv 1 \mod [D]$, as in proposition \re{RFIBER}. But to do this, it is enough to show that $1$ is in the image of the composition map $H^0(X,\mathcal{L})\rightarrow H^0(X,\mathcal{L}|_D)\rightarrow H^0(X,\mathcal{O}|_D)$, we will show that this composition is surjective, it is enough to show that the first map is surjective, since the second is an isomorphism.

Indeed, consider the short exact sequence of sheaves:
\begin{center}
$0\rightarrow \mathcal{L}(-[D])\rightarrow \mathcal{L}\rightarrow \mathcal{L}|_D\rightarrow 0$
\end{center}
it induces a long exact sequence of cohomology:
\begin{center}
$...\rightarrow H^0(X,\mathcal{L})\rightarrow H^0(X,\mathcal{L}|_D) \rightarrow H^1(X,\mathcal{L}(-[D]))\rightarrow...$
\end{center}
but by Riemann-Roch theorem, if we take $deg D_0> deg[D]-2g+2$, we get $H^1(X,\mathcal{L}(-[D]))=0$, therefore the map $H^0(X,\mathcal{L})\rightarrow H^0(X,\mathcal{L}|_D)$ is surjective as required.

\subsection{$Char(k)=0$}

After all this preliminary work, there remains almost nothing to do. In the unramified case, we only used the fact that the fiber of the map $X^{(d)}\rightarrow Pic^d$ is simply connected (it was the projective space) and the first exact sequence of homotopy \re{FHES}. If we consider its ramified analog $(X-D)^{(d)}\rightarrow Pic^d_D$, according to proposition \re{RFIBER} the fiber over $\mathcal{O}(D_0)\in Pic_D$ it is the affine space $\mathbb{A}^{l(D_0-[D])}$. Noting that $deg(D_0-[D])=d-deg[D]$ and using the Riemann-Roch theorem exactly as in the unramified case (but for $D_0-[D]$), gives us the required fiber in the case $d-deg[D]\geq2g-1$. In this case, the fiber is (by Riemann-Roch) $\mathbb{A}^{d-deg[D]+1-g}$. But by example \re{ASC}, the affine space is simply connected in the case $Char(k)=0$, hence we are done.

\subsection{$Char(k)=p>0$, tamely ramified case}

The difference from the $Char(k)=0$ case is that the affine space is no longer simply connected here. Given a representation $\rho:G_K\rightarrow \bar{\mathbb{Q}}_{\ell}^{\times}$ with ramification bounded by $[D]=\sum x_i$ (tamely ramified), then $\rho$ is trivial on the ramification groups $G^1_{x_i}$. By proposition \re{PHRG}, $\rho^{q-1}$ is trivial on $G^0_{x_i}$, i.e. it is unramified. Both $\rho$ and $\rho^{q-1}$ define representations of $\pi_1(X-D)$, hence also representations  $\eta,\eta^{q-1}$ of $\pi_1((X-D)^{(d)})$, as in the proof of the unramified case. Recall now proposition \re{UFIBER} and proposition \re{RFIBER} which give the fibrations in the rows of the commutative diagram:
\begin{center}
$\xymatrix{
 \mathbb{A}^{d-deg[D]+1-g}\ar[r] &
(X-D)^{(d)}\ar[r]^<<<<{\phi^{(d)}}\ar[d]&
Pic_D^d\ar[d]^{f}
\\
\mathbb{P}^{d-g}\ar[r]&
X^{(d)}\ar[r]&
Pic^d&
}$
\end{center}

Since $\rho^{q-1}$ is unramified, by unramified class field theory, it gives a representation of  $\pi_1(Pic^d)$, and in particular by composition with $f$, a representation $\mu$ of $\pi_1(Pic_D^d)$ such that $\mu\circ \phi^{(d)}=\eta^{q-1}$. We use here the first exact sequence of homotopy
(theorem \re{FHES}) to conclude that the restriction of $\eta^{q-1}$ to the fiber $\pi_1(\mathbb{A}^n)$ is trivial. But by example \re{APP} $\pi_1(\mathbb{A}^n)^{ab}$ is a pro-$p$ group and $q-1$ is prime to $p$. Hence also the restriction of $\eta$ to the fiber is trivial. Using the first exact sequence of homotopy again, $\eta$ induces a representation of $\pi_1(Pic_D^d)$, which is just what we want to prove.

\end{document}